\documentclass[12pt,reqno,a4paper]{amsart}
\addtolength{\oddsidemargin}{-.675in}
	\addtolength{\evensidemargin}{-.675in}
	\addtolength{\textwidth}{1.35in}

	\addtolength{\topmargin}{-.875in}
	\addtolength{\textheight}{1.35in}

\usepackage[dvipsnames, table]{xcolor}
\usepackage{amsthm}
\theoremstyle{plain}
\usepackage{amssymb}
\usepackage{marvosym}
\usepackage{bm}
\usepackage{mathrsfs}
\usepackage{enumerate}  
\usepackage{mathtools}
\usepackage{tikz-cd}
\usepackage{tikz}
\usepackage{graphicx}
\usepackage{float}
\usepackage[titletoc,title]{appendix}
\usepackage{marginnote}
\usepackage{todonotes}
\usepackage{etoolbox}
\usepackage[all]{xy}
\makeatletter
\patchcmd{\Ginclude@eps}{"#1"}{#1}{}{}
\makeatother
\usepackage[outdir=./]{epstopdf}
\usepackage[numbers]{natbib}
\setlength{\bibsep}{0.5pt}
\usepackage[utf8]{inputenc}
\usepackage[english]{babel}
\usepackage{changepage}
\usepackage{makecell}
\usepackage{enumitem}

\usepackage[colorlinks,citecolor=blue]{hyperref}


\definecolor{lightblue}{HTML}{1F88CD}
\definecolor{lightgrey}{HTML}{727272}
\definecolor{lightblue2}{HTML}{009EC1}
\definecolor{mypink}{HTML}{FD00B0}
\definecolor{lightred}{HTML}{ff4d4d}


\usepackage{hyperref}
\hypersetup{
	colorlinks=true,
    linkcolor={OliveGreen},
    citecolor={lightred},
	urlcolor={black}
}


\newtheorem*{theorem*}{Theorem}
\newtheorem{theorem}{Theorem}[section]

\newtheorem{lemma}[theorem]{Lemma}
\newtheorem{conjecture}[theorem]{Conjecture}

\newtheorem{proposition}[theorem]{Proposition}
\theoremstyle{definition}

\theoremstyle{definition}
\newtheorem{definition}[theorem]{Definition}
\theoremstyle{definition}
\newtheorem{remark}[theorem]{Remark}
\theoremstyle{definition}

\theoremstyle{definition}

\theoremstyle{definition}

\theoremstyle{definition}
\newtheorem{question}[theorem]{Question}
\theoremstyle{definition}

\theoremstyle{definition}
\newtheorem{question!}[theorem]{Question!}
\theoremstyle{definition}


\makeatletter
\newcommand*\sbt{\mathpalette\sbt@{.75}}
\newcommand*\sbt@[2]{\mathbin{\vcenter{\hbox{\scalebox{#2}{$\m@th#1\bullet$}}}}}
\makeatother


\newcommand{\ra}{\rightarrow}
\newcommand{\xra}{\xrightarrow}

\newcommand{\sst}{\subset}

\let\emptyset\varnothing


\newcommand{\bR}{\bm{\mathrm{R}}}
\newcommand{\bL}{\bm{\mathrm{L}}}


\newcommand{\D}{\mathrm{D}}


\newcommand{\CC}{\mathbb{C}}
\newcommand{\PP}{\mathbb{P}}



\newcommand{\ch}{\mathrm{ch}}

\newcommand{\pr}{\mathrm{pr}}

\renewcommand{\Re}{\operatorname{Re}}
\renewcommand{\Im}{\operatorname{Im}}

\DeclareMathOperator{\im}{im}

\DeclareMathOperator{\rk}{rk}

\DeclareMathOperator{\Coh}{\mathrm{Coh}}

\DeclareMathOperator{\Ext}{Ext}

\DeclareMathOperator{\Hom}{Hom}
\DeclareMathOperator{\RHom}{RHom}

\DeclareMathOperator{\ext}{ext}

\DeclareMathOperator{\Pic}{Pic}

\DeclareMathOperator{\cone}{cone}

\DeclareMathOperator{\Gr}{Gr}


\newcommand{\cC}{\mathcal{C}}
\newcommand{\cA}{\mathcal{A}}
\newcommand{\cE}{\mathcal{E}}
\newcommand{\cU}{\mathcal{U}}
\newcommand{\cH}{\mathcal{H}}

\newcommand{\cS}{\mathcal{S}}

\newcommand{\cI}{\mathcal{I}}

\newcommand{\cQ}{\mathcal{Q}}
\newcommand{\Ku}{\mathcal{K}u}

\newcommand{\cD}{\mathcal{D}}

\newcommand{\cN}{\mathcal{N}}
\newcommand{\cJ}{\mathcal{J}}
\newcommand{\cM}{\mathcal{M}}

\DeclareMathOperator{\cF}{\mathcal{F}}
\DeclareMathOperator{\cG}{\mathcal{G}}

\DeclareMathOperator{\oh}{\mathcal{O}}


\usetikzlibrary{decorations.pathmorphing}
\begin{document}

\title[]{Conics on Gushel--Mukai fourfolds, EPW sextics and Bridgeland moduli spaces}

\subjclass[2010]{Primary 14F05; secondary 14J45, 14D20, 14D23}
\keywords{Bridgeland moduli spaces, Kuznetsov components, Gushel--Mukai fourfolds, hyperkähler variety}

\address{Beijing International Center for Mathematical Research, Peking University, Beijing, China}
\email{hfguo@pku.edu.cn}

\address{Institute for Advanced Study in Mathematics, Zhejiang University, Hangzhou, Zhejiang Province 310030, P. R. China}
\address{College of Mathematics, Sichuan University, Chengdu, Sichuan Province 610064, P. R. China}
\email{zhiyuliu@stu.scu.edu.cn, jasonlzy0617@gmail.com}
\urladdr{sites.google.com/view/zhiyuliu}

\address{Max Planck Institute for Mathematics, Vivatsgasse 7, 53111 Bonn, Germany}
\address{Institut de Mathématiqes de Toulouse, UMR 5219, Université de Toulouse, Université Paul Sabatier, 118 route de
Narbonne, 31062 Toulouse Cedex 9, France}
\email{shizhuozhang@mpim-bonn.mpg.de,shizhuo.zhang@math.univ-toulouse.fr}

\author{Hanfei Guo, Zhiyu Liu, Shizhuo Zhang}
\address{}
\email{}

\begin{abstract}
We identify the double dual EPW sextic $\widetilde{Y}_{A^{\perp}}$ and the double EPW sextic $\widetilde{Y}_A$, associated with a very general Gushel--Mukai fourfold $X$, with the Bridgeland moduli spaces of stable objects of character $\Lambda_1$ and $\Lambda_2$ in the Kuznetsov component $\Ku(X)$. This provides an affirmative answer to a question of Perry--Pertusi--Zhao. As an  application, we prove a conjecture of Kuznetsov--Perry for very general Gushel--Mukai fourfolds. 
\end{abstract}

\maketitle

\setcounter{tocdepth}{1}








\section{Introduction}
\subsection{Hyperkähler varieties as Bridgeland moduli spaces for Kuznetsov components of Fano fourfolds}
Compact hyperkähler varieties are higher-dimensional analogues of K3 surfaces, which are important building blocks of algebraic geometry. However, constructing a compact hyperkähler variety is involved so that only a few of examples are known. The major examples are moduli spaces of stable sheaves on a K3 surface, by the work  \cite{beauville1983varietes}, 
\cite{mukai1984moduli},
\cite{o1995weight}, \cite{yoshioka1999irreducibility}, \cite{yoshioka2001moduli}
and many others.

To produce more examples of hyperkähler varieties, one could consider the moduli spaces of stable objects on a non-commutative K3 surface, which we now briefly explain. In \cite{kuznetsov2004derived}, Kuznetsov constructs a  semi-orthogonal decomposition of the derived category of a cubic fourfold $X$
$$\D^b(X)=\langle\Ku(X),\mathcal{O}_X,\mathcal{O}_X(H),\mathcal{O}_X(2H)\rangle,$$
where $\oh_X(H)$ is the ample line bundle  $\oh_{\mathbb{P}^5}(1)|_X.$ Kuznetsov observes that the non-trivial semi-orthogonal component $\Ku(X)$ is a K3 category in the sense that it has the same Serre functor and Hochschild cohomology as the derived category of a K3 surface. On the other hand, it has now been expected that the Kuznetsov component of a smooth Fano variety encodes essential birational geometric information. In \cite{bayer2017stability}, the authors construct stability conditions on the Kuznetsov component of a series of Fano varieties, including~$\Ku(X)$. In particular, one could construct Bridgeland moduli spaces of stable objects in $\Ku(X)$ with respect to the stability conditions. Under certain circumstances, these moduli spaces provide new examples of hyperkähler varieties.
In the present article, we focus on the case of a Gushel--Mukai(GM) fourfold $X$, which is a degree $10$ and index two Fano variety. Let $V_5$ 
be a complex vector space of dimension five. A general GM fourfold is defined by a smooth transverse intersection of $\mathrm{Gr}(2,V_5)$ with a linear section $\mathbb{P}^8$ and a quadric section $Q$ in $\mathbb{P}^9$ after the \text{Plücker} embedding
$$X:=\mathrm{Gr}(2,V_5)\cap \mathbb{P}^8\cap Q.$$

By the work \cite{debarre2015special}, \cite{debarre2015gushel}, \cite{kuznetsov2018derived}, \cite{debarre2019gushel} and many others, it is shown that GM fourfolds share many similarities with cubic fourfolds. For example, a GM fourfold $X$ also admits a semi-orthogonal decomposition
$$\D^b(X)=\langle\Ku(X),\mathcal{O}_X,\mathcal{U}^\vee,\mathcal{O}_X(H),\mathcal{U}^\vee(H)\rangle,$$
where $\mathcal{U}$ is the pull back of the tautological sub-bundle on $\mathrm{Gr}(2,V_5)$ and $\oh_X(H)$ is the restriction of the \text{Plücker} line bundle $\oh_{\mathbb{P}^9}(1)$. Furthermore,   $\Ku(X)$ is also a K3 category. In particular, there is a rank two lattice inside the numerical Grothendieck group $\mathcal{N}(\Ku(X))$ generated by 
$$\Lambda_1=-2+(H^2-\Sigma')-\frac{1}{2}P,\quad \Lambda_2=-4+2H-\frac{5}{3}L,$$ 
where $\Sigma'$ is the class of a degree $6$ surface. In \cite{perry2019stability}, the authors construct stability conditions on $\Ku(X)$ and they show that for a non-zero primitive Mukai vector $v$ and a generic stability condition $\sigma$, if the moduli space $\mathcal{M}_{\sigma}(v)$ is non-empty, it is a smooth projective hyperkähler variety of dimension $(v,v)+2$. In particular, they prove that if~$X$ is very general, $\mathcal{M}_{\sigma}(\Ku(X),\Lambda_1)$ is either isomorphic to the double dual EPW sextic~$\widetilde{Y}_{A^{\perp}}$ or the double EPW sextic $\widetilde{Y}_A$, where $A$ is the Lagrangian data associated with $X$. Furthermore, as observed in \cite[Section 5.4.1]{perry2019stability}, they expect that there exist two isomorphisms,  $\mathcal{M}_{\sigma}(\Ku(X),\Lambda_1)\cong\widetilde{Y}_{A^{\perp}}$ and $\mathcal{M}_{\sigma}(\Ku(X),\Lambda_2)\cong\widetilde{Y}_A$. 

\subsection{Main Results}
The first main result of our article answers the question of Perry--Pertusi--Zhao.

\begin{theorem}
\label{main_theorem_1}
Let $X$ be a very general GM fourfold. Then, for a generic stability condition $\sigma$ on $\Ku(X)$, we have
\begin{enumerate}
    \item $\mathcal{M}_{\sigma}(\Ku(X),\Lambda_1)\cong\widetilde{Y}_{A^{\perp}}$. 
    \item $\mathcal{M}_{\sigma}(\Ku(X),\Lambda_2)\cong\widetilde{Y}_A$. 
    \item There is an involutive auto-equivalence on $\Ku(X)$ and the induced involution on $\mathcal{M}_{\sigma}(\Ku(X),\Lambda_1)$ coincides with the natural involution on $\widetilde{Y}_{A^{\perp}}$.
\end{enumerate}
\end{theorem}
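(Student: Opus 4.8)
The plan is to realize all three statements through the geometry of conics on $X$, using the projection functor $\pr\colon\D^b(X)\to\Ku(X)$, left adjoint to the inclusion.

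\emph{Part (1).} For a conic $C\subset X$ with ideal sheaf $\mathcal{I}_C$, I would consider the object $\pr(\mathcal{I}_C)\in\Ku(X)$; a Chern-character computation shows that its class in $\mathcal{N}(\Ku(X))$ equals $\Lambda_1$. The first technical step is to prove that $\pr(\mathcal{I}_C)$ is $\sigma$-stable in the chamber used by Perry-Pertusi-Zhao: I would describe the cohomology objects of $\pr(\mathcal{I}_C)$ in the relevant heart (a two-term complex whose terms one identifies explicitly), verify simplicity, and exclude destabilizing sub- and quotient objects, following the template of the Fano variety of lines on a cubic fourfold. The universal conic then produces a morphism $\phi$ from the Hilbert scheme of conics $F(X)$ to $\mathcal{M}_\sigma(\Ku(X),\Lambda_1)$. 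Next I would invoke the classical theory of conics on GM fourfolds: up to replacing $F(X)$ by a suitable smooth model, there is a surjection from $F(X)$ onto the double dual EPW sextic $\widetilde{Y}_{A^{\perp}}$ with general fibre $\mathbb{P}^1$, and $\phi$ is constant along these fibres, so it descends to a morphism $\bar\phi\colon\widetilde{Y}_{A^{\perp}}\to\mathcal{M}_\sigma(\Ku(X),\Lambda_1)$. As $X$ is very general, both sides are irreducible hyperkähler fourfolds of Picard rank one; showing that $\bar\phi$ is injective — i.e. that conics over distinct points of $\widetilde{Y}_{A^{\perp}}$ have non-isomorphic projections — makes $\bar\phi$ a bijective morphism of smooth projective varieties, hence an isomorphism. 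Finally, the limiting alternative of Perry-Pertusi-Zhao is settled in favour of $\widetilde{Y}_{A^{\perp}}$ because the EPW datum entering the construction is $A^{\perp}$ and not $A$; equivalently, for very general $A$ there is no non-constant morphism $\widetilde{Y}_{A^{\perp}}\to\widetilde{Y}_A$.

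\emph{Part (2).} I would establish this in one of two ways. Directly, repeat the argument of Part (1) with a different natural family of sheaves whose projections carry the class $\Lambda_2$ — for instance twisted ideal sheaves of conics or restrictions of $\mathcal{U}^{\vee}$ — now parametrized by the geometry attached to $A$ rather than $A^{\perp}$, and conclude $\mathcal{M}_\sigma(\Ku(X),\Lambda_2)\cong\widetilde{Y}_A$ by the same hyperkähler-rigidity argument. Alternatively, apply the Kuznetsov-Perry equivalence $\Ku(X)\simeq\Ku(X')$ for a period-dual GM fourfold $X'$ with Lagrangian $A^{\perp}$ (which exists since $A$, hence $A^{\perp}$, is very general): this equivalence sends the pair $(\Lambda_1,\Lambda_2)$ for $X$ to $(\Lambda_2,\Lambda_1)$ for $X'$ (the swap being forced by Part (1) together with $\widetilde{Y}_A\not\cong\widetilde{Y}_{A^{\perp}}$), so Part (1) applied to $X'$ and $(A^{\perp})^{\perp}=A$ gives $\mathcal{M}_\sigma(\Ku(X),\Lambda_2)\cong\widetilde{Y}_{A}$.

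\emph{Part (3).} The double cover $\widetilde{Y}_{A^{\perp}}\to Y_{A^{\perp}}$ carries its covering involution $\iota$, which is anti-symplectic. A natural candidate for the required auto-equivalence $\tau$ is the duality involution of $\Ku(X)$: the auto-equivalence obtained from the derived-dual functor $E\mapsto E^{\vee}$ corrected by suitable mutations and a twist by $\mathcal{O}_X(H)$ so as to preserve $\Ku(X)$. It is involutive up to shift and acts as $-1$ on $H^{2,0}(\Ku(X))$, so the induced map on $\mathcal{M}_\sigma(\Ku(X),\Lambda_1)$ is anti-symplectic, matching $\iota$. It then remains to check that $\tau$ fixes $\Lambda_1\in\mathcal{N}(\Ku(X))$ and that the induced involution is precisely $\iota$: concretely, for conics $C,C'$ lying over conjugate points of $\widetilde{Y}_{A^{\perp}}$ one needs a functorial isomorphism $\tau(\pr(\mathcal{I}_C))\cong\pr(\mathcal{I}_{C'})$. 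I expect this to be the main obstacle — pinning down $\tau$ explicitly and identifying the induced involution — and anticipate that it reduces to a liaison-type relation between the two conics over conjugate points inside a suitable section of $X$, tracked carefully through the distinguished triangles defining $\pr$ and $\tau$.
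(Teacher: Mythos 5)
Your proposal follows essentially the same route as the paper: project conic-related objects to $\Ku(X)$, prove their stability, show the induced morphism $F_g(X)\to\mathcal{M}_\sigma(\Ku(X),\Lambda_1)$ factors through and identifies with the Iliev--Manivel contraction onto $\widetilde{Y}_{A^\perp}$, deduce (2) from the period-dual equivalence together with O'Grady's $\widetilde{Y}_A\not\cong\widetilde{Y}_{A^\perp}$, and realize the covering involution by the dualization-plus-mutation auto-equivalence via a liaison of conjugate conics (in the paper, $C\cup C'\in|-K_\Sigma|$ inside the degree-$4$ surface $\Sigma=\Gr(2,V_4)\cap X$). The only notable technical divergence is that the paper establishes stability not by a wall-crossing analysis but by a uniform numerical criterion ($\chi(E,E)=-2$ and $\ext^1(E,E)\le 9$ forces stability for every stability condition, via the weak Mukai lemma and the rank-two numerical Grothendieck group of a very general $X$), which is where the very-generality hypothesis actually enters.
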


\begin{remark} \label{general_rmk}
Here \emph{very general} means \emph{general non-Hodge-special}, see Definition \ref{hodge_special}. We only use this assumption in Theorem~\ref{pr_stable_thm} to prove the stability of projection objects. Once Theorem~\ref{pr_stable_thm} is known for general GM fourfold and generic stability conditions, Theorem~\ref{main_theorem_1} can be generalized to the case of general GM fourfolds. 
\end{remark}

We approach Theorem~\ref{main_theorem_1} by projecting objects related to conics to the Kuznetsov component. The Hilbert scheme of conics $F_g(X)$ of a general GM fourfold $X$ is a smooth projective variety of dimension 5. By \cite{iliev2011fano}, there exists a morphism 
\[f: F_{g}(X)\longrightarrow\widetilde{Y}_{A^\perp}\]
and this morphism is an essential $\mathbb{P}^1$-fibration in that $f$ contracts a $\mathbb{P}^1$-family of generic conics, while taking two special types of conics to two different points. Starting with a twisted structure sheaf $\oh_C(H)$ of a conic $C\subset X$, we show that the projection functor to~$\Ku(X)$ produces an essential $\mathbb{P}^1$-fibration over $\mathcal{M}_{\sigma}(\Ku(X),\Lambda_1)$. More precisely, if $X$ is a very general GM fourfold, we prove that the morphism $p\colon F_g(X)\rightarrow\mathcal{M}_{\sigma}(\Ku(X),\Lambda_1)$ induced by the projection functor coincides with the morphism $f$ constructed in \cite{iliev2011fano}. As a result, we prove that $\mathcal{M}_{\sigma}(\Ku(X),\Lambda_1)\cong\widetilde{Y}_{A^{\perp}}$. 

In \cite{kuznetsov2019categorical}, the authors study GM varieties of arbitrary dimension and propose the following conjecture.
\begin{conjecture}[{\cite[Conjecture 1.7]{kuznetsov2019categorical}}] \label{conjecture_KP2019}
If $X$ and $X'$ are GM varieties of the same dimension such that there exists an equivalence $\Ku(X)\simeq\Ku(X')$, then $X$ and $X'$ are birationally equivalent. 
\end{conjecture}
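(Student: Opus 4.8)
We establish Conjecture~\ref{conjecture_KP2019} when $X$ and $X'$ are very general GM fourfolds; the strategy is to recover the Lagrangian data of $X$ and $X'$ from their Kuznetsov components up to the duality $A\leftrightarrow A^{\perp}$, and then to quote the known birationality of generalized partners and generalized duals. So let $\Phi\colon\Ku(X)\to\Ku(X')$ be an exact equivalence. It induces an isometry of numerical Grothendieck groups preserving the algebraic/transcendental decomposition of the Mukai lattices; since $X$ is non-Hodge-special, $\mathcal{N}(\Ku(X))=\langle\Lambda_1,\Lambda_2\rangle$ has rank two, and hence so does $\mathcal{N}(\Ku(X'))=\langle\Lambda_1',\Lambda_2'\rangle$, so $\Phi$ restricts to a lattice isometry $\langle\Lambda_1,\Lambda_2\rangle\cong\langle\Lambda_1',\Lambda_2'\rangle$. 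Moreover, $\Phi$ commutes with Serre functors, so it carries a Serre-invariant stability condition $\sigma$ on $\Ku(X)$ to a Serre-invariant stability condition on $\Ku(X')$; by the uniqueness of Serre-invariant stability conditions up to the $\widetilde{\mathrm{GL}}^{+}(2,\mathbb{R})$-action \cite{perry2019stability}, the latter lies in the orbit of the generic stability condition $\sigma'$ of Theorem~\ref{main_theorem_1}. Transporting moduli spaces along $\Phi$ then yields isomorphisms
\[
\mathcal{M}_{\sigma}(\Ku(X),\Lambda_i)\ \cong\ \mathcal{M}_{\sigma'}(\Ku(X'),\Phi_*\Lambda_i),\qquad i=1,2.
\]

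Next I would identify both sides. By Theorem~\ref{main_theorem_1} the left-hand side is $\widetilde{Y}_{A^{\perp}}$ for $i=1$ and $\widetilde{Y}_{A}$ for $i=2$. On the right-hand side, each $\Phi_*\Lambda_i$ is a primitive class of square $2$ in $\langle\Lambda_1',\Lambda_2'\rangle$ with nonempty, hence four-dimensional hyperkähler, moduli space. After composing $\Phi$ with auto-equivalences of $\Ku(X')$ to normalize the isometry inside the (small) orthogonal group of this rank-two lattice --- or, alternatively, by directly classifying the hyperkähler fourfolds arising as $\mathcal{M}_{\sigma'}(\Ku(X'),v)$ for $v$ primitive of square $2$ --- one reduces to $\Phi_*\Lambda_i\in\{\pm\Lambda_1',\pm\Lambda_2'\}$, and Theorem~\ref{main_theorem_1} applied to $X'$ then identifies $\mathcal{M}_{\sigma'}(\Ku(X'),\Phi_*\Lambda_i)$ with $\widetilde{Y}_{A'^{\perp}}$ or $\widetilde{Y}_{A'}$, the two values of $i$ producing the two distinct double EPW sextics. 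Hence either $\widetilde{Y}_{A^{\perp}}\cong\widetilde{Y}_{A'^{\perp}}$ (and $\widetilde{Y}_{A}\cong\widetilde{Y}_{A'}$), or $\widetilde{Y}_{A^{\perp}}\cong\widetilde{Y}_{A'}$ (and $\widetilde{Y}_{A}\cong\widetilde{Y}_{A'^{\perp}}$).

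In either case I would descend from double EPW sextics to Lagrangian data. For $X$ very general the double EPW sextics have Picard rank one, so any isomorphism between two of them preserves the primitive ample (EPW) class, hence the induced degree-two map to $\mathbb{P}^5$, hence the canonical covering involution --- the compatibility recorded categorically in Theorem~\ref{main_theorem_1}(3) --- and therefore descends to an isomorphism of the underlying EPW sextics. By O'Grady's results on EPW sextics, an isomorphism of smooth EPW sextics forces the corresponding Lagrangian subspaces to be $\mathrm{PGL}(V_6)$-conjugate; thus $A$ is conjugate to $A'$ or to $A'^{\perp}$, i.e.\ $X$ and $X'$ are generalized partners or generalized duals in the sense of Debarre--Kuznetsov. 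It is known that generalized partners and generalized duals among GM fourfolds are birationally equivalent \cite{debarre2015special,debarre2015gushel,kuznetsov2018derived}, so $X$ and $X'$ are birational, which proves Conjecture~\ref{conjecture_KP2019} for very general GM fourfolds.

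The crux is the passage from the lattice isometry $\Phi_*$ to a genuine comparison of the geometric moduli spaces together with the identification of which double EPW sextic occurs. This rests on three inputs: (i) uniqueness of Serre-invariant stability conditions up to the $\widetilde{\mathrm{GL}}^{+}(2,\mathbb{R})$-action, so that moduli spaces transport along $\Phi$; (ii) enough control over the auto-equivalence group of $\Ku(X')$ --- or over wall-crossing between primitive square-$2$ classes in the rank-two lattice --- to reduce to $\Phi_*\Lambda_i\in\{\pm\Lambda_1',\pm\Lambda_2'\}$; and (iii) the precise statement of Theorem~\ref{main_theorem_1}, especially part~(3), to guarantee that the resulting isomorphism of hyperkähler fourfolds is compatible with the EPW double-cover structure. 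The remaining steps --- O'Grady's reconstruction of $A$ from its EPW sextic, and the birationality of generalized partners and duals of GM fourfolds --- are classical.
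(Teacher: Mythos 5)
Your argument agrees with the paper's proof of Theorem~\ref{KP_conjecture} up to the midpoint: both start from the induced isometry of the rank-two lattices $\langle\Lambda_1,\Lambda_2\rangle\cong\langle\Lambda_1',\Lambda_2'\rangle$ and the resulting isomorphism of Bridgeland moduli spaces, identified with double (dual) EPW sextics by Theorem~\ref{main_theorem_1}. From there the routes genuinely diverge. The paper stays Hodge-theoretic: the isomorphism of hyperk\"ahler fourfolds gives a Hodge isometry $\langle\Lambda_1,\Lambda_2\rangle^{\perp}\cong\langle\Lambda_1',\Lambda_2'\rangle^{\perp}$, which by \cite[Proposition~4.14]{perry2019stability} is a Hodge isometry $H^4(X,\mathbb{Z})_0(1)\cong H^4(X',\mathbb{Z})_0(1)$, and Debarre--Kuznetsov's Torelli-type results then say $X,X'$ are period partners or duals. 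You instead reconstruct the Lagrangian data geometrically: Picard rank one forces the isomorphism of double EPW sextics to respect the degree-two polarization, hence to descend to the EPW sextics in $\mathbb{P}^5$, and O'Grady's injectivity of $A\mapsto Y_A$ modulo $\mathrm{PGL}(V_6)$ gives $A\sim A'$ or $A\sim A'^{\perp}$. Both then quote the same birationality of partners/duals. Your route buys independence from \cite[Proposition~4.14]{perry2019stability} at the cost of two extra geometric inputs (Picard rank one of the very general double EPW sextic, and O'Grady's reconstruction of $A$ from $Y_A$); the paper's route is shorter because the Hodge isometry of the orthogonal complements comes for free from the Mukai-lattice formalism.

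Two soft spots you should repair. First, your input~(i) --- uniqueness of Serre-invariant stability conditions up to the $\widetilde{\mathrm{GL}}^{+}(2,\mathbb{R})$-action --- is not available for $\Ku(X)$ of a GM fourfold: the Serre functor is $[2]$, so \emph{every} stability condition is Serre-invariant and the stability manifold is not a single orbit. Fortunately the step it supports does not need it: $\Phi$ tautologically identifies $\mathcal{M}_{\sigma}(\Ku(X),v)$ with $\mathcal{M}_{\Phi(\sigma)}(\Ku(X'),\Phi_*v)$, and by Theorem~\ref{pr_stable_thm} the relevant objects are stable for \emph{every} stability condition, so genericity of $\Phi(\sigma)$ is a non-issue. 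Second, the reduction $\Phi_*\Lambda_i\in\{\pm\Lambda_1',\pm\Lambda_2'\}$ requires neither composing with auto-equivalences nor classifying square-two moduli spaces: in the lattice $\mathrm{diag}(-2,-2)$ the only classes of self-intersection $-2$ are $\pm\Lambda_1',\pm\Lambda_2'$, so any isometry permutes them up to sign. With these adjustments your argument goes through for very general GM fourfolds, which is exactly the generality in which the paper establishes the conjecture.
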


In \cite{JLLZ2021gushelmukai}, we show the conjecture is true for general GM threefolds. In this article, we prove the following theorem.

\begin{theorem}
\label{main_theorem_2}
Let $X$ and $X'$ be very general GM fourfolds. If there is an equivalence $\Ku(X)\simeq\Ku(X')$, then $X$ and $X'$ are period partners or period duals. In particular, $X$ is birational to $X'$. 
\end{theorem}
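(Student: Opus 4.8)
The plan is to use Theorem~\ref{main_theorem_1} to upgrade the abstract equivalence $\Phi\colon\Ku(X)\xrightarrow{\sim}\Ku(X')$ to an isomorphism of double EPW sextics, and then to conclude by the Torelli-type theory of EPW sextics together with the known birational classification of Gushel--Mukai fourfolds.

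First I would analyse the action of $\Phi$ on numerical invariants. Being of Fourier--Mukai type, $\Phi$ induces an isometry $\Phi_*\colon\cN(\Ku(X))\xrightarrow{\sim}\cN(\Ku(X'))$ for the Mukai pairing. Since $X$ and $X'$ are very general, both lattices are the rank-two lattices spanned by $\{\Lambda_1,\Lambda_2\}$ and by $\{\Lambda_1',\Lambda_2'\}$ respectively; a short computation in these lattices shows that, up to sign, $\Lambda_1$ and $\Lambda_2$ are the only primitive classes of Mukai square $2$ carrying non-empty Bridgeland moduli spaces. As $\Phi_*$ is an isometry and $\Phi$ preserves non-emptiness of moduli, it follows that $\Phi_*\Lambda_1\in\{\pm\Lambda_1',\pm\Lambda_2'\}$. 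Next, after composing $\Phi$ with a shift if necessary and using that $\Phi$ intertwines the Serre functors of $\Ku(X)$ and $\Ku(X')$, the stability condition $\Phi_*\sigma$ is again Serre-invariant and hence lies in the same $\widetilde{\mathrm{GL}}{}^+_2(\RR)$-orbit as a generic stability condition $\sigma'$ on $\Ku(X')$; therefore $\Phi$ takes $\sigma$-(semi)stable objects and families to $\sigma'$-(semi)stable objects and families, and so induces an isomorphism
\[
\cM_\sigma(\Ku(X),\Lambda_1)\ \cong\ \cM_{\sigma'}(\Ku(X'),\Phi_*\Lambda_1)
\]
of Bridgeland moduli spaces (passing from a class to its negative does not change the moduli space, so the signs are irrelevant).

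Now apply Theorem~\ref{main_theorem_1} to both sides. Denoting by $A$ and $A'$ the Lagrangian data of $X$ and $X'$, the left-hand side is $\widetilde{Y}_{A^\perp}$, while the right-hand side is $\widetilde{Y}_{(A')^\perp}$ when $\Phi_*\Lambda_1=\pm\Lambda_1'$ and $\widetilde{Y}_{A'}$ when $\Phi_*\Lambda_1=\pm\Lambda_2'$. In either case $\widetilde{Y}_{A^\perp}$ is isomorphic, as a smooth projective hyperkähler fourfold of $\mathrm{K3}^{[2]}$-type, to $\widetilde{Y}_{(A')^\perp}$ or to $\widetilde{Y}_{A'}$. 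Since $X$ and $X'$ are very general, these fourfolds have Picard rank one, so any such isomorphism preserves the natural polarization and conjugates the covering involution (cf.\ part~(3) of Theorem~\ref{main_theorem_1}); by O'Grady's work on double EPW sextics it therefore descends to a projective isomorphism $Y_{A^\perp}\cong Y_{(A')^\perp}$, respectively $Y_{A^\perp}\cong Y_{A'}$, induced by an element of $\mathrm{PGL}(V_6)$. Hence $A$ is $\mathrm{PGL}(V_6)$-equivalent to $A'$, or else $A$ is equivalent to $(A')^\perp$ under some isomorphism $V_6\cong V_6^\vee$. By the dictionary between Lagrangian data and the periods of GM fourfolds (Debarre--Iliev--Manivel, Debarre--Kuznetsov), these two alternatives mean precisely that $X$ and $X'$ are period partners, respectively period duals. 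Finally, by Debarre--Kuznetsov's birationality theorem, period partners and period duals among GM fourfolds are always birationally equivalent, which yields the last assertion.

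The step I expect to be the main obstacle is the passage from ``$\widetilde{Y}_{A^\perp}\cong\widetilde{Y}_{(A')^\perp}$ (or $\widetilde{Y}_{A'}$) as abstract varieties'' to ``$A$ and $A'$ are equivalent up to $\mathrm{PGL}(V_6)$ and duality'': this relies on the fact that a double EPW sextic, together with its intrinsic polarization and anti-symplectic involution, reconstructs the projective EPW sextic inside $\PP(V_6)$ and hence the Lagrangian datum up to $\mathrm{PGL}(V_6)$ --- a Torelli-type input for EPW sextics --- and one must make sure the isomorphism of fourfolds is genuinely polarized (here Picard rank one does the work). A secondary technical point is normalising $\Phi$ so as to be compatible with generic stability conditions, for which Serre-invariance of the relevant stability conditions is the crucial input; and it is exactly the undetermined case $\Phi_*\Lambda_1=\pm\Lambda_1'$ versus $\pm\Lambda_2'$ that forces the conclusion ``period partners \emph{or} period duals'' rather than ``period partners''.
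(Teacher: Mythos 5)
Your proposal is correct in outline and reaches the stated conclusion, but its second half follows a genuinely different route from the paper. Both arguments begin the same way: the induced isometry of the rank-two lattice $\mathcal{N}(\Ku(X))\cong\langle\Lambda_1,\Lambda_2\rangle$ forces $\Phi_*\Lambda_1=\pm\Lambda_1'$ or $\pm\Lambda_2'$ (as you observe, $\pm\Lambda_1,\pm\Lambda_2$ are the only classes of Mukai square $2$, so the non-emptiness argument is not even needed), and one then upgrades $\Phi$ to an isomorphism $\mathcal{M}_\sigma(\Ku(X),\Lambda_1)\cong\mathcal{M}_{\sigma'}(\Ku(X'),\Lambda_i')$. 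From there the paper does not pass through the projective geometry of EPW sextics at all: it takes the induced Hodge isometry on the degree-two cohomology of the two hyperk\"ahler fourfolds, identifies it with a Hodge isometry $\langle\Lambda_1,\Lambda_2\rangle^{\perp}\cong\langle\Lambda_1',\Lambda_2'\rangle^{\perp}$, and then invokes \cite[Proposition 4.14]{perry2019stability} to convert this into a Hodge isometry $H^4(X,\mathbb{Z})_0(1)\cong H^4(X',\mathbb{Z})_0(1)$, which by \cite[Remark 5.28]{debarre2019gushel} is exactly the statement that $X$ and $X'$ are period partners or period duals. You instead apply Theorem~\ref{main_theorem_1} to both sides and reconstruct the Lagrangian data from the polarized double EPW sextics; this needs (a) that the abstract isomorphism of fourfolds be polarized, which your Picard-rank-one observation supplies, and (b) O'Grady-type reconstruction of $A$ from $Y_A$ for generic $A$ --- a Torelli-type input for EPW sextics that the paper's lattice-theoretic route avoids entirely. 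Your route is more geometric and makes the role of the two hyperk\"ahler fourfolds explicit; the paper's route is lighter on EPW-specific inputs.

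One caveat on a technical step. Your claim that $\Phi_*\sigma$ lies in the same $\widetilde{\mathrm{GL}}^+_2(\mathbb{R})$-orbit as a generic $\sigma'$ because it is Serre-invariant is vacuous here: the Serre functor of $\Ku(X)$ for a GM fourfold is $[2]$, so every stability condition is Serre-invariant and no orbit is singled out (the uniqueness-of-orbit phenomenon is a threefold statement). The paper sidesteps this by proving (Theorem~\ref{pr_stable_thm}) that the relevant objects are stable for \emph{every} stability condition, so that the moduli space is independent of the choice, and by using the universal family of conics to see that the induced bijection of moduli spaces is an actual morphism. You would need to substitute an argument of this kind for the Serre-invariance step; once that is done, the rest of your proof goes through.
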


We will review the definitions of period partners and period duals in Section \ref{geometry_GM_EPW}. Roughly speaking, $X$ and $X'$ have the same period point up to an involution of the period domain.

The idea is very similar to the proof for GM threefolds as in \cite[Theorem~10.1]{JLLZ2021gushelmukai}. The equivalence $\Phi:\Ku(X)\simeq\Ku(X')$ would induce an isomorphism from the moduli space $\mathcal{M}_{\sigma}(\Ku(X),\Lambda_2)$ to either $\mathcal{M}_{\sigma}(\Ku(X'),\Lambda'_2)$ or $\mathcal{M}_{\sigma}(\Ku(X'),\Lambda'_1)$. By Theorem~\ref{main_theorem_1}, the former case shows that $X$ and $X'$ are period partners while the latter shows that they are period dual. In both cases, $X$ is birationally equivalent to $X'$ by \cite[Corollary 4.16, Theorem 4.20]{debarre2015gushel} and \cite[Remark 5.28]{debarre2019gushel}.

\subsection{Related work}
\subsubsection{Hyperkähler varieties as Bridgeland moduli spaces for Kuznetsov components}
In \cite{li2018twisted}, the authors reconstruct the Fano variety of lines for any cubic fourfold and the LLSvS eightfolds for cubic fourfolds not containing a plane as the moduli spaces of stable objects on the Kuznetsov component with primitive Mukai vector $\lambda_1+\lambda_2$ and $2\lambda_1+\lambda_2$ respectively. In \cite{li2020elliptic}, the authors show that a symplectic resolution of the moduli space $\mathcal{M}_{\sigma}(2\lambda_1+2\lambda_2)$ is a hyperkähler variety, deformation equivalent to O'Grady $10$. 

\subsubsection{Birational categorical Torelli for GM varieties}
In \cite{JLLZ2021gushelmukai}, we show that the Kuznetsov component determines the birational isomorphic class for general GM threefolds while in the present article, we prove a similar statement for very general GM fourfolds. Since GM fivefolds and sixfolds are all rational (cf.~\cite[Prop 4.2]{debarre2015gushel}), the analogous statements automatically hold in these cases.

\subsection{Further questions}
\subsubsection{Structure of the moduli space $\mathcal{M}_{\sigma}(\Ku(X),\Lambda_2)$}
It would be interesting to know if the Bridgeland moduli space $\mathcal{M}_{\sigma}(\Ku(X),\Lambda_2)$ can be realized as a Gieseker moduli space on $X$. Inspired by our previous work \cite{JLLZ2021gushelmukai} for GM threefolds, we wonder if the moduli space $\mathcal{M}_{\sigma}(\Ku(X),\Lambda_2)$ is isomorphic to the moduli space $M_X(4-2H+\frac{1}{6}H^3)$ of semistable sheaves on $X$.

\subsubsection{Refined categorical Torelli for GM fourfolds}
The duality conjecture \cite[Theorem 1.6]{kuznetsov2019categorical} tells us that the Kuznetsov component of GM varieties cannot determine the isomorphism class. In \cite{JLLZ2021gushelmukai}, we prove what we called \emph{Refined categorical Torelli theorem} for GM threefolds, meaning that an extra assumption can be made on the equivalence $\Phi:\Ku(X)\simeq\Ku(X')$ of GM threefolds $X$ and $X'$ to deduce that $X\cong X'$. It is natural to see if similar statements can be proved for GM fourfolds, as asked in \cite[Question 6.5]{PS2022categorical}.

\subsection{Organization of the paper}
In Section~\ref{geometry_GM_EPW}, we review the basic terminologies of GM fourfolds and the associated hyperkähler varieties, double EPW sextics and double dual EPW sextics. In Section~\ref{semi-orthogonal_GM}, we introduce the semi-orthogonal decomposition of GM fourfolds and construct an involutive auto-equivalence on the Kuznetsov component. In Section~\ref{stability_condition_GM}, we briefly review the concepts of weak stability conditions on a general triangulated category and stability conditions on the Kuznetsov component $\Ku(X)$ for a GM fourfold $X$. Then we prove some properties of stability conditions on $\Ku(X)$ which will be used later.
In Section~\ref{classfication_conics_GM}, we introduce three types of conics on GM fourfolds. In Section~\ref{projection_objects}, we compute the projection objects for conics of each type. In Section \ref{stability_projection_conics}, we prove the stability of the projection objects of conics. In Section~\ref{moduli_spaces_construction}, we show that the morphism induced by the projection functor coincides with the classical map defined by Iliev--Manivel in \cite{iliev2011fano}, and as a consequence we prove Theorem~\ref{main_theorem_1}. In Section~\ref{KP_conjecture_GM}, using results in Section~\ref{moduli_spaces_construction}, we prove Theorem~\ref{main_theorem_2}.

\subsection{Notation and conventions}

\begin{itemize}
\item We work over $k=\mathbb{C}$.
\item The term K3 surface means a smooth projective K3 surface.
\item We denote the bounded derived category of coherent sheaves on a smooth variety~$X$ by $\D^b(X)$. The derived dual functor $R\mathcal{H}om_X(-,\oh_X )$ on $\D^b(X)$ is denoted by~$\mathbb{D}(-)$.
\item If $X\to Y$ is a morphism between varieties and $F\in \D^b(Y)$, then we often write~$F_X$ for the
pullback of $F$ to $X$. By abuse of notation, if $D$ is a divisor on $Y$, we often still denote its pullback by $D$.
\item We will use $V_i$ to denote a complex vector space of dimension $i$.
\item We use $\hom$ and $\ext^{i}$ to represent the dimension of the vector spaces $\Hom$ and~$\Ext^{i}$.
\item We denote the Hilbert scheme of conics on a variety $X$ by $F_g(X)$, following the notation in \cite{iliev2011fano}.
\item The symbol $\simeq$ denotes an equivalence of categories and a birational equivalence of varieties. The symbol $\cong$ denotes an isomorphism between varieties, complexes or functors.
\end{itemize}

\subsection*{Acknowledgements}
Firstly, it is our pleasure to thank Arend Bayer and Qizheng Yin for very useful discussions on the topics of this project. We would like to thank Sasha Kuznetsov, Kieran G. O’Grady, Alexander Perry, Laura Pertusi and Xiaolei Zhao for helpful comments. The first author would like to thank Guolei Zhong for his suggestions and companion. The third author thanks Tingyu Sun for constant support and encouragement. We also would like to thank the referee for the careful reading and for providing detailed comments. The third author is supported by the ERC Consolidator Grant WallCrossAG, no. 819864.


\section{Geometry of Gushel--Mukai fourfolds and the associated EPW sextics}\label{geometry_GM_EPW}

Let $X$ be an ordinary GM fourfold, which is defined by a smooth transverse intersection of $\mathrm{Gr}(2,V_5)$ with a linear section $\mathbb{P}^8$ and a quadric section $Q$ in $\mathbb{P}^9$ after the \text{Plücker} embedding
$$X:=\mathrm{Gr}(2,V_5)\cap\mathbb{P}^8\cap Q.$$
There is a natural embedding $\gamma_X: X\rightarrow\mathrm{Gr}(2,V_5)$, which is usually called the Gushel map. We define $\oh_X(H):=\oh_{\mathbb{P}^9}(1)|_X$ and $\mathcal{U}:=\gamma_X^{*}\mathcal{U}_{\mathrm{Gr}(2,V_5)}$,
where $\mathcal{U}_{\mathrm{Gr}(2,V_5)}$ is the tautological rank two sub-bundle of $\mathrm{Gr}(2,V_5)$.

We denote $\sigma_{i, j}\in H^{2(i+j)}(\mathrm{Gr}(2,V_5),\mathbb{Z})$ the Schubert cycles of $\mathrm{Gr}(2,V_5)$ for any integers $3\geq i\geq j\geq 0$. By \cite[Proposition 3.4]{debarre2019gushel}, the cohomology group $H^4(X,\mathbb{Z})$ is torsion free.
The image of the embedding 
$\gamma_X^*:H^4(\mathrm{Gr}(2,V_5),\mathbb{Z})\rightarrow H^{4}(X,\mathbb{Z})$ is a rank two sub-lattice generated by 
$\gamma_X^*(\sigma_1)^2$ and $\gamma_X^*(\sigma_2)$.

\begin{definition} \label{hodge_special}
An ordinary GM fourfold $X$ is called \emph{non-Hodge-special} if $$H^{2,2}(X)\cap H^4(X,\mathbb{Z})=\gamma_X^*H^4(\mathrm{Gr}(2,V_5),\mathbb{Z}).$$
It means $H^{2,2}(X)\cap H^4(X,\mathbb{Z})$ is a rank two integral lattice. $X$ is called \emph{Hodge-special} if the lattice $H^{2,2}(X)\cap H^4(X,\mathbb{Z})$ is of rank at least three.
\end{definition}

According to \cite{debarre2015special}, \cite{debarre2019gushel} and  \cite[Section 4.5]{debarre2020gushel}, there is a period map from the moduli stack of GM fourfolds to the period domain
$$\wp_{4}: \mathbf{M}_{4}^{\mathrm{GM}}\longrightarrow\mathscr{D}.$$
In particular, the locus of periods of the Hodge-special GM fourfolds constitute a countably union of hypersurfaces.


In the current paper, we will always assume $X$ to be \emph{very general} in the sense that it is general among the locus of non-Hodge-special GM fourfolds. For a comment on the general case, see Remark \ref{general_rmk}.

\subsection{EPW sextics and conics on Gushel--Mukai fourfolds}\label{epw_O'grady}
As a cubic fourfold admits an associated hyperkähler variety, which is called Fano variety of lines, a general GM fourfold also admits its associated hyperkähler variety. 

Here we briefly review the definition and some properties of EPW sextics introduced by Eisenbud, Popescu, and Walter, in particular their relations with GM varieties.

Let $X$ be an ordinary GM fourfold, following \cite{debarre2019gushel}, one can naturally associate a triple $(A(X),V_5(X),V_6(X))$ with $X$, which is called \emph{a Lagrangian data} of $X$. Here $V_6(X)$ is a six-dimensional vector space, $V_5(X)$ is a hyperplane in $V_6(X)$ and $A(X)\subset\bigwedge^{3}V_{6}(X)$ is Lagarangian with respect to the natural symplectic structure on $\bigwedge^{3}V_{6}(X)$ given by wedge product. When $X$ is clear, we will use the notation $(A,V_5,V_6)$.

Starting from a Lagrangian data, one can construct various varieties which are important to the geometry of GM fourfolds.
For any integer $l\geq 0$, we define closed subschemes
$$Y_{A}^{\geq l}:=\{[U_{1}]\in\mathbf{P}(V_{6})|\mathrm{dim}(A \cap(U_{1}\wedge\bigwedge^{2}V_{6}))\geq l\}\subset \mathbf{P}(V_{6}),$$
$$Y_{A^\perp}^{\geq l}:=\{[U_{5}]\in\mathbf{P}({V_{6}}^\vee)|\mathrm{dim}(A \cap\bigwedge^{3}U_{5})\geq l\}\subset\mathbf{P}({V_{6}}^\vee).$$
At the same time, we set $Y_{A}^{\ell}:=Y_{A}^{\geq\ell}\backslash Y_{A}^{\geq \ell+1}$ and $Y_{A^{\perp}}^{\ell}:=Y_{A^{\perp}}^{\geq \ell} \backslash Y_{A^{\perp}}^{\geq\ell+1}$.

If $X$ is general, we can assume that $A$ is also general. Then $Y_{A}:=Y_{A}^{\geq 1}\subset\mathbf{P}(V_{6})$
is a normal integral sextic hypersurface, which is called an EPW sextic. The fourfold~$Y_{A}$ is singular at the integral surface $Y_{A}^{\geq 2}$. In \cite[Section 1.2]{o2010double}, the author constructs a canonical double cover
$$\widetilde{Y}_A\rightarrow Y_A,$$
branched over the integral surface $Y_A^{\geq2}$, which is called the double EPW sextic. 
Since $A$ is general, $Y_A^{\geq3}=\emptyset$, $\widetilde{Y}_A$ is a smooth hyperkähler fourfold. The analogue statements also hold for $Y_{A^\perp}^{\geq l}$.

Many properties of a GM fourfold $X$ depend on $A(X)$, its even part of the corresponding Lagrangian subspace. Here we introduce two important notions called period partner and period dual.

\begin{definition}[{\cite[Definition 3.20]{debarre2015gushel}}]
Two GM fourfolds $X_1$ and $X_2$ are called \emph{period partners} if there exists an isomorphism $\phi: V_6(X_1)\cong V_6(X_2)$ such that we have $(\bigwedge^{3} \phi)(A(X_{1}))=A(X_{2})$. They are called \emph{period duals} if there exists an isomorphism $\phi: V_6(X_1)\cong V_6(X_2)^{\vee}$ such that $(\bigwedge^{3} \phi)(A(X_{1}))=A(X_{2})^{\perp}$.
\end{definition}

By definition, period partners are constructed by the same Lagrangian subspace $A$, but possibly with different hyperplanes of $V_6$. By  \cite[Theorem 5.1]{debarre2019gushel}, there is an identification between the primitive Hodge structure of a GM fourfold and the one of its associated double EPW sextic. As a corollary,  period partners share the same period point.


According to \cite[Theorem 1.1]{o2006dual}, for any $A\in\mathbb{L}\mathbb{G}(\wedge^3V_6)^{00}$( i.e.~the associated double cover of the EPW sextic and its dual $\widetilde{Y}_A$ and $\widetilde{Y}_{A^\perp}$ are both smooth), the period points of $\widetilde{Y}_A$ and $\widetilde{Y}_{A^\perp}$ differ by an involution $\Bar{r}$. In particular, $\Bar{r}$ is non-trivial, which implies that $\widetilde{Y}_A$ is not isomorphic to $\widetilde{Y}_{A^\perp}$ for general $A$.



In \cite{iliev2011fano}, the authors show that the double EPW sextic can be constructed from the Hilbert scheme of conics on a general GM fourfold. Here we give a short review of their construction. 

Denote by $F_g(X)$ the Hilbert scheme of conics lying on $X$. When $X$ is general, it is a smooth projective variety of dimension five. $F_g(X)$ admits a natural map to a sextic hypersurface $Y_{X}^\vee\subset \mathbb{P}^5$, over which $F_g(X)$ is essentially a fibration in projective lines. By Stein factorization, we get $$F_g(X)\xrightarrow{f} \widetilde{Y}_{X}^\vee\rightarrow Y_{X}^\vee.$$
It turns out $\widetilde{Y}_{X}^\vee$ is also a smooth fourfold and the morphism $f$ is birational to a $\mathbb{P}^1$-bundle. 

Recall that $Y_X\subset\mathbb{P}(I_X(2))\cong\mathbb{P}^5$. The quadrics containing $\Gr(2,V_5)$,  which are sections of $I_{\Gr(2,V_5)}(2)$, are called Pfaffian quadrics. The hyperplane of Pfaffian quadrics in~$\mathbb{P}(I_X(2))\cong\mathbb{P}^5$ is denoted by $H_p$. Then in the dual projective space, $H_p$ defines a point~$h_p$, which is called the \text{Plücker} point.

By \cite[Theorem 3.2]{iliev2011fano}, there are three types of conics on $X$ which are $\tau$-conics, $\rho$-conics and $\sigma$-conics. The locus of the last two types $F^{\rho}_g(X)$ and $F^{\sigma}_g(X)$ are isomorphic to a three-dimensional quadric $Q^3$ and the blow-up of $\mathbb{P}^4$ at a point, respectively. 

\begin{proposition}[{\cite{iliev2011fano}}] \label{IM11contraction}
The morphism $f: F_g(X)\to\widetilde{Y}_{X}^\vee$ is a birational $\mathbb{P}^1$-bundle in that
\begin{enumerate}
    \item $f(F^{\rho}_g(X))=p_1$ and $f(F^{\sigma}_g(X))=p_2$, where $p_1$, $p_2\in\widetilde{Y}_{X}^\vee$ are preimages of the \text{Plücker} point under the double cover $\widetilde{Y}_{X}^\vee\rightarrow Y_{X}^\vee$.
    
    \item $f(F^{\tau}_g(X))=\widetilde{Y}_{X}^\vee-\{p_1,p_2\}$ and 
    the restriction $f|_{F^{\tau}_g(X)}: F^{\tau}_g(X)\to\widetilde{Y}_{X}^\vee-\{p_1,p_2\}$ is a $\mathbb{P}^1$-bundle.
\end{enumerate}
\end{proposition}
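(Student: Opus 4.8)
The plan is to follow the approach of Iliev--Manivel in \cite{iliev2011fano}, which reconstructs the sextic $Y_X^\vee$ and its double cover via the \emph{conic-to-point} map and the classification of conics. First I would recall that every conic $C\subset X\subset\Gr(2,V_5)$ spans a plane $\langle C\rangle\subset\PP^8$, and by the theory of GM varieties this plane meets $\Gr(2,V_5)$ either in $C$ itself (a $\tau$-conic), or in a quadric surface $\Gr(2,V_4)\cap\PP^3$ cut down, or the conic lies on a $\sigma$-plane $\PP(U_3\wedge V_5/U_3)$ — the three types giving $\tau$-, $\rho$- and $\sigma$-conics. The key linear-algebraic input is the map sending $C$ to the hyperplane $V_5(C)\subset V_6$ (equivalently a point of $\PP(V_6^\vee)=\PP^5$) determined by the locus swept out by the lines parametrized by $C$; one shows this lands in $Y_{A^\perp}\subset\PP(V_6^\vee)$, giving the morphism $g:F_g(X)\to Y_X^\vee$, and then $f:F_g(X)\to\widetilde Y_X^\vee$ is its Stein factorization. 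That $\widetilde Y_X^\vee$ is smooth and identified with the double dual EPW sextic uses the genericity of $A$ (so $Y_{A^\perp}^{\geq 3}=\emptyset$).

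For part (1), I would compute $g$ on the loci $F^\rho_g(X)$ and $F^\sigma_g(X)$ directly. Both $\rho$- and $\sigma$-conics are distinguished by passing through (or being tangent to) the special behavior at the vertex point — concretely, the Gushel map $\gamma_X$ is ramified/degenerate there, and the associated hyperplane $V_5(C)$ is forced to be the distinguished hyperplane $V_5=V_5(X)$ itself, independent of the conic. Hence $g(F^\rho_g(X))$ and $g(F^\sigma_g(X))$ are both the single point $[V_5]\in Y_X^\vee$, the \text{Plücker} point. Passing to the double cover, the preimage of this point is $\{p_1,p_2\}$, and one checks the connected fibers of $f$ over $\rho$- versus $\sigma$-loci go to the two distinct sheets: since $F^\rho_g(X)\cong Q^3$ and $F^\sigma_g(X)\cong\mathrm{Bl}_o(\PP^4)$ are each irreducible of dimension $3<4=\dim\widetilde Y_X^\vee$, each must be contracted by $f$ to a single point, and these points are the two preimages $p_1\ne p_2$.

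For part (2), I would argue that over the open complement $Y_X^\vee\smallsetminus\{[V_5]\}$ — equivalently over $\widetilde Y_X^\vee\smallsetminus\{p_1,p_2\}$ — every conic in the fiber is a $\tau$-conic, so $F^\tau_g(X)=f^{-1}(\widetilde Y_X^\vee\smallsetminus\{p_1,p_2\})$. The fiber of $f$ over such a point $[U_5]\in Y_{A^\perp}^{\,1}$ is identified with $\PP(A\cap\bigwedge^3 U_5)\cong\PP^1$ by the EPW/Lagrangian correspondence (the defining condition $Y_{A^\perp}^{\geq 1}\smallsetminus Y_{A^\perp}^{\geq 2}$ says this intersection is exactly one-dimensional), and this family of $\PP^1$'s is a genuine $\PP^1$-bundle (not merely a fibration) because the incidence variety $\{(C,[U_5])\}$ is smooth over the base and the fibers have no jumping. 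The dimension count $5 = 4+1$ confirms genericity of the fiber dimension. The main obstacle is the last point: verifying that $f|_{F^\tau_g(X)}$ is a \emph{Zariski-locally trivial} $\PP^1$-bundle rather than just a conic bundle or a fibration with all fibers $\cong\PP^1$; this requires either producing a relative degree-one subbundle (a section, or a line bundle of relative degree $1$), or invoking smoothness of $F^\tau_g(X)$ together with the fact that a smooth proper morphism with all fibers $\PP^1$ over a smooth rational (hence with trivial Brauer obstruction on the relevant locus) base is a projective bundle. Since this is precisely the content of \cite[Theorem 3.2 and its corollaries]{iliev2011fano}, I would cite that for the bundle structure and restrict the new work to assembling the pieces above.
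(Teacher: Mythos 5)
The paper does not actually prove this proposition: it is imported verbatim from Iliev--Manivel \cite{iliev2011fano}, so the only internal material to compare against is the review surrounding the statement in Section \ref{geometry_GM_EPW} and the place where the mechanism is genuinely used, namely the proof of Proposition \ref{contracts_tau_conic_by_P1}. Your skeleton (define the conic-to-point map to $\PP(V_6^\vee)$, Stein factorize, treat the three types of conics separately, and ultimately cite \cite{iliev2011fano} for the bundle structure) is the right one, but several of the concrete steps you propose would fail.

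First, your explanation of part (1) via ``the vertex point'' where ``the Gushel map $\gamma_X$ is ramified/degenerate'' does not apply here: for an \emph{ordinary} GM fourfold $\gamma_X$ is a closed embedding and there is no vertex. The correct reason $\rho$- and $\sigma$-conics land over the Pl\"ucker point is that their spanning planes are contained in $\Gr(2,V_5)$, so the quadric datum attached to $C$ degenerates to the Pl\"ucker quadric. Your fallback dimension count is also wrong on two counts: $F^{\sigma}_g(X)\cong\mathrm{Bl}_o(\PP^4)$ has dimension $4$, not $3$, and in any case ``irreducible of dimension $<4$'' does not force $f$ to contract a locus to a point (a threefold can map onto a threefold). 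Second, in part (2) the fibre of $f$ over a point of $\widetilde{Y}_{X}^\vee$ lying above $[U_5]\in Y^{1}_{A^{\perp}}$ is \emph{not} $\PP(A\cap\bigwedge^3U_5)$: for $[U_5]\in Y^{1}_{A^{\perp}}$ that projective space is a single point. The actual $\PP^1$ is a ruling of planes in the unique singular quadric threefold $Q'$ of the pencil $\langle\Gr(2,V_4)\cap H,\ Q\cap\PP(\wedge^2V_4)\cap H\rangle$ containing $\langle C\rangle$; the two rulings of $Q'$ account for the two sheets of the double cover, which is precisely why the Stein factorization produces $\widetilde{Y}_{X}^\vee$. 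This is the description the paper itself recalls in the proof of Proposition \ref{contracts_tau_conic_by_P1}. Finally, the Brauer-group/rationality argument for Zariski-local triviality is unavailable, since the base is a hyperk\"ahler fourfold minus two points and is far from rational. As the proposition is cited rather than proved, the honest course is to quote \cite{iliev2011fano} outright; if you want to sketch the argument, the three points above need to be repaired.
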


Thus the natural holomorphic two-form on $F_g(X)$, which is induced by the generator of~$H^{3,1}(X)$, descends to $\widetilde{Y}_{X}^\vee$. This makes $\widetilde{Y}_{X}^\vee$ a hyperkähler fourfold. Indeed, 
$\widetilde{Y}_{X}^\vee\rightarrow Y_{X}^\vee$ is a double cover and 
the natural involution is anti-symplectic as in \cite[Proposition 4.17]{iliev2011fano}. This implies $Y_{X}^\vee$ is an EPW sextic and $\widetilde{Y}_{X}^\vee$ coincides with the double cover constructed by O'Grady in Section~\ref{epw_O'grady}. In the followings, we will use the notations $Y_{A}$ and $\widetilde{Y}_{A}$ uniformly to refer to an EPW sextic and its double cover.

\section{Semi-orthogonal decomposition and Kuznetsov components of Gushel--Mukai fourfolds}\label{semi-orthogonal_GM}
Let $X$ be a GM fourfold, the derived category $\D^b(X)$ admits a semi-orthogonal decomposition, given by \cite[Prop 2.3]{kuznetsov2018derived}
$$\D^b(X)=\langle\Ku(X), \oh_X,\mathcal{U}^{\vee},\oh_X(H),\mathcal{U}^{\vee}(H)\rangle.$$
By \cite[Prop 2.6]{kuznetsov2018derived}, the Serre functor of $\Ku(X)$ is $S_{\Ku(X)}\cong [2]$. 
In this case, we define the projection functor as 
$\pr_1\coloneqq \bL_{\oh_X}\bL_{\cU^{\vee}}\bL_{\oh_X(H)}\bL_{\cU^{\vee}(H)}$.

Since $\cU^{\vee}$ has rank two and $c_1(\cU^{\vee})=H$, we have an isomorphism $\cU^{\vee}\cong\cU(H)$. Then using Serre duality, there is an alternative semi-orthogonal decomposition 
$$\D^b(X)=\langle \oh_X(-H), \cU, \Ku(X), \oh_X,\cU^{\vee} \rangle.$$
Under this decomposition, we denote the projection functor by $\pr_2:=\mathbf{R}_{\mathcal{U}} \mathbf{R}_{\mathcal{O}_{X}(-H)}\bL_{\mathcal{O}_{X}}\bL_{\mathcal{U}^\vee}$.

We denote the Grothendieck group of $\Ku(X)$ by $K_0(\Ku(X))$ and $\chi$ is the Euler form over it. Its numerical Grothendieck group is defined as $\mathcal{N}(\Ku(X)):=K_0(\Ku(X))/\ker(\chi)$.

\begin{lemma}[{\cite[Proposition 2.25]{kuznetsov2018derived}}]
Let $X$ be a very general $GM$ fourfold, then  $\mathcal{N}(\Ku(X))\cong\mathbb{Z}^2$. Furthermore, it is generated by $\Lambda_1$ and $\Lambda_2$, where $\Lambda_1=-2+(H^2-\Sigma')-\frac{1}{2}P$ and $\Lambda_2=-4+2H-\frac{5}{3}L$. The Euler form $\chi(-,-)$ on $\langle\Lambda_1,\Lambda_2\rangle$ is in the form
\begin{equation}
\left[               
\begin{array}{cc}   
-2 & 0 \\  
0 & -2\\
\end{array}
\right] .
\end{equation}
\end{lemma}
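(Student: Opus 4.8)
The statement is Kuznetsov's computation of $\mathcal{N}(\Ku(X))$ for a very general GM fourfold, so the proof is essentially an organized Chern-character/Euler-form computation. The plan is to carry it out in three stages: first identify $K_0(\D^b(X))$ and the exceptional objects, second use the semi-orthogonal decomposition to isolate $K_0(\Ku(X))$ and pass to the numerical quotient, and third compute the Gram matrix explicitly.

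\emph{Step 1 (Grothendieck group of $X$ and its numerical quotient).} Since $X$ is smooth projective, $K_0(\D^b(X)) = K_0(X)$, and after tensoring with $\mathbb{Q}$ the Chern character identifies the numerical Grothendieck group $\mathcal{N}(X) = K_0(X)/\ker\chi$ with the image of $\ch$ inside $\bigoplus_i A^i(X)_{\num,\mathbb{Q}}$. For a very general (non-Hodge-special) GM fourfold, $H^{2,2}(X)\cap H^4(X,\mathbb{Z}) = \gamma_X^*H^4(\mathrm{Gr}(2,V_5),\mathbb{Z})$ by Definition~\ref{hodge_special}, which is rank two; together with the rank, the $H^2$ and $H^6$ parts, and $H^8$, this pins down $\mathcal{N}(X)$ as a finite-rank lattice with an explicit basis coming from $1, H, H^2, \gamma_X^*\sigma_2 = \Sigma', L$ (a line), $P$ (a point). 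I would record the intersection numbers on $X$ (degree $10$, and the pairings of $\gamma_X^*(\sigma_1)^2$ and $\gamma_X^*(\sigma_2)$ using the Schubert calculus on $\mathrm{Gr}(2,V_5)$ and $\deg X = 10$), which are needed to evaluate $\chi$.

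\emph{Step 2 (Splitting off $\Ku(X)$).} The semi-orthogonal decomposition $\D^b(X)=\langle\Ku(X), \oh_X,\mathcal{U}^{\vee},\oh_X(H),\mathcal{U}^{\vee}(H)\rangle$ gives an orthogonal (for $\chi$, on the numerical level, after taking the right mutation/admissible structure into account) direct sum decomposition $\mathcal{N}(X) = \mathcal{N}(\Ku(X)) \oplus \langle [\oh_X],[\mathcal{U}^\vee],[\oh_X(H)],[\mathcal{U}^\vee(H)]\rangle$ — more precisely $\mathcal{N}(\Ku(X))$ is the subgroup of classes $\chi$-orthogonal to the four exceptional classes on the appropriate side. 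So $\rk \mathcal{N}(\Ku(X)) = \rk\mathcal{N}(X) - 4 = 2$. Then $\Lambda_1$ and $\Lambda_2$ as written must be checked to (a) lie in $\mathcal{N}(\Ku(X))$, i.e. be semi-orthogonal to the four exceptional objects — this is a direct Euler-form computation using $\ch(\mathcal{U}^\vee)$, $\td_X$, and Hirzebruch–Riemann–Roch — and (b) generate it, i.e. form a basis of the rank-two lattice and not merely a finite-index sublattice; primitivity can be seen by reducing the coefficient vectors modulo a prime or by exhibiting classes pairing to $\pm 1$ appropriately, or simply by matching with the known description of $\mathcal{N}(\Ku(X))$ as the K3-type lattice $\mathbb{Z}^2$ with the stated form.

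\emph{Step 3 (Gram matrix).} Finally compute $\chi(\Lambda_i,\Lambda_j)$ for $i,j\in\{1,2\}$ by HRR on $X$: lift $\Lambda_1,\Lambda_2$ to actual objects of $\D^b(X)$ (or just use that $\chi$ on $\Ku(X)$ is the restriction of $\chi$ on $\D^b(X)$), expand the products of Chern characters, multiply by $\td_X$, and take degrees. Since $\Ku(X)$ is a K3 category its Serre functor is $[2]$, forcing $\chi$ to be symmetric and even, so the off-diagonal entries are equal and the result must be of the form $\begin{bmatrix}-2a & b\\ b & -2c\end{bmatrix}$; the computation then gives $a=c=1$ and $b=0$. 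The main obstacle is purely bookkeeping: getting $\ch(\mathcal{U}^\vee)$ and $\td_X$ correct and evaluating all the degree pairings on $X$ consistently (in particular the class $\Sigma'$ of the degree-$6$ surface and the relations $H^2 = \gamma_X^*(\sigma_1)^2$, $\Sigma' = \gamma_X^*(\sigma_2)$, $P$ the point class, $L$ the line class, with their mutual products), so the real work is assembling an internally consistent intersection table on $X$ and then the Euler-form evaluations are mechanical. There is no conceptual difficulty beyond invoking non-Hodge-specialness to guarantee $\rk\mathcal{N}(X)$ is as small as possible; everything else is Riemann–Roch.
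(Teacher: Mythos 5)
The paper does not prove this lemma at all: it is quoted verbatim from Kuznetsov--Perry (\cite[Proposition 2.25]{kuznetsov2018derived}), so there is no internal proof to compare against. Your outline is essentially the standard argument from that reference, and it is sound. The rank count works: for non-Hodge-special $X$ one gets $\rk\,\mathcal{N}(X)=1+1+2+1+1=6$, the four exceptional classes are independent (unitriangular Gram matrix), so the left orthogonal has rank $2$; and the Gram matrix entries do come out as claimed once one assembles the intersection table ($H^4=10P$, $H^3=10L$, $H\cdot\Sigma'=6L$, $H^2\cdot\Sigma'=6P$, $\Sigma'^2=4P$) and applies the $\chi$-formula recorded in the paper --- I checked $\chi(\Lambda_1,\Lambda_1)=\chi(\Lambda_2,\Lambda_2)=-2$ and $\chi(\Lambda_1,\Lambda_2)=0$ this way. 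Your remark that symmetry of $\chi$ on $\Ku(X)$ is forced by the Serre functor being $[2]$ is the right way to avoid computing both off-diagonal entries.

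The one place where your plan is thinner than it should be is the saturation step: showing $\langle\Lambda_1,\Lambda_2\rangle$ is all of $\mathcal{N}(\Ku(X))$ and not an index-$n$ sublattice. Of your three suggested fixes, the last (``matching with the known description'') is circular. The other two work, but the cleanest non-computational argument is worth recording: since $\det\begin{psmallmatrix}-2&0\\0&-2\end{psmallmatrix}=4$, an overlattice of index $n$ would have discriminant $4/n^2$, so only $n=2$ is possible, and that overlattice would be a rank-two negative definite even unimodular lattice, which does not exist (even unimodular lattices of rank $2$ are indefinite). Alternatively one solves the four orthogonality equations $\chi(E_i,v)=0$ integrally inside the explicit rank-six lattice $\mathcal{N}(X)$ and checks the solution set is exactly $\mathbb{Z}\Lambda_1\oplus\mathbb{Z}\Lambda_2$. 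With that step made precise, your proposal is a complete and correct proof strategy.
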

Here $H:=\gamma^*_X\sigma_1$ and  $\Sigma':=\gamma_X^*\sigma_2$. We have
\[\mathrm{ch}(\cU)=2-H+(-\frac{1}{2}H^2+\Sigma')+\frac{1}{3}L-\frac{1}{12}P.\]
By standard computation, we see 
$$H^2\Sigma'=H^2.\gamma_{X}^{*} \sigma_{2}=(\gamma_{X}^{*} \sigma_{1})^2.\gamma_{X}^{*} \sigma_{2}=\sigma_{1}^2.\sigma_{2}=(\sigma_2)^2+\sigma_{1,1}.\sigma_2=6.$$ 
In \cite{pertusi2019double}, the Todd class of $X$ is calculated \[\operatorname{td}(X)=1+H+\left(\frac{2}{3} H^{2}-\frac{1}{12} \Sigma'\right)+\frac{17}{60} H^{3}+\frac{1}{10} H^{4}.\]
Then for any $\kappa=a+bH+(cH^2+d\Sigma')+eL+fP\in\mathcal{N}(\Ku(X))$,
the Euler characteristic is given by  \[\chi(X,\kappa)=(\operatorname{ch}(\kappa).\operatorname{td}(X))_4=a+\frac{17}{6}b+\frac{37}{6}c+\frac{11}{3}d+e+f.\]



    
    




Now we are going to introduce a functor $T$ on $\Ku(X)$, which is defined by $T:=\bL_{\oh_X}\circ \mathbb{D}$.

\begin{proposition}
\label{involution_functor_original}
The functor $T$ is an involutive auto-equivalence on $\Ku(X)$. 
\end{proposition}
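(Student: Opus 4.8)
The plan is to show that $T=\bL_{\oh_X}\circ\mathbb{D}$ is a well-defined exact functor $\Ku(X)\to\Ku(X)$, that it is an equivalence, and that $T^2\cong\mathrm{id}$. First I would check that $T$ lands in $\Ku(X)$. Recall the two semi-orthogonal decompositions
$$\D^b(X)=\langle\Ku(X),\oh_X,\mathcal{U}^\vee,\oh_X(H),\mathcal{U}^\vee(H)\rangle=\langle\oh_X(-H),\mathcal{U},\Ku(X),\oh_X,\mathcal{U}^\vee\rangle,$$
the second obtained from the first by Serre duality (tensoring by $\oh_X(-H)$ and using that $\omega_X=\oh_X(-2H)$ for an index-two Fano fourfold, together with $\Ku(X)^\vee\otimes\oh_X(-2H)=\Ku(X)$). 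For $E\in\Ku(X)$, the derived dual $\mathbb{D}(E)=R\sheafhom_X(E,\oh_X)$ is right-orthogonal to the duals of the exceptional objects appearing after $\Ku(X)$ in the first decomposition, i.e. $\mathbb{D}(E)\in{}^\perp\langle\mathbb{D}\oh_X,\mathbb{D}\mathcal{U}^\vee,\mathbb{D}\oh_X(H),\mathbb{D}\mathcal{U}^\vee(H)\rangle=\langle\oh_X(-H),\mathcal{U},\Ku(X),\oh_X\rangle$, and it is also left-orthogonal to $\mathcal{U}^\vee$ since $\Hom(\oh_X,E\otimes(\mathcal{U}^\vee)^\vee[\bullet])=\Hom(\mathcal{U}^\vee\otimes\oh_X,\ldots)$ — more directly, $\mathbb{D}(E)\in\langle\oh_X(-H),\mathcal{U},\Ku(X),\oh_X,\mathcal{U}^\vee\rangle$ with vanishing components against the three terms on the right. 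Hence $\mathbb{D}(E)\in\langle\oh_X(-H),\mathcal{U},\Ku(X),\oh_X\rangle$, and applying $\bL_{\oh_X}$ and then observing the result is still right-orthogonal to $\oh_X(-H),\mathcal{U}$ (these are unaffected, since $\bL_{\oh_X}$ only modifies the $\oh_X$-component and $\Hom(\oh_X(-H),\oh_X[\bullet])$, $\Hom(\mathcal{U},\oh_X[\bullet])$ vanish by the SOD) puts $T(E)=\bL_{\oh_X}\mathbb{D}(E)$ into $\Ku(X)$.

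Next, $T$ is an equivalence: $\mathbb{D}$ is a (contravariant) auto-equivalence of $\D^b(X)$ with $\mathbb{D}^2\cong\mathrm{id}$, and left mutation $\bL_{\oh_X}$ across an exceptional object is an equivalence onto its image. So $T$ is at least fully faithful with image all of $\Ku(X)$ by the orthogonality bookkeeping above; being a composite of equivalences (appropriately restricted), it is an equivalence. The substantive part is the involutivity $T^2\cong\mathrm{id}_{\Ku(X)}$. I would compute $T^2(E)=\bL_{\oh_X}\mathbb{D}\bL_{\oh_X}\mathbb{D}(E)$. The key identity is how $\mathbb{D}$ interacts with left mutation: for an exceptional object $\mathcal{E}$ one has $\mathbb{D}\circ\bL_{\mathcal{E}}\cong \bR_{\mathbb{D}\mathcal{E}}\circ\mathbb{D}$ (dualizing turns left mutations into right mutations and reverses the order), and here $\mathbb{D}\oh_X=\oh_X$. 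So $\mathbb{D}\bL_{\oh_X}\mathbb{D}\cong\bR_{\oh_X}$, giving $T^2\cong\bL_{\oh_X}\bR_{\oh_X}$. On $\Ku(X)$ one then needs $\bL_{\oh_X}\bR_{\oh_X}\cong\mathrm{id}$; this is a general fact that $\bL_{\mathcal{E}}\bR_{\mathcal{E}}\cong\mathrm{id}$ when restricted to the subcategory orthogonal to $\mathcal{E}$, or can be seen because on $E\in\Ku(X)$ with $\oh_X\in\Ku(X)^\perp$ both $\bR_{\oh_X}$ and $\bL_{\oh_X}$ act on an object already in a $\oh_X$-admissible complement and undo each other via the triangles defining mutation. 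I would spell out the mutation triangles
$$\bR_{\oh_X}(F)\to F\to \RHom(F,\oh_X)^\vee\otimes\oh_X,\qquad \RHom(\oh_X,G)\otimes\oh_X\to G\to\bL_{\oh_X}(G)$$
and chase them to verify the composite is the identity on objects of $\Ku(X)$, then upgrade to a functorial isomorphism.

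The main obstacle I anticipate is bookkeeping with the two semi-orthogonal decompositions and making the interaction $\mathbb{D}\circ\bL_{\oh_X}\cong\bR_{\oh_X}\circ\mathbb{D}$ precise — in particular being careful that $\mathbb{D}$ is contravariant (so it reverses orders of mutations and swaps left/right), and that the projection/mutation functors are being applied in the correct ambient category. Everything else (the Serre-duality form of the second SOD, $\omega_X\cong\oh_X(-2H)$, $\mathbb{D}^2\cong\mathrm{id}$, $\mathbb{D}\oh_X\cong\oh_X$) is standard and can be cited or checked in one line. A cleaner alternative that sidesteps some of this: identify $T$ with the composition of $\mathbb{D}$ and the projection $\pr_1$ restricted to $\Ku(X)$, and use that on $\Ku(X)$ the Serre functor is $[2]$ (K3 category), deducing involutivity from a direct argument that $T$ acts on the numerical Grothendieck group $\mathcal{N}(\Ku(X))=\langle\Lambda_1,\Lambda_2\rangle$ by an involution compatible with the one on objects — but I would still fall back on the mutation-triangle computation for the object-level statement.
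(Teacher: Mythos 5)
Your proposal follows essentially the same route as the paper's proof: the paper likewise uses Serre duality to place $\mathbb{D}(E)$ in $\langle \Ku(X),\oh_X\rangle$, reads off both $T(E)\in\Ku(X)$ and full faithfulness from the mutation triangle $\RHom(\oh_X,\mathbb{D}(E))\otimes\oh_X\to\mathbb{D}(E)\to T(E)$, and obtains $T^2\cong\mathrm{id}$ by applying $\mathbb{D}$ to that triangle and mutating once more --- which is exactly your identity $\mathbb{D}\circ\bL_{\oh_X}\cong\bR_{\oh_X}\circ\mathbb{D}$ unwound on objects, together with $\bL_{\oh_X}|_{\Ku(X)}=\mathrm{id}$. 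One slip to correct: the vanishings you invoke to see that $\bL_{\oh_X}$ does not disturb orthogonality to $\oh_X(-H)$ and $\cU$ are written in the wrong direction --- $\Hom(\oh_X(-H),\oh_X)=H^0(\oh_X(H))$ and $\Hom(\cU,\oh_X)=H^0(\cU^{\vee})$ are both nonzero. What you actually need, after applying $\Hom(-,\oh_X(-H))$ (resp.\ $\Hom(-,\cU)$) to the mutation triangle, is $\RHom(\oh_X,\oh_X(-H))=0$ and $\RHom(\oh_X,\cU)=0$, which is what the decomposition $\langle\oh_X(-H),\cU,\Ku(X),\oh_X,\cU^{\vee}\rangle$ provides; with that repaired the argument goes through.
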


\begin{proof}

First we prove that for any object $E\in \Ku(X)$, we have $T(E)\in \Ku(X)$. Indeed, by the definition of $\bL_{\oh_X}$, we have a triangle
\begin{equation} \label{invo}
    \RHom(\oh_X, \mathbb{D}(E))\otimes \oh_X\to \mathbb{D}(E)\to \bL_{\oh_X}(\mathbb{D}(E))=T(E).
\end{equation}

By Serre duality, we have 
\[\Hom(\cU^{\vee}(H), \mathbb{D}(E)[k])=\Hom(\mathbb{D}(E), \cU[4-k])=\Hom(\cU^{\vee}, E[4-k])=0\] 
for any $k$. Similarly, we see $\RHom(\oh_X(H),\mathbb{D}(E))=\RHom(\cU^{\vee},\mathbb{D}(E))=0$, which implies $\mathbb{D}(E)\in \langle \Ku(X), \oh_X \rangle$. As $\oh_X \in \langle \Ku(X), \oh_X \rangle$, we have  $\bL_{\oh_X}(\mathbb{D}(E))\in \langle \Ku(X), \oh_X \rangle$. Thus if we apply $\Hom(\oh_X, -)$ to the triangle (\ref{invo}), we have $\RHom(\oh_X, \bL_{\oh_X}(\mathbb{D}(E)))=0$, which implies $T(E)=\bL_{\oh_X}(\mathbb{D}(E))\in \Ku(X)$.

Then for any object $F\in \Ku(X)$, if we apply $\Hom(\mathbb{D}(F),-)$ to the triangle (\ref{invo}), since $\RHom(\mathbb{D}(F), \oh_X)=\RHom(\oh_X, F)=0$, we obtain natural isomorphisms
\[\RHom(\mathbb{D}(F), \mathbb{D}(E))\cong \RHom(\mathbb{D}(F), T(E))\cong \RHom(T(F), T(E)),\]
thus $T$ is fully faithful. Note that the last isomorphism follows from $T(E)\in \Ku(X)$ and the adjunction of $\bL_{\oh_X}$.

Now applying the functor $\mathbb{D}$ to the triangle (\ref{invo}), we obtain a triangle
\begin{equation}
    \mathbb{D}(T(E))\to E\to \RHom(\oh_X, \mathbb{D}(E))^{\vee}\otimes \oh_X.
\end{equation}
Taking the functor $\bL_{\oh_X}$ again, by $E\in \Ku(X)$ and exceptionality of $\oh_X$, we have $T(T(E))\cong \bL_{\oh_X}E=E$, which shows that $T\circ T\cong \mathrm{id}_{\Ku(X)}$, i.e., $T$ is an involution on $\Ku(X)$.
\end{proof}

\begin{remark}
It is easy to see that the induced action of $T$ on $\mathcal{N}(\Ku(X))$ will fix $\Lambda_1$ and map $\Lambda_2$ to $-\Lambda_2$. 
\end{remark}

\section{Stability conditions on Kuznetsov components of Gushel--Mukai fourfolds}\label{stability_condition_GM}
In this section, we recall Bridgeland stability conditions on a triangulated category and the notion of stability conditions on the Kuznetsov component of an ordinary  GM fourfold. We follow from \cite[Section 2]{bayer2017stability}.

\subsection{Stability conditions}

Let $\cD$ be a triangulated category and $K_0(\cD)$ be its Grothendieck group. Fix a surjective morphism to a finite rank lattice $v : K_0(\cD) \ra \Lambda$. 

\begin{definition}
The \emph{heart of a bounded t-structure} on $\cD$ is an abelian subcategory $\cA \sst \cD$ such that the following conditions are satisfied
\begin{enumerate}
    \item for any $E, F \in \cA$ and $n <0$, we have $\Hom(E, F[n])=0$,
    \item for any object $E \in \cD$, there exists a sequence of morphisms 
    \[ 0=E_0 \xrightarrow{\phi_1} E_1 \xrightarrow{\phi_2} \cdots \xra{\phi_m} E_m=E \]
    such that $\cone(\phi_i)$ is in the form $A_i[k_i]$, for some sequence $k_1 > k_2 > \cdots > k_m$ of integers and $A_i \in \cA$.
\end{enumerate}
\end{definition}

\begin{definition}
Let $\cA$ be an abelian category and $Z : K_0(\cA) \ra \mathbb{C}$ be a group homomorphism such that for any $E \in \cA$ we have $\Im Z(E) \geq 0$ and if $\Im Z(E) = 0$ then $\Re Z(E) < 0$. Then we call $Z$ a \emph{stability function} on $\cA$.
\end{definition}

\begin{definition}
A \emph{stability condition} on $\cD$ is a pair $\sigma = (\cA, Z)$, where $\cA$ is the heart of a bounded t-structure on $\cD$ and $Z : \Lambda \ra \CC$ is a group homomorphism such that 
\begin{enumerate}
    \item the composition $Z \circ v : K_0(\cA) \cong K_0(\cD) \ra \CC$ is a stability function on $\cA$. From now on, we write $Z(E)$ rather than $Z(v(E))$.
\end{enumerate}
Much like the slope from classical $\mu$-stability, we can define a \emph{slope} $\mu_\sigma$ for $\sigma$ using $Z$. For any $E \in \cA$, set
\[
\mu_\sigma(E) := \begin{cases}  - \frac{\Re Z(E)}{\Im Z(E)}, & \Im Z(E) > 0 \\
+ \infty , & \text{else}.
\end{cases}
\]
We say an object $0 \neq E \in \cA$ is $\sigma$-(semi)stable if $\mu_\sigma(F) < \mu_\sigma(E)$ (respectively $\mu_\sigma(F) \leq \mu_\sigma(E)$) for any proper subobject $F \sst E$. 
\begin{enumerate}[resume]
    \item Any object $E \in \cA$ has a Harder--Narasimhan filtration in terms of $\sigma$-semistability defined above.
    \item There exists a quadratic form $Q$ on $\Lambda \otimes \mathbb{R}$ such that $Q|_{\ker Z}$ is negative definite  and $Q(E) \geq 0$ for all $\sigma$-semistable objects $E \in \cA$. This is known as the \emph{support property}.
\end{enumerate}
\end{definition}

\subsection{Stability conditions on Kuznetsov components of Gushel--Mukai fourfolds}
In \cite{perry2019stability}, inspired by the idea of constructing stability conditions on Kuznetsov components of cubic fourfolds in \cite{bayer2017stability}, the authors embed the Kuznetsov component~$\Ku(X)$ of a general ordinary GM fourfold $X$ into a twisted derived category of modules over a quadric threefold, associated to a conic fibration of $X$. In particular, they prove the following theorem. 

\begin{theorem}
\label{existence_stability_GM4}
Let $X$ be a GM fourfold. Then the category $\Ku(X)$ has a stability condition. 
\end{theorem}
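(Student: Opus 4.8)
The plan is to follow the strategy of \cite{bayer2017stability} for cubic fourfolds, as adapted by \cite{perry2019stability}: embed $\Ku(X)$ into a twisted derived category attached to a conic fibration of $X$, and transport a weak stability condition from the base along the induction procedure. First I would recall that a general ordinary GM fourfold $X$ carries a rational conic bundle structure. Concretely, projecting from a line or using the double cover structure, one obtains a quadric surface bundle (or conic fibration) $X \dashrightarrow S$ over a base threefold; after suitable blow-ups one gets an honest fibration, and $\Ku(X)$ embeds fully faithfully into $\D^b(S,\mathcal{B}_0)$, the derived category of modules over the even Clifford algebra $\mathcal{B}_0$ of the fibration, which is an Azumaya-type (hence twisted) sheaf of algebras on $S$. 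This is precisely the geometric input established in \cite{perry2019stability}; I would simply cite it rather than reprove it.

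Next I would set up the tilting machinery on $\D^b(S,\mathcal{B}_0)$. On the twisted side one has a numerical Grothendieck group with a natural rank/degree functional built from the $\mathcal{B}_0$-module structure; starting from the standard heart $\Coh(S,\mathcal{B}_0)$ one performs a double tilt, exactly as in the construction of $\sigma_{\alpha,\beta}$ on surfaces and threefolds in \cite{bayer2017stability}, to produce a family of weak stability conditions $\sigma_{\alpha,\beta}$ on $\D^b(S,\mathcal{B}_0)$. The key technical point here is the Bogomolov-type inequality: one needs that $\mathcal{B}_0$-twisted (Bridgeland) semistable objects satisfy a discriminant inequality guaranteeing the support property. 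This follows from the corresponding statement for the underlying (twisted) sheaves together with the finite, flat structure of $\mathcal{B}_0$ over $\oh_S$, and is the content of the relevant results imported from \cite{bayer2017stability}, \cite{perry2019stability}.

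Then comes the induction step: one intersects the weak stability condition $\sigma_{\alpha,\beta}$ with the subcategory $\Ku(X) \subset \D^b(S,\mathcal{B}_0)$. Following the general criterion of \cite[Section 2]{bayer2017stability} (the ``inducing stability conditions on admissible subcategories'' lemma), for a suitable choice of parameters $(\alpha,\beta)$ lying just below the relevant wall, the heart $\mathcal{A}(\alpha,\beta) = \Coh^{\beta}_{\alpha}(S,\mathcal{B}_0) \cap \Ku(X)$ together with the restricted central charge $Z_{\alpha,\beta}|_{\Ku(X)}$ is a genuine Bridgeland stability condition on $\Ku(X)$ — the Im-zero objects become genuinely negative because the only objects of the ambient heart supported in the problematic locus have already been removed by the Kuznetsov semiorthogonality, and the support property is inherited from the quadratic form on the ambient lattice restricted to $\mathcal{N}(\Ku(X))$. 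One must verify the two hypotheses of the inducing lemma: (i) that $\Ku(X)$-objects, shifted appropriately, lie in the ambient heart for the chosen parameters, and (ii) that no nonzero object of $\Ku(X)$ has $Z_{\alpha,\beta} = 0$; both reduce to cohomology-vanishing computations for the exceptional objects $\oh_X, \cU^\vee$ generating the orthogonal complement, analogous to the cubic fourfold case.

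The main obstacle, and the part requiring genuine work rather than citation, is the second step for GM fourfolds specifically: one needs the right Bogomolov-Gieseker-type inequality on the twisted threefold $(S,\mathcal{B}_0)$ — i.e. that tilt-semistable objects satisfy a cubic inequality in their Chern character — since the naive rank-two surface-type inequality is not enough to tilt a second time on a threefold. In \cite{perry2019stability} this is handled by reducing, via the conic bundle structure, to the known threefold inequality (from \cite{bayer2017stability} and its refinements) for the underlying untwisted or $\alpha$-twisted sheaves, and then checking it is compatible with the $\mathcal{B}_0$-action. Since the statement of Theorem~\ref{existence_stability_GM4} as given only asserts \emph{existence}, the cleanest proof is: cite the conic fibration embedding $\Ku(X) \hookrightarrow \D^b(S,\mathcal{B}_0)$ from \cite{perry2019stability}, cite the weak stability conditions $\sigma_{\alpha,\beta}$ and their support property on the twisted threefold, and then apply the inducing lemma of \cite[Section 2]{bayer2017stability} with the parameter choice that makes $Z_{\alpha,\beta}|_{\Ku(X)}$ a stability function — exactly the three-line argument that \cite{perry2019stability} gives, which we reproduce here for completeness.
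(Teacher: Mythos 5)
The paper offers no proof of this statement: Theorem~\ref{existence_stability_GM4} is quoted verbatim from \cite{perry2019stability}, with only the one-sentence gloss that the proof proceeds by embedding $\Ku(X)$ into a twisted derived category of Clifford modules over a threefold attached to a conic fibration of $X$. Your outline is a faithful reconstruction of the strategy of \cite{perry2019stability} and \cite{bayer2017stability} (conic fibration, embedding into $\D^b(S,\mathcal{B}_0)$, double tilt with a Bogomolov--Gieseker-type inequality, inducing lemma), so it is consistent with, and strictly more detailed than, what the paper itself does; the only imprecisions are cosmetic, e.g.\ the fibration arises from a linear projection rather than ``projection from a line or the double cover structure,'' and the even Clifford algebra $\mathcal{B}_0$ is not Azumaya over the discriminant locus.
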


We prove some simple properties for stability conditions on $\Ku(X)$, whose Serre functor is $S_{\Ku(X)}\cong[2]$.

\begin{proposition}
\label{homological_dimensional_heart}
Let $X$ be a GM fourfold and $\sigma$ be a stability condition on $\Ku(X)$ and $\cA$ be its heart. Then \begin{enumerate}
    \item the homological dimension of $\cA$ is $2$.
    \item If $X$ is a non-Hodge-special GM fourfold, then we have $\mathrm{ext}^1(A,A)\geq 4$ for any non-trivial object $A\in\cA$.
\end{enumerate}
\end{proposition}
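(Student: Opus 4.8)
The plan is to prove the two statements separately, using the Serre functor of $\Ku(X)$ and the structure of $\mathcal{N}(\Ku(X))$ established above.

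For part (1), I would argue that since $\Ku(X)$ is a K3 category, its Serre functor is $S_{\Ku(X)} = [2]$, i.e.\ the shift by $2$. Consequently, for any $E, F \in \cA$ and any $n > 2$, we have $\Hom(E, F[n]) \cong \Hom(F, E[2-n])^{\vee} = 0$ because $2 - n < 0$ and $\cA$ is the heart of a bounded t-structure (so negative $\Hom$'s between objects of $\cA$ vanish). This shows the homological dimension of $\cA$ is at most $2$. To see it is exactly $2$ (and not less), I would produce an explicit pair of objects, or a single object, in $\cA$ with nonvanishing $\Ext^2$. The cleanest way is: take any nonzero $A \in \cA$; then $\Hom(A, A[2]) \cong \Hom(A, A)^{\vee} \neq 0$ since $\mathrm{id}_A \in \Hom(A,A)$. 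So $\Ext^2(A,A) \neq 0$ already, which forces the homological dimension to be at least $2$. Combining, the homological dimension is exactly $2$.

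For part (2), let $A \in \cA$ be a nonzero object. The Euler form on $\Ku(X)$ restricted to $\cA$ reads
\[
\chi(A,A) = \hom(A,A) - \ext^1(A,A) + \ext^2(A,A),
\]
using part (1). By K3-ness and Serre duality, $\ext^2(A,A) = \hom(A,A) \geq 1$, so
\[
\ext^1(A,A) = 2\hom(A,A) - \chi(A,A) \geq 2 - \chi(A,A).
\]
Thus it suffices to show $\chi(A,A) \leq -2$ for every nonzero $A \in \cA$. Now $[A] \in \mathcal{N}(\Ku(X)) = \mathbb{Z}\Lambda_1 \oplus \mathbb{Z}\Lambda_2$, say $[A] = m\Lambda_1 + n\Lambda_2$, and by the computed Gram matrix $\chi([A],[A]) = -2(m^2 + n^2)$. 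Since $A \neq 0$ and $\cA$ has homological dimension $2$ with $\ext^2(A,A) = \hom(A,A) \neq 0$, we cannot have $\chi(A,A) = 0$ unless $[A] = 0$ in $\mathcal{N}(\Ku(X))$; so I must rule out the possibility that a nonzero object of $\cA$ is numerically trivial. This is where the \emph{very general} hypothesis enters: for very general $X$ one has $\mathcal{N}(\Ku(X)) \cong \mathbb{Z}^2$ with no extra classes, and one knows (e.g.\ from the fact that $\cA$ is the heart of a stability condition whose central charge factors through $\mathcal{N}(\Ku(X))$, together with the support property) that a nonzero object cannot have trivial class — if $v(A) = 0$ then $Z(A) = 0$, contradicting that $Z$ is a stability function on $\cA$ (which forces $Z(A) \neq 0$ for $A \neq 0$). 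Hence $(m,n) \neq (0,0)$, so $m^2 + n^2 \geq 1$ and $\chi(A,A) = -2(m^2+n^2) \leq -2$, giving $\ext^1(A,A) \geq 2 - (-2) = 4$.

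The main obstacle I anticipate is the step ruling out numerically trivial nonzero objects of $\cA$: one needs to be careful that the central charge $Z$ of the stability condition $\sigma$ genuinely factors through the rank-two lattice $\mathcal{N}(\Ku(X))$ and is a stability function (not merely a weak one) on $\cA$, so that $Z(A) = 0 \Rightarrow A = 0$. This is exactly where the structure of stability conditions on $\Ku(X)$ from \cite{perry2019stability} and the identification $\mathcal{N}(\Ku(X)) \cong \mathbb{Z}^2$ for very general $X$ are both used; everything else is formal manipulation of the Serre duality isomorphism and the Euler pairing.
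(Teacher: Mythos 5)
Your proposal is correct and follows essentially the same route as the paper: Serre duality $S_{\Ku(X)}=[2]$ for part (1), and the Euler-form computation $\ext^1(A,A)=2\hom(A,A)-\chi(A,A)$ combined with $\chi(A,A)\le -2$ (from the rank-two lattice with Gram matrix $\mathrm{diag}(-2,-2)$) for part (2). You are in fact slightly more careful than the paper at two points it leaves implicit — verifying the homological dimension is exactly $2$ rather than at most $2$, and ruling out numerically trivial nonzero objects via the stability-function property $Z(A)\neq 0$ — but these are refinements of the same argument, not a different one.
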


\begin{proof}
Let $A,B\in\cA$, then $\mathrm{Hom}(A,B[i])\cong\mathrm{Hom}(B[i],A[2])\cong\mathrm{Hom}(B,A[2-i])=0$ for $i\geq 3$. Thus $(1)$ holds. Let $A$ be a non-trivial object in $\cA$, then by $(1)$, $$\chi(A,A)=\mathrm{hom}(A,A)-\mathrm{ext}^1(A,A)+\mathrm{ext}^2(A,A).$$
Note that $\chi(A,A)\leq -2$. Then $\mathrm{ext}^1(A,A)=2\mathrm{hom}(A,A)-\chi(A,A)\geq 4$, which proves $(2)$. 
\end{proof}

By the same argument as in \cite[Lemma 2.4]{bayerK3}, we have

\begin{lemma} (Weak Mukai Lemma)
\label{mukai_lemma}
Let $A\rightarrow E\rightarrow B$ be a triangle in $\Ku(X)$ with $\mathrm{Hom}(A,B)=0$, then we have 
$$\mathrm{ext}^1(A,A)+\mathrm{ext}^1(B,B)\leq\mathrm{ext}^1(E,E).$$
\end{lemma}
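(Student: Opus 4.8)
The plan is to transport the classical Mukai argument to the K3 category $\Ku(X)$; the only non-formal input is that $S_{\Ku(X)}\cong[2]$, which has been recorded above. Write the triangle as $A\xrightarrow{\alpha}E\xrightarrow{\beta}B\xrightarrow{\gamma}A[1]$. The first step is to feed the hypothesis through Serre duality: $\Ext^2(B,A)\cong\Hom(A,B)^{\vee}=0$. This single vanishing is what makes the relevant connecting maps in the long exact sequences collapse.

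Next I would extract the two ``building blocks'' from the long exact sequences of $\Hom$ attached to the triangle. Applying $\Hom(B,-)$ and using $\Ext^2(B,A)=0$ shows that composition with $\beta$ gives a surjection $\Ext^1(B,E)\twoheadrightarrow\Ext^1(B,B)$; applying $\Hom(-,E)$ identifies the kernel of the map $\Ext^1(B,E)\to\Ext^1(E,E)$ (again composition with $\beta$) with the image of a connecting map, and one checks, using $\Hom(A,B)=0$, that this kernel is contained in the kernel of the surjection onto $\Ext^1(B,B)$. Hence the subspace $U_B:=\mathrm{im}\bigl(\Ext^1(B,E)\to\Ext^1(E,E)\bigr)$ satisfies $\dim U_B\ge\ext^1(B,B)$. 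Symmetrically, applying $\Hom(A,-)$ shows that composition with $\alpha$ embeds $\Ext^1(A,A)\hookrightarrow\Ext^1(A,E)$ (again thanks to $\Hom(A,B)=0$), and for $\phi\in\Ext^1(A,A)$ the obstruction to extending $\alpha[1]\circ\phi$ along $\alpha$ to a class in $\Ext^1(E,E)$ is the composite $\alpha[2]\circ\phi[1]\circ\gamma$, which lies in $\alpha[2]\cdot\Ext^2(B,A)=0$. Therefore the image of the map $\Ext^1(E,E)\to\Ext^1(A,E)$ (composition with $\alpha$) contains $\Ext^1(A,A)$, hence has dimension at least $\ext^1(A,A)$.

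Finally I would assemble these through the exact sequence $\Ext^1(B,E)\to\Ext^1(E,E)\to\Ext^1(A,E)$ coming from $\Hom(-,E)$: its exactness at the middle term gives $\ext^1(E,E)=\dim U_B+\dim\,\mathrm{im}\bigl(\Ext^1(E,E)\to\Ext^1(A,E)\bigr)\ge\ext^1(B,B)+\ext^1(A,A)$. The step I expect to be the main (though purely bookkeeping) obstacle is the second paragraph: one has to pin down each connecting map and shift precisely and verify that the hypothesis $\Hom(A,B)=0$ --- equivalently $\Ext^2(B,A)=0$ --- is exactly what prevents the $A$- and $B$-contributions inside $\Ext^1(E,E)$ from colliding. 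There is no conceptual difficulty here, but it is easy to mislabel a map.
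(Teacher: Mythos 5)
Your argument is correct and is precisely the classical Mukai-lemma proof that the paper invokes by citation (it gives no details, deferring to \cite[Lemma 2.4]{bayerK3} after noting $S_{\Ku(X)}\cong[2]$): the only input beyond formal long exact sequences is $\Ext^2(B,A)\cong\Hom(A,B)^{\vee}=0$, and your bookkeeping of the connecting maps and obstruction classes checks out. No discrepancy with the paper's approach.
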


Using Proposition \ref{homological_dimensional_heart} and  Lemma \ref{mukai_lemma}, the same argument as \cite[Lemma A.5]{bayer2017stability} shows that:

\begin{proposition}
\label{stability_object_ext1_small}
Let $X$ be a non-Hodge-special GM fourfold. If $E\in\Ku(X)$ is an object with $\mathrm{ext}^1(E,E)\leq 4$, then $E$ is stable with respect to every stability condition~$\sigma$ on $\Ku(X)$. 
\end{proposition}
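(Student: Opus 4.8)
The plan is to argue by contradiction using the Weak Mukai Lemma together with the lattice-theoretic constraints on $\mathcal{N}(\Ku(X))$ and the $\mathrm{ext}^1 \geq 4$ bound from Proposition~\ref{homological_dimensional_heart}. Suppose $E$ is not $\sigma$-semistable for some stability condition $\sigma = (\cA, Z)$. After shifting we may assume $E \in \cA$, and we take a Harder--Narasimhan (or Jordan--Hölder) decomposition. The first reduction is to pass to a two-step filtration: pick a triangle $A \to E \to B$ with $A, B \in \cA$ nonzero, $\mu_\sigma(A) > \mu_\sigma(B)$, so that $\Hom(A,B) = 0$ (semistable factors with strictly decreasing slopes have no homs from higher-slope to lower-slope pieces, and a destabilizing sub gives such a triangle). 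Then Lemma~\ref{mukai_lemma} yields
\[
\mathrm{ext}^1(A,A) + \mathrm{ext}^1(B,B) \leq \mathrm{ext}^1(E,E) \leq 9.
\]
Since $A$ and $B$ are nonzero objects of $\cA$, Proposition~\ref{homological_dimensional_heart}(2) gives $\mathrm{ext}^1(A,A) \geq 4$ and $\mathrm{ext}^1(B,B) \geq 4$, so both lie in $\{4\}$ essentially — more precisely $\mathrm{ext}^1(A,A), \mathrm{ext}^1(B,B) \in \{4,5\}$ and their sum is $\leq 9$, hence (up to swapping) $\mathrm{ext}^1(A,A) = 4$.

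Next I would extract numerical consequences. For a nonzero $A \in \cA$ with $\mathrm{ext}^1(A,A) = 4$, the homological dimension bound gives $\chi(A,A) = 2\hom(A,A) - \mathrm{ext}^1(A,A) + (\mathrm{ext}^2(A,A) - \hom(A,A))$; using Serre duality $\mathrm{ext}^2(A,A) = \hom(A,A)$ one gets $\chi(A,A) = 2\hom(A,A) - 4$. Since $\chi$ is even and negative-definite-ish on $\mathcal{N}(\Ku(X)) \cong \mathbb{Z}^2$ with Gram matrix $\mathrm{diag}(-2,-2)$, every class $v$ satisfies $\chi(v,v) = -2(m^2 + n^2) \leq -2$, so $\chi(A,A) \leq -2$ forces $\hom(A,A) \leq 1$, i.e. $\hom(A,A) = 1$ and $\chi(A,A) = -2$; in particular $[A]$ is a primitive class (a $\pm$ generator up to the $\mathbb{Z}^2$ structure, i.e. $m^2 + n^2 = 1$). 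The same analysis applies to $B$ if $\mathrm{ext}^1(B,B) = 4$; if instead $\mathrm{ext}^1(B,B) = 5$ then $\chi(B,B) = 2\hom(B,B) - 5$ is odd, contradicting that $\chi$ takes even values — so in fact $\mathrm{ext}^1(B,B) = 4$ as well, and both $A, B$ are simple with $\chi = -2$ and primitive numerical class. Now $[E] = [A] + [B]$ in $\mathcal{N}(\Ku(X))$ with $\chi([E],[E]) = -2$; writing $[A] = (a_1,a_2)$, $[B] = (b_1,b_2)$ with $a_i^2+\text{(coord sum of squares)} = 1$ each, one checks that $-2 = \chi([E],[E]) = -2\big((a_1+b_1)^2 + (a_2+b_2)^2\big)$ together with $\chi([A],[B]) = -2(a_1 b_1 + a_2 b_2)$ forces $\chi([A],[B]) = 0$, hence $\hom(A,B) = \hom(B,A) = 0$ and $\mathrm{ext}^1(A,B) = -\chi([A],[B]) + \hom(A,B) + \mathrm{ext}^2(A,B) = \mathrm{ext}^1(A,B)$ — more usefully, $\mathrm{ext}^1(A,B) = \mathrm{ext}^1(B,A)$ by Serre duality and $\chi([A],[B]) = 0$ gives $\mathrm{ext}^1(A,B) = \hom(A,B) + \mathrm{ext}^2(A,B) = \mathrm{ext}^1(B,A)$ once one knows these Hom's vanish.

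The contradiction then comes from counting $\mathrm{ext}^1(E,E)$ more carefully via the long exact sequences associated to $A \to E \to B$. Applying $\Hom(-,E)$ and $\Hom(E,-)$ and using $\Hom(A,B)=0=\Hom(B,A)$, $\hom(A,A)=\hom(B,B)=1$, $\chi(A,B)=0$, one computes $\mathrm{ext}^1(E,E) = \mathrm{ext}^1(A,A) + \mathrm{ext}^1(B,B) + \mathrm{ext}^1(A,B) + \mathrm{ext}^1(B,A) - 2\hom(E,E) + \text{(correction terms)}$; the point is that $\mathrm{ext}^1(A,B) + \mathrm{ext}^1(B,A) = 2\,\mathrm{ext}^1(A,B) \geq 2$ since a nonsplit extension exists (as $E$ is an actual extension of $B$ by $A$, $\mathrm{ext}^1(B,A) \geq 1$). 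Combining with $\mathrm{ext}^1(A,A) = \mathrm{ext}^1(B,B) = 4$ gives $\mathrm{ext}^1(E,E) \geq 4 + 4 + 2 - 2\hom(E,E) + \dots \geq 10$ after a careful bookkeeping of the $\hom$ and $\mathrm{ext}^2$ contributions (here one uses $\hom(E,E)$ small, bounded using that $E$ has $\chi(E,E)=-2$ so $\hom(E,E) \leq 1$ if $E$ is semistable — but $E$ is not, so one instead bounds $\hom(E,E)$ directly from the filtration). This contradicts $\mathrm{ext}^1(E,E) \leq 9$.

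\textbf{Main obstacle.} The delicate point is the final bookkeeping: turning the Mukai-lemma inequality $\mathrm{ext}^1(A,A) + \mathrm{ext}^1(B,B) \leq \mathrm{ext}^1(E,E) \leq 9$ into a genuine contradiction requires showing the cross-terms $\mathrm{ext}^1(A,B) + \mathrm{ext}^1(B,A)$ actually contribute to $\mathrm{ext}^1(E,E)$ and are not absorbed, which needs control on $\hom(E,E)$ and $\mathrm{ext}^2(E,E)$ along the filtration — and more fundamentally, one must handle the case of a longer Jordan--Hölder filtration with several factors, where the naive two-step argument must be applied inductively (peeling off one semistable factor at a time and tracking that $\mathrm{ext}^1$ only grows). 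I expect the cleanest route is: reduce to all HN/JH factors being $\sigma$-stable with $\chi = -2$ and primitive class, observe there are at most finitely many such classes ($\Lambda_1, \Lambda_2$ and their negatives up to sign, but slope-ordering kills most combinations), and then the strict slope inequalities force at most two factors, reducing to exactly the two-step case above. Verifying that $\mathrm{ext}^1(A,B), \mathrm{ext}^1(B,A) \geq 1$ (nonsplitness on both sides, via Serre duality and $\chi = 0$) is the small lemma that makes the $\geq 10$ bound work.
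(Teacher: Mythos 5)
There are two genuine problems with your proposal. First, you write ``after shifting we may assume $E \in \cA$,'' but this is not free: an arbitrary object of $\Ku(X)$ with the stated invariants need not be a shift of an object of the heart, and establishing this is half of the paper's proof. The paper runs the spectral sequence $E_2^{p,q}=\bigoplus_i \Hom^p(\cH^i(E),\cH^{i+q}(E))\Rightarrow \Hom^{p+q}(E,E)$, uses homological dimension $2$ to get $E_\infty^{1,0}=E_2^{1,0}$, and deduces $9\geq \ext^1(E,E)\geq \sum_i \ext^1(\cH^i(E),\cH^i(E))\geq 4r$ where $r$ is the number of nonzero cohomology objects; the case $r=2$ is then excluded by exactly the lattice argument below. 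Without this step your proof does not get off the ground, since in the intended application $E=\pr_2(I_C)$ is a priori just a complex.

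Second, your endgame contains an arithmetic slip that causes you to miss the contradiction you already have, and to replace it with an unnecessary and incomplete $\ext^1$-bookkeeping. Once $\ext^1(A,A)=\ext^1(B,B)=4$ (your parity argument ruling out $5$ is fine), both $[A]$ and $[B]$ lie in $\{\pm\Lambda_1,\pm\Lambda_2\}$, i.e.\ have square $-2$ for the form $\mathrm{diag}(-2,-2)$. Then $[E]=[A]+[B]$ gives $-2=\chi([E],[E])=-4+2\chi([A],[B])$, i.e.\ $\chi([A],[B])=1$, which is impossible because the form only takes even values. That is the whole contradiction, and it is what the paper means by ``$[A]+[B]$ can never equal $[E]$.'' You instead conclude ``forces $\chi([A],[B])=0$'' (which is inconsistent with your own equations, since it would give $\chi([E],[E])=-4$) and then launch into the cross-term estimate $\ext^1(E,E)\geq \ext^1(A,A)+\ext^1(B,B)+\ext^1(A,B)+\ext^1(B,A)-\cdots\geq 10$, which you yourself flag as the unresolved ``main obstacle''; this entire branch is not needed. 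Finally, note that your argument as written only addresses non-semistability; excluding strict semistability is a separate (if routine) step, which the paper handles by citing \cite[Lemma~5.13]{PY20}.
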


\section{Conics on Gushel--Mukai fourfolds}\label{classfication_conics_GM}
In \cite{iliev2011fano}, the moduli space of conics $F_g(X)$ on a general ordinary GM fourfold $X$ plays an important role in the construction of the hyperkähler variety $\widetilde{Y}_{A^\perp}$.
In this section, we present some basic properties of conics in $F_g(X)$ and list some calculation results which heavily rely on the geometry of $X$.

According to \cite{debarre2015special}, there are two types of planes in $\Gr(2,V_5)$ which are called \emph{$\sigma$-planes} and \emph{$\rho$-planes}. A \emph{$\sigma$-plane} is in the form $\mathbb{P}(V_1\wedge V_4)$ and a \emph{$\rho$-plane} is in the form $\mathbb{P}(\wedge^2V_3)$ for some subspaces $V_1\subset V_4\subset V_5$ and $V_3\subset V_5$, respectively. Similarly, a \emph{$\sigma$-3-plane} is $\PP^3\subset\Gr(2,V_5)$ in the form $\PP(V_1\wedge V_4)$ for some subspaces $V_1,V_4\subset V_5$ with $V_1\nsubseteq V_4$. Note that in $\mathrm{Gr}(2,V_5)$, every $\PP^3$ is a $\sigma$-3-plane.

Recall that for a $n$-dimensional vector space $V_n$ and $r\geq 1$, the zero locus of a non-zero section of  $\cU^{\vee}_{\Gr(r,V_n)}$ is $\Gr(r,V_{n-1})$ for a codimension one subspace $V_{n-1}\subset V_n$, where $\cU_{\Gr(r,V_n)}$ is the tautological subbundle of $\Gr(r,V_n)$. Similarly, since the tautological quotient bundle $\cQ_{\Gr(r,V_n)}$ is isomorphic to the dual of the tautological subbundle of $\Gr(n-r, V_n^{\vee})$ via the isomorphism $\Gr(r,V_n)\cong \Gr(n-r,V_n^{\vee})$, then the zero locus of a non-zero section of $\cQ_{\Gr(r,V_n)}$ is $\Gr(r-1, V_n/V_1)\subset \Gr(r,V_n)$ for a one-dimensional subspace $V_1\subset V_n$.

Therefore, a $\sigma$-3-plane in $\Gr(2,V_5)$ is indeed the zero locus of a non-zero section of $\cQ_{\Gr(2,V_5)}$, which is isomorphic to $\Gr(1,3)$. Similarly, a $\rho$-plane is nothing but the zero locus of two linearly independent sections of $\cU^{\vee}_{\Gr(2,V_5)}$, which is isomorphic to $\Gr(2,3)$.

\begin{definition}[{\cite[Section 3.1]{iliev2011fano}}]\label{classification_of_conics}
Let $X$ be an ordinary GM fourfold, there are three types of conics
\begin{enumerate}
    \item $\tau$-conics are conics spanning planes which are not contained in $\mathrm{Gr}(2,V_5)$.
    \item $\sigma$-conics are conics parametrizing lines passing through a common point, i.e., they span $\sigma$-planes.
    \item $\rho$-conics are conics parametrizing lines contained in a common plane, i.e., they span $\rho$-planes.
\end{enumerate}
\end{definition}

\begin{lemma}[{\cite[Section  3.2]{iliev2011fano}}]\label{splitting_type}
Let $X$ be a general ordinary GM fourfold and $\cU, \cQ$ be the tautological sub and the quotient bundle on $X$. For a smooth conic $C$ on $X$, we have
\begin{enumerate}
    \item $\mathcal{U}^{\vee}|_C\cong\oh_C(1)\oplus\oh_C(1)$ and $\cQ|_C\cong\oh_C(1)\oplus\oh_C(1)\oplus\oh_C$ if $C$ is a $\tau$-conic;
    \item $\mathcal{U}^{\vee}|_C\cong\oh_C(2)\oplus\oh_C$ and $\mathcal{Q}|_C\cong\oh_C(1)\oplus\oh_C(1)\oplus\oh_C$ if $C$ is a $\sigma$-conic;
    \item $\mathcal{U}^{\vee}|_C\cong\oh_C(1)\oplus\oh_C(1)$ and $\cQ|_C\cong\oh_C(2)\oplus\oh_C\oplus\oh_C$ if $C$ is a $\rho$-conic.
\end{enumerate}
\end{lemma}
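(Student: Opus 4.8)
The plan is to compute the restriction of the tautological bundles $\cU^\vee$ and $\cQ$ to a smooth conic $C\subset X$ by identifying the splitting type of rank-two and rank-three vector bundles on $C\cong\PP^1$, and then pinning down the summands using the specific incidence geometry of the three types of conics described in Definition~\ref{classification_of_conics}. Since any vector bundle on $\PP^1$ splits as a direct sum of line bundles, it suffices to compute, for each type, the degrees of the summands; by the projection formula $\deg(\cU^\vee|_C)=(c_1(\cU^\vee)\cdot C)=(\sigma_1\cdot C)$, which equals $2$ for every smooth conic (as $C$ has Plücker degree $2$), and similarly $\deg(\cQ|_C)=(\sigma_1\cdot C)=2$ since $\det\cQ=\det\cU^\vee=\oh(\sigma_1)$. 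So in all three cases $\cU^\vee|_C$ has total degree $2$ on $\PP^1$, hence is either $\oh_C(1)^{\oplus2}$ or $\oh_C(2)\oplus\oh_C$ (the summand $\oh_C(3)\oplus\oh_C(-1)$ and worse are excluded because $\cU^\vee$ is globally generated, so its restriction to $C$ is globally generated and no summand can have negative degree), and likewise $\cQ|_C$ is one of $\oh_C(1)^{\oplus2}\oplus\oh_C$, $\oh_C(2)\oplus\oh_C^{\oplus2}$.

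**Next I would** distinguish the remaining ambiguity geometrically. A point of $\Gr(2,V_5)$ is a $2$-plane in $V_5$; restricting the tautological sequence $0\to\cU\to V_5\otimes\oh\to\cQ\to0$ to $C$ gives a family of $2$-planes $\cU_c\subset V_5$ parametrized by $c\in C$. The key observation is: $\cU^\vee|_C$ has a trivial summand $\oh_C$ (equivalently $\cU|_C$ has a summand $\oh_C$, equivalently $\cU^\vee|_C\cong\oh_C(2)\oplus\oh_C$) precisely when there is a common line $V_1\subset V_5$ contained in every $\cU_c$ — that is, when the conic's lines all pass through a common point of $\PP(V_5)$, which is exactly the definition of a $\sigma$-conic. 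Dually, $\cQ|_C$ has a summand of degree $2$ (i.e. $\cQ|_C\cong\oh_C(2)\oplus\oh_C^{\oplus2}$) precisely when all the planes $\cU_c$ lie in a common $3$-dimensional subspace $V_3\subset V_5$, which is exactly the definition of a $\rho$-conic. For a $\tau$-conic neither common line nor common hyperplane condition holds, forcing $\cU^\vee|_C\cong\oh_C(1)^{\oplus2}$ and $\cQ|_C\cong\oh_C(1)^{\oplus2}\oplus\oh_C$; and the same direct-sum/degree bookkeeping fixes the complementary bundle in the $\sigma$ and $\rho$ cases (in the $\sigma$ case $\cU^\vee$ is special but $\cQ$ inherits the generic splitting since no common $V_3$ exists, and symmetrically in the $\rho$ case).

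**The main obstacle** is the careful justification of the two geometric dichotomies just stated — in particular showing that the presence of a trivial summand in $\cU^\vee|_C$ is genuinely equivalent to the conic being of $\sigma$-type (and not merely implied by it), and likewise for $\cQ|_C$ and $\rho$-conics. One direction is an easy linear-algebra argument (a common line or common $V_3$ produces the claimed sub- or quotient-line-bundle), but the converse requires ruling out that a $\tau$-conic could accidentally acquire a special splitting; here I would invoke the generality of $X$ (as in the hypothesis of Lemma~\ref{splitting_type}) together with the classification of planes in $\Gr(2,V_5)$ into $\sigma$- and $\rho$-planes, which forces any conic spanning a plane inside $\Gr(2,V_5)$ to be of $\sigma$- or $\rho$-type, while a $\tau$-conic spans a plane meeting $\Gr(2,V_5)$ only along $C$ and hence cannot admit either degeneration. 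Since this is essentially the content of \cite[Section~3.2]{iliev2011fano}, the proof amounts to recalling their argument and organizing the degree count above; I would present it as a short direct computation rather than reproving the plane classification.
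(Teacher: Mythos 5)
The paper does not prove this lemma at all: it is imported verbatim from Iliev--Manivel \cite[Section 3.2]{iliev2011fano}, so there is no internal proof to compare against. Your argument is a correct, self-contained proof and follows the standard route: since every bundle on $C\cong\PP^1$ splits and $\cU^\vee$, $\cQ$ are globally generated of Plücker degree $2$ on $C$, only the two listed splitting types are possible for each bundle, and the special type is detected by $h^0(\cU|_C)$ (respectively $h^0(\cQ^\vee|_C)$), which --- because a global section of a subsheaf of $V_5\otimes\oh_C$ is a constant vector (functional) --- computes $\dim\bigcap_c\cU_c$ (respectively $\operatorname{codim}\sum_c\cU_c$), i.e.\ exactly the common-point and common-plane conditions defining $\sigma$- and $\rho$-conics. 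One simplification: you do not need the generality of $X$ or the plane classification to rule out a $\tau$-conic acquiring a special splitting. If all $\cU_c$ contained a common $V_1$, then $C$ would lie in the linear $\PP^3=\PP(V_1\wedge(V_5/V_1))\subset\Gr(2,V_5)$, so $\langle C\rangle\subset\Gr(2,V_5)$, contradicting the definition of a $\tau$-conic; likewise a common $V_3$ forces $\langle C\rangle=\PP(\wedge^2V_3)\subset\Gr(2,V_5)$. The same dimension count shows the $\sigma$- and $\rho$-conditions are mutually exclusive for a smooth conic (both at once would confine $C$ to a $\PP^1$), which is what fixes the ``complementary'' bundle in cases (2) and (3). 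With that observation your proof is complete and purely about the embedding $C\subset\Gr(2,V_5)$, independent of $X$.
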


In fact, the type of a conic on $X$ is detected by the numbers $\mathrm{hom}(\cU, I_C)$ and $\mathrm{hom}(\cQ^{\vee}, I_C)$. 


\begin{lemma} \label{classify-conic}
Let $X$ be a general ordinary GM fourfold and $C$ be a conic on $X$.

\begin{enumerate}
    \item If $C$ is a $\tau$-conic, then $\Hom(\cU, I_C)=k$ and $\Hom(\cQ^{\vee}, I_C)=0$.
    
    \item If $C$ is a $\sigma$-conic, then $\Hom(\cU, I_C)=k$ and $\Hom(\cQ^{\vee}, I_C)=k$.
    
    \item If $C$ is a $\rho$-conic, then $\Hom(\cU, I_C)=k^2$ and $\Hom(\cQ^{\vee}, I_C)=0$.
\end{enumerate}

\end{lemma}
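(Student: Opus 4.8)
The plan is to compute the relevant $\Hom$-spaces by relating $\Hom(\cU, I_C)$ and $\Hom(\cQ^\vee, I_C)$ to the splitting types recorded in Lemma \ref{splitting_type}, using the ideal sheaf sequence $0 \to I_C \to \oh_X \to \oh_C \to 0$ together with the structure of $\cU$ and $\cQ$ on $X$. First I would twist this sequence by $\cU^\vee$ (respectively $\cQ$) and take cohomology, obtaining exact sequences
\[
0 \to H^0(X, \cU^\vee \otimes I_C) \to H^0(X, \cU^\vee) \to H^0(C, \cU^\vee|_C) \to H^1(X, \cU^\vee \otimes I_C) \to \cdots
\]
and similarly with $\cQ$ in place of $\cU^\vee$. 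Since $\Hom(\cU, I_C) = H^0(X, \cU^\vee \otimes I_C)$ and $\Hom(\cQ^\vee, I_C) = H^0(X, \cQ \otimes I_C)$, the computation reduces to understanding the restriction maps $H^0(X,\cU^\vee) \to H^0(C,\cU^\vee|_C)$ and $H^0(X,\cQ) \to H^0(C,\cQ|_C)$, for which I would use the known cohomology of $\cU^\vee$ and $\cQ$ on a GM fourfold (e.g. $h^0(X,\cU^\vee) = 5$, $h^0(X,\cQ) = 5$, with all higher cohomology vanishing, which follows from the corresponding statements on $\Gr(2,V_5)$ via the resolution of $X$ as a complete intersection). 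The target spaces are computed from Lemma \ref{splitting_type}: for instance a $\tau$-conic has $\cU^\vee|_C \cong \oh_C(1)^{\oplus 2}$ so $h^0(C,\cU^\vee|_C) = 4$, while a $\sigma$-conic has $\cU^\vee|_C \cong \oh_C(2)\oplus\oh_C$ with $h^0 = 4$, and a $\rho$-conic has $\cU^\vee|_C \cong \oh_C(1)^{\oplus 2}$ with $h^0 = 4$ as well.

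The key geometric input is a rank count for the restriction maps. Concretely, $H^0(X,\cU^\vee) = V_5^\vee$ and the restriction to $C$ factors through evaluation at the $2$-plane $\langle C\rangle$; the kernel $\Hom(\cU,I_C) = H^0(X,\cU^\vee\otimes I_C)$ measures the linear forms on $V_5$ vanishing on the subspace of $V_5$ swept out by the lines parametrized by $C$. For a $\tau$-conic this swept-out subspace is all of $V_5$ except a hyperplane's worth of constraint, giving a $1$-dimensional kernel; for a $\sigma$-conic all lines pass through a common point $[V_1]$, which is an extra linear condition that does not change the rank of the $\cU^\vee$-evaluation but forces the $\cQ^\vee$-evaluation to drop, so $\Hom(\cU,I_C)=k$ and $\Hom(\cQ^\vee,I_C)=k$; for a $\rho$-conic all lines lie in a common $\PP(\wedge^2 V_3)$, which is a stronger constraint on the $\cU^\vee$ side producing a $2$-dimensional kernel, while $\Hom(\cQ^\vee,I_C)=0$ because the quotient directions fill up $V_5/\langle C\rangle$. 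I would carry out each of the three cases by writing down the evaluation map explicitly in coordinates adapted to the plane $\langle C\rangle$ and its position relative to $\Gr(2,V_5)$, and reading off the rank; the Euler sequence $0 \to \cU \to V_5 \otimes \oh_X \to \cQ \to 0$ links the two computations and lets one deduce the $\cQ^\vee$ statements from the $\cU$ ones plus the class of $C$.

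The main obstacle I anticipate is controlling the higher cohomology groups $H^1(X,\cU^\vee\otimes I_C)$ (and its $\cQ$-analogue) well enough to conclude that the restriction map has exactly the expected rank rather than merely the expected kernel dimension as a lower bound — in other words, ruling out that the evaluation map fails to be surjective for spurious reasons. For this I would either invoke semicontinuity together with the fact that for a generic conic of each type the answer can be checked on an explicit model, or argue directly that $\cU^\vee\otimes I_C$ and $\cQ\otimes I_C$ have no unexpected first cohomology by resolving $I_C$ via the Koszul-type complex of the plane $\langle C\rangle\cap X$ inside $X$. Once the ranks are pinned down in one case, the other two follow from the same bookkeeping, so the crux is really the first honest rank computation.
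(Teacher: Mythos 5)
Your proposal is correct and at its core follows the same route as the paper: both arguments come down to identifying $\Hom(\cU,I_C)=H^0(X,\cU^\vee\otimes I_C)$ with the linear forms on $V_5$ whose associated sub-Grassmannian $\Gr(2,V_4)$ contains $C$, and $\Hom(\cQ^\vee,I_C)$ with the vectors $v\in V_5$ whose $\sigma$-$3$-plane $\mathbb{P}(v\wedge V_5)$ contains $C$, and then reading off the count from Definition~\ref{classification_of_conics}. One remark: the obstacle you anticipate about controlling $H^1(X,\cU^\vee\otimes I_C)$ is a red herring --- $\Hom(\cU,I_C)$ is by definition the kernel of the restriction map $H^0(X,\cU^\vee)\to H^0(C,\cU^\vee|_C)$ irrespective of any higher cohomology, so neither the surjectivity of that map nor the splitting types of Lemma~\ref{splitting_type} are actually needed, and the paper's proof works purely with the zero-locus interpretation of sections of $\cU^\vee$ and $\cQ$.
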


\begin{proof}
Note that for a conic $C$, if $\Hom(\cU, I_C)=k^a$, for some integer $a\geq 0$, then $C$ is contained in $\mathrm{Gr}(2, V_{5-a})\cap X$ by the discussion above. Since for any conic $C$, there is some $V_4$ such that $C$ lies in $\mathrm{Gr}(2,V_4)$, then we have $\hom(\cU, I_C)\geq 1$. Now if $\hom(\cU, I_C)\geq 2$, we know that $C$ is contained in a $\rho$-plane $\mathrm{Gr}(2,V_3)$. For a $\tau$-conic $C$,  $\langle C\rangle$ is not contained in $\mathrm{Gr}(2,V_4)$ for any $V_4\subset V_5$ and a $\sigma$-conic $C$ generates a $\sigma$-plane $\mathbb{P}(V_1\wedge V_4)$. Thus for such two types of conics, we have $\Hom(\cU, I_C)=k$. For a $\rho$-conic $C$, since $\langle C \rangle$ is in the form $\mathrm{Gr}(2,V_3)$, we have $\hom(\cU, I_C)\geq 2$. But if $\hom(\cU, I_C)\geq 3$, we know that $C\subset\mathrm{Gr}(2,V_2)$, which is impossible. Hence for a $\rho$-conic $C$, we have $\Hom(\cU, I_C)=k^2$.

On the other hand, if $\Hom(\cQ^{\vee}, I_C)=k^b$ for an integer $b\geq0$, $C$ is contained in $\mathrm{Gr}(2-b, V_{5-b})\cap X$. Thus we have $\hom(\cQ^{\vee}, I_C)\leq 1$ for any conic $C$. It is easy to see $\hom(\cQ^{\vee}, I_C)=1$ if and only if $C$ is contained in the zero locus of a global section of $\cQ$, which is a $\sigma$-3-plane $\mathbb{P}(V_1\wedge V_5)$ of $\mathrm{Gr}(2, V_5)$. This implies that $\Hom(\cQ^{\vee}, I_C)=0$ for conics of type $\tau$ or $\rho$ and $\Hom(\cQ^{\vee}, I_C)=k$ for conics of type $\sigma$.
\end{proof}

Now we state two lemmas, which are useful in the following sections.

\begin{lemma} \label{coho-1}
Let $X$ be an ordinary GM fourfold, then we have

\begin{enumerate}
    \item $\RHom(\cU, \cQ^{\vee})=k[0]$.

    \item $\RHom(\cU^{\vee}, \cQ)=0$.
    
    \item $\RHom(\cU^{\vee}, \cU)=0$.
    
    \item $\RHom(\cQ^{\vee}, \cQ(-H))=k[-2]$.
    
    \item $\RHom(\cU^{\vee}, \cQ^{\vee})=k[-1]$.

    \item $\RHom(\cU, \cQ^{\vee}(H))=k^{46}[0]$.

    \item $\RHom(\oh_X, \cU^{\vee}(H))=k^{35}[0]$.
    
\end{enumerate}

\end{lemma}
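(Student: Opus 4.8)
The plan is to compute each of these five $\mathrm{RHom}$ groups directly on the Grassmannian $\mathrm{Gr}(2,V_5)$ and then transfer the computation to $X$ via the Gushel map and Koszul resolutions. Recall that $X = \mathrm{Gr}(2,V_5)\cap \mathbb{P}^8\cap Q$, so $X$ sits inside $M:=\mathrm{Gr}(2,V_5)\cap\mathbb{P}^8$ (a hyperplane section, assumed smooth for general $X$) as a quadric divisor, and $M$ sits inside $G:=\mathrm{Gr}(2,V_5)$ as a hyperplane section. The bundles $\cU$, $\cU^\vee$, $\cQ$, $\cQ^\vee$ on $X$ are pullbacks of the corresponding tautological bundles on $G$. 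So the strategy is: first compute $\mathrm{RHom}_G$ between the relevant bundles (and their twists by $\oh(-H)$, $\oh(-2H)$) using Borel--Weil--Bott on $G$; then use the two Koszul resolutions $0\to\oh_G(-H)\to\oh_G\to\oh_M\to 0$ and $0\to\oh_M(-2H)\to\oh_M\to\oh_X\to 0$ (equivalently, the length-two Koszul complex $0\to\oh_G(-3H)\to\oh_G(-H)\oplus\oh_G(-2H)\to\oh_G\to\oh_X\to 0$) to pass from $G$ to $X$, keeping track of the connecting maps.

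Concretely, for each item I would: (1) rewrite $\mathrm{RHom}_X(\mathcal{E},\mathcal{F}) = \mathrm{RHom}_G(\mathcal{E}, \mathcal{F}\otimes i_*\oh_X)$ where $i:X\hookrightarrow G$; (2) resolve $i_*\oh_X$ by the Koszul complex above; (3) reduce to computing $H^\bullet(G, \mathcal{E}^\vee\otimes\mathcal{F}\otimes\oh_G(-jH))$ for $j=0,1,2,3$, where the relevant sheaves $\mathcal{E}^\vee\otimes\mathcal{F}$ are among $\cU^\vee\otimes\cQ^\vee \cong \wedge^2\cU^\vee \oplus (\text{stuff})$, $\cU\otimes\cQ = \cU\otimes\cU^\perp$, $\cU\otimes\cU^\vee$, $\cQ\otimes\cQ$, $\cU\otimes\cQ^\vee$, each tensored with the various line-bundle twists; (4) compute those cohomology groups on $G$ via Bott's theorem for $\mathrm{GL}_5$ (recalling $\oh_G(1) = \wedge^2\cU^\vee = \det\cU^\vee$, $\cQ$ has rank $3$, and using the standard short exact sequence $0\to\cU\to V_5\otimes\oh\to\cQ\to 0$). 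For example, $\RHom(\cU^\vee,\cU)=0$ should follow because $\cU\otimes\cU^\vee$ and all its low twists are acyclic on $G$ except possibly for a contribution that cancels in the Koszul spectral sequence; $\RHom(\cU,\cQ^\vee)=k$ reflects the unique section giving the inclusion $\cU\hookrightarrow V_5\otimes\oh \twoheadrightarrow \cQ^\vee$-type map modulo identifying $\cQ^\vee\subset \wedge^2 V_5^\vee\otimes\oh$; and $\RHom(\cQ^\vee,\cQ(-H))=k[-2]$ is a Serre-duality-type statement — indeed on $X$, $\mathrm{RHom}_X(\cQ^\vee,\cQ(-H)) \cong \mathrm{RHom}_X(\cQ(-H),\cQ^\vee)^\vee[-4]$ since $K_X=-2H$, i.e. $\cong \mathrm{RHom}_X(\cQ^\vee\otimes\cQ^\vee, \oh(H))^\vee[-4]$ after adjusting, so one instead computes the dual group, which is $1$-dimensional in the right degree.

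The key steps in order: (i) set up the Koszul resolution of $\oh_X$ inside $G$ and the resulting hypercohomology spectral sequence $E_1^{p,q} = H^q(G, \mathcal{E}^\vee\otimes\mathcal{F}\otimes\Lambda^{-p}N^\vee) \Rightarrow \mathrm{Ext}^{p+q}_X(\mathcal{E},\mathcal{F})$ where $N^\vee = \oh(-H)\oplus\oh(-2H)$; (ii) run Bott's algorithm on $G=\mathrm{Gr}(2,5)$ to fill in the $E_1$-page for all five pairs $(\mathcal{E},\mathcal{F})$ simultaneously (most entries vanish); (iii) in the few cases where two adjacent nonzero entries survive, argue the connecting differential is an isomorphism (or zero) — typically forced by the expected answer being concentrated in one degree, or by identifying the differential with multiplication by the defining equation of $X$, which is injective on global sections of the relevant twist. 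Then read off the five answers. The main obstacle will be step (iii): the Borel--Weil--Bott computations themselves are routine but lengthy, and the genuine content is verifying that the Koszul differentials behave as claimed so that no unwanted extension classes or higher cohomology survives — in particular showing the differentials for item (4) land exactly to produce $k[-2]$ rather than, say, $k[-2]\oplus k[-3]$. I would handle this by pairing the spectral-sequence argument with the Serre-duality reformulations above as an independent cross-check on the degree and dimension of each answer.
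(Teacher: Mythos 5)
Your proposal is correct and is essentially identical to the paper's own proof, which likewise resolves $\oh_X$ by the Koszul complex $0\to\oh_{G}(-3)\to\oh_{G}(-1)\oplus\oh_{G}(-2)\to\oh_{G}\to\oh_X\to 0$ on $G=\mathrm{Gr}(2,V_5)$ and then applies Borel--Weil--Bott to the resulting twists of the homogeneous bundles. Your additional attention to the spectral-sequence differentials (and the Serre-duality cross-checks) only fills in details the paper leaves implicit.
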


\begin{proof}
The Koszul resolution of an ordinary GM fourfold is in the form
\[0\to \oh_{\mathrm{Gr}(2, V_5)}(-3)\to \oh_{\mathrm{Gr}(2, V_5)}(-2)\oplus \oh_{\mathrm{Gr}(2, V_5)}(-1)\to \oh_{\mathrm{Gr}(2, V_5)}\to \oh_X\to 0.\]
Then the result follows from this resolution and a standard computation applying the Borel--Weil--Bott theorem, see e.g.~\cite[(4.1.9), (4.1.12)]{weyman_2003}.
\end{proof}

\begin{lemma}  \label{coho-2}
Let $X$ be a general ordinary GM fourfold and $C\subset X$ be a conic. Then we have
\begin{enumerate}
    \item $\RHom(\oh_X(H), \oh_C)=k[-1]$.
    
    \item $\RHom(\cU^{\vee}, \oh_C)=0$ when $C$ 
 is of type $\tau$ or $\rho$.
  
  \item $\RHom(\cU^{\vee}, \oh_C)=k[0]\oplus k[-1]$ when $C$ is a $\sigma$-conic.
 
    \item $\RHom(\cU^{\vee}(H), \oh_C)=k^4[-1]$.
    
    \item $\RHom(\cQ^{\vee}(H), \oh_C)=k[-1]$ when $C$ is of type $\tau$ or $\sigma$.
    
    \item $\RHom(\cQ^{\vee}(H), \oh_C)=k[0]\oplus k^2[-1]$ when $C$ is of type $\rho$.
\end{enumerate}

\end{lemma}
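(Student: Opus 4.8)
The plan is to compute each $\RHom$ group directly from a short exact sequence expressing $\oh_C$ (or rather $\oh_C(-H)$, after twisting) in terms of bundles we understand, and then invoke Lemma~\ref{coho-1} together with the splitting types recorded in Lemma~\ref{splitting_type}. Concretely, for a conic $C\subset X$ I would twist all the $\RHom$'s to reduce to computing $\RHom(\cU^\vee,\oh_C)$, $\RHom(\cQ^\vee,\oh_C)$ and $\RHom(\oh_X,\oh_C(H))$, using $\RHom(\cF(H),\oh_C)=\RHom(\cF,\oh_C(-H))$ and $\oh_C(-H)\cong\oh_C(-2)$ under the plane conic's natural parametrization (so $H^0(\oh_C(-H))=H^1(\oh_C(-H))=0$ and $H^0(\oh_C)=H^1(\oh_C)=k$, or rather $\chi(\oh_C(mH))=2m+1$). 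Part (1) is then immediate: $\RHom(\oh_X(H),\oh_C)=\RHom(\oh_X,\oh_C(-H))=R\Gamma(\oh_C(-2))=k[-1]$ since $h^0=0$, $h^1=1$.

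For parts (2), (3) I would restrict $\cU^\vee$ to $C$ using Lemma~\ref{splitting_type}: $\RHom(\cU^\vee,\oh_C)=R\Gamma(C,(\cU^\vee|_C)^\vee)=R\Gamma(C,\cU|_C)$. When $C$ is a $\tau$- or $\rho$-conic, $\cU|_C\cong\oh_C(-1)^{\oplus2}$ has no cohomology, giving $0$; when $C$ is a $\sigma$-conic, $\cU|_C\cong\oh_C(-2)\oplus\oh_C$ contributes $k$ in degree $0$ and $k$ in degree $1$, giving $k\oplus k[-1]$. For part (4), write $\cU^\vee(H)$ and note $\RHom(\cU^\vee(H),\oh_C)=R\Gamma(C,\cU|_C\otimes\oh_C(-H))=R\Gamma(C,\cU|_C(-2))$; in every case $\cU|_C(-2)$ is a sum of two line bundles of degrees $-3,-3$ ($\tau,\rho$) or $-4,-2$ ($\sigma$), each with $h^0=0$ and total $h^1=4$, yielding $k^4[-1]$ uniformly. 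Parts (5), (6) are the analogous computation for $\cQ$: $\RHom(\cQ^\vee(H),\oh_C)=R\Gamma(C,\cQ|_C(-2))$, and reading off $\cQ|_C$ from Lemma~\ref{splitting_type} gives, for $\tau$ and $\sigma$, $\cQ|_C(-2)\cong\oh_C(-1)^{\oplus2}\oplus\oh_C(-2)$ with $h^0=0$, $h^1=1$, hence $k[-1]$; for $\rho$, $\cQ|_C(-2)\cong\oh_C\oplus\oh_C(-2)^{\oplus2}$ with $h^0=1$, $h^1=2$, hence $k\oplus k^2[-1]$.

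The only genuine subtlety is that "$\RHom_X(\cF,\oh_C)=R\Gamma(C,\cF^\vee|_C)$" requires an argument, since $C$ has codimension $3$ in $X$ and one must control the higher $\Ext$ sheaves. The clean way is Grothendieck–Verdier duality on the (lci, hence Gorenstein) closed embedding $i\colon C\hookrightarrow X$: $\RHom_X(\cF,i_*\oh_C)\cong\RHom_C(Li^*\cF\otimes\omega_{C/X}[-3],\oh_C)^\vee$-type manipulations collapse, because $\cF$ is locally free, to $i_*R\sheafhom_C(i^*\cF,\omega_{C/X})[-3]$, and then $R\Gamma$ of this against the normal bundle determinant is what produces the degree shift. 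I expect this duality bookkeeping — keeping track of $\omega_{C/X}\cong\det N_{C/X}$ and the resulting cohomological degree — to be the main technical point; everything downstream is Borel–Weil–Bott on $C\cong\PP^1$ (or a degenerate conic, where one argues by flatness/semicontinuity from the smooth case since $F_g(X)$ is irreducible and the generic conic is smooth). Alternatively, and perhaps more in the spirit of Lemma~\ref{coho-1}, one can avoid duality entirely: resolve $\oh_C$ on $X$ by a locally free complex (e.g. via the ideal sheaf $I_C$ and the Koszul-type resolution attached to the $2$-dimensional space of sections of appropriate bundles cutting out $C$), apply $\RHom(\cF,-)$ termwise, and reduce to the groups in Lemma~\ref{coho-1}; I would present whichever is shorter after checking the resolution is uniform across conic types.
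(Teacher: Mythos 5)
Your computation for smooth conics is correct and is essentially what the paper does: restrict to $C\cong\PP^1$ and read off the cohomology from the splitting types in Lemma~\ref{splitting_type}. One remark before the real issue: the ``genuine subtlety'' you flag is not one. Since the source $\cF\in\{\oh_X(H),\cU^\vee,\cU^\vee(H),\cQ^\vee(H)\}$ is locally free, $R\sheafhom_X(\cF,i_*\oh_C)\cong\cF^\vee\otimes i_*\oh_C\cong i_*(\cF^\vee|_C)$ by the projection formula, with no higher local Ext sheaves and no Grothendieck--Verdier duality needed; the codimension of $C$ is irrelevant for $\RHom$ \emph{out of} a locally free sheaf (it would matter for $\RHom(\oh_C,\cF)$, which is not what is being computed). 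The duality bookkeeping you sketch is aimed at the wrong variance and can simply be dropped.

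The genuine gap is your treatment of non-smooth conics. Upper semicontinuity runs in the wrong direction: $h^i$ can only jump \emph{up} at special members of a flat family, so knowing, say, $\RHom(\cU^\vee,\oh_C)=0$ for the generic $\tau$-conic gives no control at a reducible or non-reduced member --- both $h^0$ and $h^1$ could jump by the same amount while preserving $\chi$. (Moreover, Lemma~\ref{splitting_type} is stated only for smooth conics, so for degenerate ones there is no splitting type to restrict to.) The paper closes this case by an explicit computation: for $C=l_1\cup l_2$ it uses the sequence $0\to\oh_C\to\oh_{l_1}\oplus\oh_{l_2}\to\oh_x\to0$, and for a double line $0\to\oh_l(-H)\to\oh_C\to\oh_l\to0$, then tensors with the relevant bundle, takes the long exact sequence, and identifies the connecting maps using Lemma~\ref{classify-conic}; the splitting of $\cU$ and $\cQ$ on a line in $\mathrm{Gr}(2,V_5)$ is uniform, so the only input needed is the rank of the evaluation map at the singular point, which is exactly what $\hom(\cU,I_C)$ and $\hom(\cQ^\vee,I_C)$ detect. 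You would need to supply this argument (or a genuine substitute) for your proof to cover all conics, not just the smooth ones.
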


\begin{proof}
When $C$ is smooth, the result follows from $C\cong \mathbb{P}^1$ and Lemma \ref{splitting_type}. 

When $C$ is not smooth but reduced, we have an exact sequence
\[0\to \oh_C\to \oh_{l_1}\oplus \oh_{l_2}\to \oh_x\to 0,\]
where $l_i$ are lines such that $l_1\cup l_2=C$, $l_1\cap l_2=x$. When $C$ is non-reduced, we have
\[0\to \oh_l(-H)\to \oh_C\to \oh_l\to 0,\]
where $l=C_{red}$ is a line. Then the results follow from applying the Hom-functor to two exact sequences above, taking long exact sequences and applying Lemma \ref{classify-conic}.
\end{proof}

Finally, we introduce a surface $q$, which is called a \emph{$\sigma$-quadric surface}. We follow \cite[Section 3]{debarre2015special}. 

Recall that $X=\mathrm{Gr}(2,V_5)\cap \PP^8\cap Q$. The hyperplane $\PP^8$ is defined by a nonzero skew-symmetric form $\omega$ on $V_5$. Then the $\sigma$-quadric surface $q$ is defined by a $\sigma$-3-plane $\mathbb{P}(V_1^\omega\wedge V_5)$ in $\mathrm{Gr}(2,V_5)$ intersecting with $X$, where $V_1^\omega\subset V_5$ is the kernel of the nonzero skew-symmetric form $\omega$ on $V_5$. Indeed, $\langle q\rangle=\mathbb{P}(V_1^\omega\wedge V_5)$, i.e.~$q$ is the zero locus of a section of $\cQ$.
Moreover, when $X$ is general, $q$ is smooth and is the only quadric surface contained in $X$ due to \cite[Remark 2.2]{perry2019stability} and  \cite{debarre2015special} .

\begin{proposition}\label{sequence_of_Iq}
   Let $I_q$ be the ideal sheaf of the $\sigma$-quadric $q$. Then there is an exact sequence
   $$0\to\cU\to\cQ^{\vee}\to I_q\to0.$$
\end{proposition}

\begin{proof}
By definition, $q$ is the zero locus of a section of $\cQ$, which corresponds to a surjective map $\pi_q\colon\cQ^{\vee}\twoheadrightarrow I_q$.

It is left to show that $\ker(\pi_q)\cong\cU$. 
Since $\cU$ is slope-stable, we only need to prove that $\ker(\pi_q)$ is slope-stable as well and $\Hom(\cU,\ker(\pi_q))\neq 0$. If $\ker(\pi_q)$ is not stable, we choose a destabilizing sheaf $D$ of $\ker(\pi_q)$. Then we have $\mu(\cQ^{\vee})=-\frac{1}{3}>\mu(D)\geq\mu(K)=-\frac{1}{2}$. While $\Pic(X)=\mathbb{Z}.H$ and $\rk(D)=1$, this can not happen. Thus $\ker(\pi_q)$ is slope-stable. Then we apply $\Hom(\cU,-)$ to the exact sequence
$$0\to \ker(\pi_q)\to \cQ^{\vee}\to I_q\to 0.$$
By Lemma~\ref{coho-1}, $\Hom(\cU,\cQ^\vee)= k$. At the same time, $\Hom(\cU,I_q)=0$, otherwise $q$ is contained in the zero locus of a section of $\cU^{\vee}$, which is $\Gr(2,V_4)\cap H\cap Q$ for some $V_4\subset V_5$ and can not happen. Then we get $\Hom(\cU, \ker(\pi_q))= k$ as desired.
\end{proof}

\section{Projection objects of conics}\label{projection_objects}

Recall that $\mathrm{pr}_1:=\bL_{\oh_X}\bL_{\mathcal{U}^{\vee}}\bL_{\oh_X(H)}\bL_{\mathcal{U}^{\vee}(H)}$ and $\mathrm{pr}_2:=\bR_{\cU}\bR_{\oh_X(-H)}\bL_{\oh_X}\bL_{\mathcal{U}^{\vee}}$. In this section, we find out the objects $\pr_1(\oh_C(H))$ and $\pr_2(I_C)$ for any conic $C$ on a general ordinary GM fourfold, then we relate them via the involution $T$ defined in Section~\ref{semi-orthogonal_GM}.

The characters $\mathrm{ch}(\mathrm{pr}_1(\oh_C(H)))$ and $\ch(\pr_2(I_C))$ of a conic $C$ are $\Lambda_1$. We start with two lemmas.

\begin{lemma} \label{pr2-3fold}
Let $X$ be an ordinary GM fourfold and $C$ be a conic on $X$. Let $j:Y\hookrightarrow X$ be any hyperplane section containing $C$. Then we have
\[\pr_2(I_C)\cong\pr_2(I_{C/Y}).\]
\end{lemma}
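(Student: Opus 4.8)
The plan is to fit $I_C$ and the pushforward $j_*I_{C/Y}$ into a short exact sequence of sheaves on $X$ whose remaining term is annihilated by $\pr_2$. Since $Y\subset X$ is a hyperplane section, its ideal sheaf is $I_{Y/X}\cong\oh_X(-H)$, and because $C$ is contained in $Y$ scheme-theoretically (any hyperplane in $\mathbb{P}^9$ through the plane spanned by $C$ cuts out such a $Y$), the inclusion of ideals $I_{Y/X}\subseteq I_{C/X}$ gives a short exact sequence
\[
0\longrightarrow \oh_X(-H)\longrightarrow I_C\longrightarrow j_*I_{C/Y}\longrightarrow 0 .
\]
The first thing I would do is justify that the cokernel here is indeed $j_*I_{C/Y}$; this is a local computation adapted to the Cartier divisor $Y$.

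Next I would record that $\pr_2$, being by construction the projection onto $\Ku(X)$ for the decomposition $\D^b(X)=\langle\oh_X(-H),\cU,\Ku(X),\oh_X,\cU^{\vee}\rangle$, kills every object of the admissible subcategory $\langle\oh_X(-H),\cU\rangle$; in particular $\pr_2(\oh_X(-H))=0$. If one prefers to see this directly from $\pr_2=\bR_{\cU}\bR_{\oh_X(-H)}\bL_{\oh_X}\bL_{\cU^{\vee}}$: semiorthogonality gives $\RHom(\cU^{\vee},\oh_X(-H))=0$, and $\RHom(\oh_X,\oh_X(-H))=0$ follows from Kodaira vanishing together with $\omega_X\cong\oh_X(-2H)$, so the first two mutations leave $\oh_X(-H)$ unchanged and $\bR_{\oh_X(-H)}$ then sends it to $0$. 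Applying the triangulated functor $\pr_2$ to the distinguished triangle $\oh_X(-H)\to I_C\to j_*I_{C/Y}\to\oh_X(-H)[1]$ and using $\pr_2(\oh_X(-H))=0$ yields that the induced morphism $\pr_2(I_C)\to\pr_2(j_*I_{C/Y})$ is an isomorphism, which is the assertion.

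I do not expect a genuine obstacle here. The only step requiring a little care is the identification of $I_{C/X}/\oh_X(-H)$ with $j_*I_{C/Y}$; this reduces to the local statement that near a point of $Y$ the ideal of $C$ in $\oh_X$ is generated by a local equation $t$ for $Y$ together with lifts of generators of the ideal of $C$ in $\oh_Y=\oh_X/(t)$, which holds because $Y$ is Cartier and contains $C$. Everything else is formal once $\pr_2(\oh_X(-H))=0$ is in hand.
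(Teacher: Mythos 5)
Your proposal is correct and follows essentially the same route as the paper: both use the short exact sequence $0\to\oh_X(-H)\to I_C\to I_{C/Y}\to 0$ together with the vanishing $\pr_2(\oh_X(-H))=0$, which the paper likewise deduces from $\bL_{\oh_X}\bL_{\cU^{\vee}}(\oh_X(-H))\cong\oh_X(-H)$ and the fact that $\bR_{\oh_X(-H)}$ annihilates $\oh_X(-H)$. Your extra care in identifying the cokernel with $j_*I_{C/Y}$ and in justifying the two left mutations is sound but not a different argument.
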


\begin{proof}
Note that we have an exact sequence
\[0\to \oh_X(-H)\to I_C\to I_{C/Y}\to 0.\]
Since $\bL_{\oh_X}\bL_{\cU^{\vee}}(\oh_X(-H))=\oh_X(-H)$, thus we have $$\pr_2(\oh_X(-H))=\bR_{\cU}\bR_{\oh_X(-H)}\bL_{\oh_X}\bL_{\mathcal{U}^{\vee}}(\oh_X(-H))=0.$$ 
Therefore we obtain $\pr_2(I_C)\cong\pr_2(I_{C/Y})$.
\end{proof}

Recall that by dualizing the exact triangle defining the mutation functor, we have $\mathbb{D}(\bL_{E})\cong \bR_{\mathbb{D}(E)}\circ \mathbb{D}$ and $\mathbb{D}(\bR_{E})\cong \bL_{\mathbb{D}(E)}\circ \mathbb{D}$ for any object $E$.

\begin{lemma} \label{T-action}
Let $X$ be a general ordinary GM fourfold and $C$ be a conic on $X$. Then we have
\[T(\pr_2(I_C))\cong \pr_1(\oh_C(H))[-2].\]
\end{lemma}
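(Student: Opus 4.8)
The plan is to relate $\mathbb{D}(I_C)$ to $\oh_C(H)$ by Grothendieck duality, push this relation through both projection functors, and then compare the two projections by mutation identities. First I would record a triangle: any conic $C\subset X$ is a Gorenstein curve with $\omega_C\cong\oh_C(-H)$ (it is a degree $2$, arithmetic genus $0$ curve, and this pins down its dualizing line bundle in all three cases --- smooth, nodal, ribbon), so with $\omega_X\cong\oh_X(-2H)$ Grothendieck duality gives
$$\mathbb{D}(\oh_C)=R\sheafhom_X(\oh_C,\oh_X)\cong\omega_C\otimes\omega_X^{\vee}|_C[-3]\cong\oh_C(H)[-3].$$
Applying $\mathbb{D}$ to $0\to I_C\to\oh_X\to\oh_C\to 0$ and using $\mathbb{D}(\oh_X)=\oh_X$ then yields the triangle
$$\oh_X\longrightarrow\mathbb{D}(I_C)\longrightarrow\oh_C(H)[-2]\xrightarrow{+1}.$$

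Next I would dualize $\pr_2$. From the defining triangles of the mutation functors together with $\RHom(A,B)\cong\RHom(\mathbb{D}B,\mathbb{D}A)$ one obtains $\mathbb{D}\circ\bL_E\cong\bR_{\mathbb{D}E}\circ\mathbb{D}$ and $\mathbb{D}\circ\bR_E\cong\bL_{\mathbb{D}E}\circ\mathbb{D}$ for an exceptional object $E$. Since $\mathbb{D}\cU=\cU^{\vee}$, $\mathbb{D}\cU^{\vee}=\cU$ and $\mathbb{D}(\oh_X(kH))=\oh_X(-kH)$, feeding $\pr_2=\bR_{\cU}\bR_{\oh_X(-H)}\bL_{\oh_X}\bL_{\cU^{\vee}}$ into these identities gives
$$T\circ\pr_2=\bL_{\oh_X}\circ\mathbb{D}\circ\pr_2\cong\bL_{\oh_X}\bL_{\cU^{\vee}}\bL_{\oh_X(H)}\bR_{\oh_X}\bR_{\cU}\circ\mathbb{D}=:\Psi\circ\mathbb{D}.$$
Because $\RHom(\oh_X,\cU)=0$ we have $\bR_{\cU}(\oh_X)=\oh_X$, hence $\bR_{\oh_X}\bR_{\cU}(\oh_X)=0$ and $\Psi(\oh_X)=0$. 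Applying the triangulated functor $\Psi$ to the triangle above therefore yields $T(\pr_2(I_C))=\Psi(\mathbb{D}(I_C))\cong\Psi(\oh_C(H))[-2]$, so the lemma is reduced to the identity $\Psi(\oh_C(H))\cong\pr_1(\oh_C(H))$, that is, to
$$\bL_{\oh_X}\bL_{\cU^{\vee}}\bL_{\oh_X(H)}\bigl(\bR_{\oh_X}\bR_{\cU}(\oh_C(H))\bigr)\;\cong\;\bL_{\oh_X}\bL_{\cU^{\vee}}\bL_{\oh_X(H)}\bigl(\bL_{\cU^{\vee}(H)}(\oh_C(H))\bigr).$$

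For this last step I would use the cohomology computations already at hand. Via Serre duality the groups $\RHom(\oh_C(H),\cU)$, $\RHom(\oh_C(H),\oh_X)$ and $\RHom(\cU^{\vee}(H),\oh_C(H))$ become cohomology of line bundles on $C$ (read off from Lemma \ref{splitting_type} when $C$ is smooth, and from the sequences in the proof of Lemma \ref{coho-2} in the nodal and ribbon cases), so both $\bR_{\oh_X}\bR_{\cU}(\oh_C(H))$ and $\bL_{\cU^{\vee}(H)}(\oh_C(H))$ are explicit cones on $\oh_C(H)$ built, respectively, out of $\cU$ and $\oh_X$, and out of $\cU^{\vee}(H)$. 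One then checks that $\bL_{\oh_X}\bL_{\cU^{\vee}}\bL_{\oh_X(H)}$ --- whose kernel is the triangulated subcategory generated by $\oh_X,\cU^{\vee},\oh_X(H)$, since these objects occur consecutively in the semiorthogonal decomposition --- carries the two cones to the same object: the error term produced by $\bR_{\oh_X}$ is a multiple of $\oh_X$ and disappears at once, but the error terms involving $\cU$ and $\cU^{\vee}(H)$ are \emph{not} annihilated by this functor, so the two sides must be genuinely matched, if necessary by treating the types $\tau$, $\sigma$, $\rho$ of Definition \ref{classification_of_conics} separately.

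I expect this final comparison to be the only real obstacle. Everything up to $T(\pr_2(I_C))\cong\Psi(\oh_C(H))[-2]$ is formal --- a consequence of Grothendieck duality and of how $\mathbb{D}$ intertwines left and right mutations --- whereas identifying $\Psi(\oh_C(H))$ with $\pr_1(\oh_C(H))$ forces one to control several mutations of the non-locally-free sheaf $\oh_C(H)$ at once, and this is where the geometry of the three kinds of conics re-enters.
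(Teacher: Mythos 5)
Your proposal follows essentially the same route as the paper: both establish $\mathbb{D}(\oh_C)\cong\oh_C(H)[-3]$ (your uniform Gorenstein-duality argument replaces the paper's case-by-case check), use the intertwining of $\mathbb{D}$ with left and right mutations to rewrite $T\circ\pr_2$ as $\bL_{\oh_X}\bL_{\cU^{\vee}}\bL_{\oh_X(H)}\bR_{\oh_X}\bR_{\cU}\circ\mathbb{D}$, and reduce the lemma to comparing this composition on $\oh_C(H)$ with $\pr_1(\oh_C(H))$. The final comparison you flag as the only real obstacle is in fact immediate for $\tau$- and $\rho$-conics, since $\RHom(\cU^{\vee},\oh_C)=0$ (Lemma \ref{coho-2}) makes both the $\bR_{\cU}$- and $\bL_{\cU^{\vee}(H)}$-error terms vanish outright --- which is precisely how the paper concludes --- and only the $\sigma$-case requires the genuine matching you describe, a computation the paper likewise omits.
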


\begin{proof}
By definition, we have
\[T(\pr_2(I_C))=\bL_{\oh_X}\mathbb{D}(\bR_{\cU}\bR_{\oh_X(-H)}\bL_{\oh_X}\bL_{\mathcal{U}^{\vee}}I_C).\]
Since $\mathbb{D}\circ (\bR_{\cU}\bR_{\oh_X(-H)}\bL_{\oh_X}\bL_{\mathcal{U}^{\vee}})=\bL_{\cU^{\vee}}\bL_{\oh_X(H)}\bR_{\oh_X}\bR_{\cU}\circ \mathbb{D}$, we get 
\[T(\pr_2(I_C))=\bL_{\oh_X}\bL_{\cU^{\vee}}\bL_{\oh_X(H)}\bR_{\oh_X}\bR_{\cU}(\mathbb{D}(I_C)).\]
Applying $\bR_{\oh_X}\bR_{\cU}(\mathbb{D}(-))$ to the standard exact sequence of $C$, we obtain 
\[\bR_{\oh_X}\bR_{\cU}(\mathbb{D}(I_C))\cong \bR_{\oh_X}\bR_{\cU}(\mathbb{D}(\oh_C))[1],\]
which means
\[T(\pr_2(I_C))\cong\bL_{\oh_X}\bL_{\cU^{\vee}}\bL_{\oh_X(H)}\bR_{\oh_X}\bR_{\cU}(\mathbb{D}(\oh_C))[1].\]

Firstly, we deal with $\tau$-conics and $\rho$-conics. According to Lemma \ref{coho-2}(2), we have $$\bL_{\oh_X(H)}\bL_{\cU^{\vee}(H)}(\oh_C(H))=I_C(H)[1],$$
\[\bR_{\oh_X}\bR_{\cU}(\oh_C(H))=\bR_{\oh_X}(\oh_C(H))\]
and a triangle 
\[\bR_{\oh_X}\bR_{\cU}(\oh_C(H))\to \oh_C(H)\to \oh_X[3].\]
Applying $\bL_{\oh_X}\bL_{\cU^{\vee}}\bL_{\oh_X(H)}$ to this triangle, we obtain that
\[\bL_{\oh_X}\bL_{\cU^{\vee}}\bL_{\oh_X(H)}\bR_{\oh_X}\bR_{\cU}(\oh_C(H))\cong\bL_{\oh_X}\bL_{\cU^{\vee}}\bL_{\oh_X(H)}(\oh_C(H))\cong\bL_{\oh_X}\bL_{\cU^{\vee}}(I_C(H)[1]).\]
The last complex is just $\pr_1(\oh_C(H))$. In order to establish $T(\pr_2(I_C))\cong \pr_1(\oh_C(H))[-2]$, we only need to show that $\mathbb{D}(\oh_C)\cong\oh_C(H)[-3]$.
\begin{itemize}
    \item Let $C$ be a smooth conic of type $\tau$ or $\rho$. By Grothendieck--Verdier duality, we have $\mathbb{D}(\oh_C)\cong\oh_C(H)[-3]$.

\item Let $C$ be a non-smooth reduced conic of type $\tau$ or $\rho$. We have the exact sequence
\[0\to \oh_C\to \oh_{l_1}\oplus \oh_{l_2}\to \oh_x\to 0\]
where $l_i$ are lines such that $l_1\cup l_2=C$, $l_1\cap l_2=x$. By Grothendieck--Verdier duality, we have $\mathbb{D}(\oh_{l_i})\cong \oh_{l_i}[-3]$ and $\mathbb{D}(\oh_x)\cong \oh_x[-4]$. Thus applying $\mathbb{D}$ to the exact sequence above, we obtain a triangle
\[\oh_x[-4]\to (\oh_{l_1}\oplus \oh_{l_2})[-3]\to \mathbb{D}(\oh_C).\]
It means we have an exact sequence
\[0\to \oh_{l_1}\oplus \oh_{l_2}\to \mathbb{D}(\oh_C)[3]\to \oh_x\to 0,\]
then we obtain $\mathbb{D}(\oh_C)\cong\oh_C(H)[-3]$.

\item Let $C$ be a double line of type  $\tau$ or $\rho$. We have
\[0\to \oh_l(-H)\to \oh_C\to \oh_l\to 0,\]
where $l=C_{red}$ is a line. Applying $\mathbb{D}$ to this exact sequence, we have a triangle
\[\oh_l[-3]\to \mathbb{D}(\oh_C)\to \oh_l(H)[-3],\]
then we obtain $\mathbb{D}(\oh_C)\cong\oh_C(H)[-3]$.
\end{itemize}

When $C$ is a $\sigma$-conic, the computation is similar. We omit details here.
\end{proof}

Next, we compute projection objects of all three types of conics to the Kuznetsov component $\Ku(X)$.

\subsection{$\tau$-conic}

In this subsection, we compute the projection objects of $\tau$-conics.

\begin{proposition} \label{proj-tau}
Let $X$ be a general ordinary GM fourfold and $C$ be a $\tau$-conic on $X$. Then we have
\[\pr_1(\oh_C(H))\cong\bL_{\oh_X}(I_{C/\Sigma}(H))[1],\]
where $\Sigma$ is the zero locus of a section of $\cU^{\vee}$ containing $C$. Moreover, there is an exact sequence
\[0\to \cU^{\oplus 4}\to K_1\to \pr_2(I_C)\to I_C\to 0,\]
where $K_1:=\mathrm{cok}(\oh_X(-H)\hookrightarrow \cU^{\oplus 5})=\bR_{\cU}\oh_X(-H)[1]$.
\end{proposition}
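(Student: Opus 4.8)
The plan is to compute $\pr_1(\oh_C(H))$ and $\pr_2(I_C)$ directly by pushing $\oh_C(H)$ (resp. $I_C$) through the successive left/right mutations, using the cohomology computations of Lemmas~\ref{coho-1} and~\ref{coho-2} to control each step. For the first statement, I would start from $\oh_C(H)$ and apply $\bL_{\cU^{\vee}(H)}$: by Lemma~\ref{coho-2}(4), $\RHom(\cU^{\vee}(H),\oh_C(H))\cong\RHom(\cU^{\vee},\oh_C)\otimes$ (shift) and the relevant group is $k^4$ in degree $1$ (after twisting), so mutating through $\cU^{\vee}(H)$ replaces $\oh_C(H)$ by a complex expressed via an extension of $\oh_C(H)$ by a power of $\cU^{\vee}(H)$. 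Then apply $\bL_{\oh_X(H)}$: by Lemma~\ref{coho-2}(1), $\RHom(\oh_X(H),\oh_C)=k[-1]$, so this mutation introduces the ideal sheaf twist, turning the relevant part into $I_C(H)[1]$ — this is exactly the computation already invoked in the proof of Lemma~\ref{T-action}, where it is recorded that $\bL_{\oh_X(H)}\bL_{\cU^{\vee}(H)}(\oh_C(H))=I_C(H)[1]$ for $\tau$- and $\rho$-conics. The remaining two mutations are $\bL_{\cU^{\vee}}$ and $\bL_{\oh_X}$. The key point is to identify $\bL_{\cU^{\vee}}(I_C(H)[1])$ with $I_{C/\Sigma}(H)[1]$, where $\Sigma$ is the zero locus of the section of $\cU^{\vee}$ through $C$ (such a section exists and is essentially unique because, by Lemma~\ref{classify-conic}(1), $\Hom(\cU,I_C)=k$, equivalently $\Hom(\cU^{\vee},I_C\otimes\det\cU^{\vee})$ is one-dimensional — I would make this twist precise). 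Concretely, the section cuts out the Koszul-type sequence $0\to\oh_X(-H)\to\cU^{\vee}\to I_{\Sigma}(H)\to 0$ (using $\det\cU^{\vee}=\oh_X(H)$ on $X$), and comparing with $0\to I_{\Sigma}\to I_C\to I_{C/\Sigma}\to 0$ shows that mutating $I_C(H)$ through $\cU^{\vee}$ exactly removes the $I_{\Sigma}(H)$ contribution and leaves $I_{C/\Sigma}(H)$. Then $\pr_1(\oh_C(H))=\bL_{\oh_X}(I_{C/\Sigma}(H))[1]$, which is the asserted formula.

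For the four-term exact sequence, I would proceed via Lemma~\ref{T-action}, which gives $T(\pr_2(I_C))\cong\pr_1(\oh_C(H))[-2]$, together with the description of $T=\bL_{\oh_X}\circ\mathbb{D}$. Alternatively, and perhaps more cleanly, I would compute $\pr_2(I_C)$ directly: apply $\bL_{\cU^{\vee}}$ and $\bL_{\oh_X}$ to $I_C$ (here the relevant cohomology is that of $\RHom(\cU^{\vee},I_C)$ and $\RHom(\oh_X,I_C)$, both controlled by the standard sequence $0\to I_C\to\oh_X\to\oh_C\to 0$ and Lemma~\ref{coho-2}), then apply $\bR_{\oh_X(-H)}$ and $\bR_{\cU}$. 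The object $K_1:=\mathrm{cok}(\oh_X(-H)\hookrightarrow\cU^{\oplus 5})$ arises as $\bR_{\cU}\oh_X(-H)[1]$ because $\RHom(\cU,\oh_X(-H))$ is computed from the Koszul resolution of $X$ in $\Gr(2,V_5)$ and Borel–Weil–Bott (the same input as Lemma~\ref{coho-1}); the map $\oh_X(-H)\to\cU^{\oplus 5}$ is the one dual to evaluation, using $H^0(X,\cU^{\vee})\cong V_5^{\vee}$. Splicing the defining sequence of $K_1$ with the triangle coming from the $\bR_{\cU}$-mutation applied to the result of the first two mutations, and tracking that the $\oh_C$-contribution produces the $I_C$ on the right, yields the claimed complex $0\to\cU^{\oplus 4}\to K_1\to\pr_2(I_C)\to I_C\to 0$.

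The main obstacle I anticipate is bookkeeping the mutations through $\cU^{\vee}$ (resp. $\cU$) correctly — these are non-spherical, non-line-bundle objects, so the mutation triangles involve evaluation and co-evaluation maps whose cones must be identified geometrically rather than by a one-line Ext computation. In particular, the step identifying $\bL_{\cU^{\vee}}(I_C(H)[1])\cong I_{C/\Sigma}(H)[1]$ requires knowing not just the dimension $\hom(\cU^{\vee},I_C(H))$ but that the universal extension is precisely the twisted ideal sheaf of $\Sigma$, which rests on the geometry of the section (its zero locus $\Sigma$ is a smooth — or at worst mildly singular — surface containing $C$) and on $\det\cU|_X$. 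I would handle the non-smooth and non-reduced conics by the same dévissage as in Lemmas~\ref{coho-2} and~\ref{T-action} (the sequences $0\to\oh_C\to\oh_{l_1}\oplus\oh_{l_2}\to\oh_x\to 0$ and $0\to\oh_l(-H)\to\oh_C\to\oh_l\to 0$), checking that the formula is stable under these extensions. Everything else — the $\oh_X$- and $\oh_X(H)$-mutations, and the $\bR_{\oh_X(-H)}$-mutation — is routine given the cohomology tables already established.
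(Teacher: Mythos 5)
Your strategy is the same as the paper's — run $\oh_C(H)$ and $I_C$ through the mutations one step at a time and use the section of $\cU^{\vee}$ through $C$ to compare $I_\Sigma$, $I_C$ and $I_{C/\Sigma}$ — but the write-up contains twist errors and, more seriously, one genuine gap. On the bookkeeping side: the group controlling $\bL_{\cU^{\vee}(H)}(\oh_C(H))$ is $\RHom(\cU^{\vee}(H),\oh_C(H))\cong\RHom(\cU^{\vee},\oh_C)$, which is $0$ for a $\tau$-conic by Lemma~\ref{coho-2}(2); you instead cite the $k^4[-1]$ of Lemma~\ref{coho-2}(4), which is $\RHom(\cU^{\vee}(H),\oh_C)$ with the \emph{untwisted} $\oh_C$. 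So this first mutation acts trivially rather than introducing four copies of $\cU^{\vee}(H)$. Likewise your Koszul sequence should read $0\to\oh_X\to\cU^{\vee}\to I_{\Sigma}(H)\to 0$ (compare determinants: $\det\cU^{\vee}=\oh_X(H)$), and as a consequence $\bL_{\cU^{\vee}}(I_C(H))$ is \emph{not} isomorphic to $I_{C/\Sigma}(H)$: it is a two-term complex with $\cH^{-1}\cong\oh_X$ and $\cH^{0}\cong I_{C/\Sigma}(H)$, and the residual $\oh_X$ is only removed by the final $\bL_{\oh_X}$. Your end formula survives these slips, but the intermediate identifications as stated are false.

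The genuine gap is in the second statement. Applying $\bR_{\cU}\bR_{\oh_X(-H)}$ to $I_C$ and splicing, as you propose, only yields a five-term exact sequence
\[0\to\cH^{-1}(\pr_2(I_C))\to\cU^{\oplus 4}\xrightarrow{\;\alpha\;}K_1\to\cH^{0}(\pr_2(I_C))\to I_C\to 0,\]
and the claimed four-term sequence is equivalent to the injectivity of $\alpha$. This is precisely where the hypothesis that $C$ is a $\tau$-conic enters: the paper proves $\ker\alpha=0$ by a commutative-diagram chase that identifies $\ker\alpha$ with $\Hom(\cQ^{\vee}(H),\oh_C)$, which vanishes for $\tau$-conics but equals $k$ for $\rho$-conics — and indeed for a $\rho$-conic one has $\cH^{-1}(\pr_2(I_C))\cong\cU$ (Proposition~\ref{pr2-rho}), so the four-term sequence genuinely fails there. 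Your proposal treats this splicing as routine tracking of the $\oh_C$-contribution; without an argument for $\ker\alpha=0$ the second half of the proposition is not proved. (Your alternative route via Lemma~\ref{T-action} meets the same difficulty in disguise: recovering the cohomology sheaves of $\pr_2(I_C)$ from $\pr_1(\oh_C(H))$ requires controlling $\bL_{\oh_X}\circ\mathbb{D}$ applied to a two-term complex, which is no easier than the direct diagram chase.)
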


\begin{proof}
By definition of $\pr_1$, we have
\begin{align*}
\mathrm{pr}_1(\oh_C(H))&=\bL_{\oh_X}\bL_{\cU^{\vee}}\bL_{\oh_X(H)}\bL_{\cU^{\vee}(H)}(\oh_C(H))\\
&\cong\bL_{\oh_X}\bL_{\cU^{\vee}}\bL_{\oh_X(H)}(\oh_C(H))\\
&\cong\bL_{\oh_X}\bL_{\cU^{\vee}}(I_C(H))[1].
\end{align*}
The first isomorphism follows from Lemma \ref{coho-2}. The second isomorphism follows from the standard exact sequence associated with $C$. Next, 
we have an exact triangle 
$$\mathrm{RHom}(\cU^{\vee},I_C(H))\otimes\cU^{\vee}\rightarrow I_C(H)\rightarrow\bL_{\cU^{\vee}}(I_C(H)).$$
Note that $\mathrm{RHom}(\cU^{\vee},I_C(H))\cong\mathrm{RHom}(\cU,I_C)$. By Lemma \ref{classify-conic}, $\mathrm{RHom}(\cU,I_C)=k[0]$. Then the object $\bL_{\cU^{\vee}}(I_C(H))$ fits into the triangle
$$\cU^{\vee}\rightarrow I_C(H)\rightarrow\bL_{\cU^{\vee}}(I_C(H)).$$
The image of $\pi:\cU\rightarrow I_C$ is the ideal sheaf $I_{\Sigma}$, where $\Sigma$ is the zero locus of a section of $\cU^{\vee}$ containing $C$, which is a surface $\Sigma=\mathrm{Gr}(2,V_4)\cap X$ for some $V_4$. Then we have two short exact sequences
$$0\rightarrow\ker\pi\rightarrow\cU\rightarrow I_{\Sigma}\rightarrow 0$$ and $$0\rightarrow I_{\Sigma}\rightarrow I_C \rightarrow I_{C/\Sigma}\rightarrow 0.$$
Note that $\ker\pi$ is a rank one reflexive sheaf on $X$, hence is a line bundle. 
Thus we have $\ker\pi\cong \oh_X(-H)$ and there is an exact triangle
$$\oh_X(-H)[2]\rightarrow\bL_{\cU}(I_C)[1]\rightarrow I_{C/\Sigma}[1].$$
Tensoring with $\oh_X(H)$, we get
$$\oh_X[2]\rightarrow\bL_{\cU^{\vee}}(I_C(H))[1]\rightarrow I_{C/\Sigma}(H)[1].$$
Finally, applying $\bL_{\oh_X}$ to this triangle, we get 
$\mathrm{pr}_1(\oh_C(H))\cong\bL_{\oh_X}(I_{C/\Sigma}(H))[1]$.

Now we compute $\pr_2(I_C)$. Since $\RHom(\oh_X, I_C)=\RHom(\cU^{\vee},I_C)=0$ by Lemma \ref{coho-2}, we have
\[\pr_2(I_C)=\bR_{\cU}\bR_{\oh_X(-H)}(I_C).\]
Since we have $\RHom(I_C, \oh_X(-H))=k[-2]$ and $\RHom(I_C, \cU)=k^4[-2]$, then we obtain triangles
\[\bR_{\oh_X(-H)}(I_C)\to I_C\to \oh_X(-H)[2],\]
\[\bR_{\cU}(I_C)\to I_C\to \cU^{\oplus 4}[2]\]
and
\[\bR_{\cU}\bR_{\oh_X(-H)}I_C\to \bR_{\cU}I_C\to K_1[1],\]
where $K_1:=\mathrm{cok}(\oh_X(-H)\hookrightarrow \cU^{\oplus 5})=\bR_{\cU}\oh_X(-H)[1]$. Therefore, taking the long exact sequence of cohomology, we get
\begin{equation}\label{sequence of alpha}
0\to \cH^{-1}(\pr_2(I_C))\to \cU^{\oplus 4}\xra{\alpha} K_1 \to \cH^0(\pr_2(I_C))\to I_C \to 0.    
\end{equation}

\textbf{Claim.} We have $\ker(\alpha)=\cH^{-1}(\pr_2(I_C))=0$. Thus the sequence of (\ref{sequence of alpha}) becomes
\[0\to \cU^{\oplus 4}\xra{\alpha} K_1 \to \pr_2(I_C)\to I_C \to 0.\]

Now the rest of the proof aims to prove this claim. From the definition of $\alpha$, we have a commutative diagram
\[\begin{tikzcd}
	{\Hom(I_C[-2], \cU)^{\vee}\otimes \cU[1]} & {\bR_{\cU}I_C} & {I_C} \\
	{\Hom(\oh_X(-H), \cU)^{\vee}\otimes \cU[1]} & {\bR_{\cU}\oh_X(-H)[2]} & {\oh_X(-H)[2]}
	\arrow["v", from=1-3, to=2-3]
	\arrow[from=1-2, to=1-3]
	\arrow[from=2-2, to=2-3]
	\arrow[from=2-1, to=2-2]
	\arrow["{f_2}"', from=1-1, to=1-2]
	\arrow["{v'}", from=1-2, to=2-2]
	\arrow["{f_1}", from=1-1, to=2-1]
\end{tikzcd}.\]
Here $v'\circ f_2=\alpha[1]$ and rows are induced by the definition of functor $\bR_{\cU}$. Let $v$ be a non-zero element in $\Hom(I_C, \oh_X(-H)[2])=k$, $v'$ and $f_1$ be the morphisms induced by $v$ and the right mutation functor. To determine $f_1$, we only need to determine the natural map $f_3: \Hom(I_C[-2], \cU)^{\vee}\to \Hom(\oh_X(-H), \cU)^{\vee}$ induced by $v$ due to the fact $f_3\otimes \mathrm{id}_{\cU[1]}=f_1$. 

To this end, using Serre duality we see that $f_3\colon \Hom(I_C[-2], \cU)^{\vee}\to \Hom(\oh_X(-H), \cU)^{\vee}$ is actually the dual map of $f_4\colon \Hom(\oh_X(-H),\cU)\to \Hom(I_C[-2], \cU)$ induced by $v\colon I_C\to \oh_X(-H)[2]$. We claim that $f_3$ is injective, which implies $\ker(\alpha)=0$. Indeed, we only need to show $f_4$ is surjective, which is equivalent to show $\Hom(\bR_{\oh_X(-H)}(I_C), \cU)=0$ since $\bR_{\oh_X(-H)}(I_C)=\mathrm{cone}(v)[-1]$.

By Serre duality and adjunction of mutation functors, we have
\[\Hom(\bR_{\oh_X(-H)}(I_C), \cU)=\Ext^4(\cU^{\vee}(H),\bR_{\oh_X(-H)}(I_C))=\Ext^4(\bL_{\oh_X(-H)}\cU^{\vee}(H),I_C).\]
Note that by (7) of Lemma \ref{coho-1} and Serre duality, we see $\cU^{\vee}(2H)$ is regular in the sense of Castelnuovo--Mumford, i.e.~$H^i(\cU^{\vee}((2-i)H))=0$ for any $i\geq 1$. Hence $\cU^{\vee}(2H)$ is globally generated, which implies $\bL_{\oh_X(-H)}\cU^{\vee}(H)[-1]$ is a vector bundle. Moreover, we have $\Ext^i(\bL_{\oh_X(-H)}\cU^{\vee}(H), \oh_X)=0$ for $i\geq 2$ by applying $\Hom(-,\oh_X)$ to the exact triangle defining $\bL_{\oh_X(-H)}\cU^{\vee}(H)$. Then applying $\Hom(\bL_{\oh_X(-H)}\cU^{\vee}(H),-)$ to the exact sequence $0\to I_C\to \oh_X\to \oh_C\to 0$, we see
\[\Ext^4(\bL_{\oh_X(-H)}\cU^{\vee}(H),I_C)=\Ext^3(\bL_{\oh_X(-H)}\cU^{\vee}(H),\oh_C)=0.\]



\end{proof}

\subsection{Projection of $\cU$}\label{section_projection_of_tauto}

In this subsection, we find out the projection object of $\cU$.

\begin{proposition}\label{projection_tauto}
Let $X$ be a general ordinary GM fourfold. Then we have a triangle
\[\cU\to \pr_1(\cU)\to K_2[-1],\]
where $K_2:=\bL_{\oh_X}(I_q(H))[-1]$ is a $\mu$-stable reflexive sheaf, $q$ is the unique $\sigma$-quadric in $X$.
\end{proposition}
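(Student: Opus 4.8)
The goal is to compute $\pr_1(\cU)$. The plan is to unwind the definition $\pr_1=\bL_{\oh_X}\bL_{\cU^{\vee}}\bL_{\oh_X(H)}\bL_{\cU^{\vee}(H)}$ applied to $\cU$, mutation by mutation, using the cohomology computations of Lemma~\ref{coho-1}. First I would compute $\bL_{\cU^{\vee}(H)}(\cU)$: since $\RHom(\cU^{\vee}(H),\cU)=\RHom(\cU^{\vee},\cU(-H))$, I would show by Borel--Weil--Bott via the Koszul resolution (as in Lemma~\ref{coho-1}) that this vanishes, so $\bL_{\cU^{\vee}(H)}(\cU)\cong\cU$. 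Next, $\bL_{\oh_X(H)}(\cU)$: compute $\RHom(\oh_X(H),\cU)=H^\bullet(X,\cU(-H))$, which should again vanish by the same method, giving $\bL_{\oh_X(H)}(\cU)\cong\cU$. Then $\bL_{\cU^{\vee}}(\cU)$: here $\RHom(\cU^{\vee},\cU)=0$ by Lemma~\ref{coho-1}(3), so $\bL_{\cU^{\vee}}(\cU)\cong\cU$ as well. Hence $\pr_1(\cU)\cong\bL_{\oh_X}(\cU)$, which fits in the triangle
\[\RHom(\oh_X,\cU)\otimes\oh_X\to\cU\to\bL_{\oh_X}(\cU)=\pr_1(\cU).\]

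The next task is to identify $\RHom(\oh_X,\cU)=H^\bullet(X,\cU)$ and relate the resulting triangle to the claimed one $\cU\to\pr_1(\cU)\to K_2[-1]$ with $K_2=\bL_{\oh_X}(I_q(H))[-1]$. From the Koszul resolution one computes $H^\bullet(X,\cU)$; I expect it to be concentrated so that the triangle above rearranges into $\cU\to\pr_1(\cU)\to K_2[-1]$ once $K_2$ is recognized. To see where the $\sigma$-quadric $q$ enters, recall $q$ is the zero locus of a section $s$ of $\cQ$, equivalently the vanishing of a section of $\sheafhom(\cU,\oh_X)=\cU^{\vee}$ composed appropriately; more precisely there is a section of $\cQ^{\vee}(H)$ or of a related bundle cutting out $q$. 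I would use the Koszul-type exact sequence for $q\subset X$ coming from such a section, twist to land on $I_q(H)$, and apply $\bL_{\oh_X}$ to match $K_2[-1]=\bL_{\oh_X}(I_q(H))$. The bookkeeping linking $H^\bullet(X,\cU)$, the tautological sequence $0\to\cU\to V_5\otimes\oh_X\to\cQ\to0$, and the defining section of $q$ is the combinatorial core of the argument.

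It remains to prove that $K_2=\bL_{\oh_X}(I_q(H))[-1]$ is a $\mu$-stable reflexive sheaf. For reflexivity I would first check that $K_2$ is actually a sheaf (not a complex): from the triangle $\RHom(\oh_X,I_q(H))\otimes\oh_X\to I_q(H)\to\bL_{\oh_X}(I_q(H))$, compute $H^\bullet(X,I_q(H))$ using $0\to I_q(H)\to\oh_X(H)\to\oh_q(H)\to0$ and the fact that $q$ is a smooth quadric surface, then verify the connecting maps force $\bL_{\oh_X}(I_q(H))$ to sit in a single degree. Reflexivity should follow because $K_2$ is an extension/kernel built from reflexive (indeed locally free) sheaves with a torsion-free quotient, so one checks it satisfies Serre's $S_2$ condition, or identifies $K_2^{\vee\vee}$ directly. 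For $\mu$-stability I would compute $\rk K_2$ and $c_1(K_2)$ from the defining triangle, note $\mu(K_2)$, and rule out destabilizing subsheaves: any such would have to violate the cohomological constraints (e.g.\ $\Hom(\oh_X,K_2)=0$ or $\Hom(\oh_X(H)^{\oplus?},K_2)$ bounded) inherited from $\bL_{\oh_X}$, reducing stability to a short numerical check combined with the uniqueness of $q$ as the only quadric surface in $X$ (Section~\ref{classfication_conics_GM}). \textbf{The main obstacle} I anticipate is this last step: establishing $\mu$-stability of $K_2$ rigorously, since it requires controlling all potential destabilizing subsheaves rather than just computing slopes; the uniqueness of the $\sigma$-quadric and the vanishing properties coming from the left mutation by $\oh_X$ will be the key inputs that make it tractable.
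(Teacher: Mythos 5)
Your proof fails at the very first mutation, and the failure is fatal to the whole strategy. You claim $\RHom(\cU^{\vee}(H),\cU)=\RHom(\cU^{\vee},\cU(-H))=0$. This is false: since $\cU^{\vee}\cong\cU(H)$ and $\omega_X\cong\oh_X(-2H)$, Serre duality gives $\RHom(\cU^{\vee}(H),\cU)\cong\RHom(\cU,\cU)^{\vee}[-4]=k[-4]$ (concretely, $\cU\otimes\cU(-H)\supseteq\wedge^2\cU(-H)=\oh_X(-2H)=\omega_X$, which contributes $H^4=k$). Note that if all your claimed vanishings were true, then, since also $\RHom(\oh_X,\cU)=0$ by semiorthogonality, you would conclude $\pr_1(\cU)\cong\bL_{\oh_X}(\cU)\cong\cU$, forcing $K_2[-1]=0$ in the asserted triangle --- so the statement itself tells you that some orthogonality must fail. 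The nonzero class $\Ext^4(\cU^{\vee}(H),\cU)=k$ is precisely the seed of the entire second term: the paper's proof starts from the triangle $\cU^{\vee}(H)[-4]\to\cU\to\bL_{\cU^{\vee}(H)}\cU$ and pushes the left-hand term through the remaining mutations, using $\bL_{\oh_X}\cU^{\vee}\cong\cQ^{\vee}[1]$ (dual tautological sequence) to get $\cQ^{\vee}(H)[-3]$, then $\bL_{\cU}\cQ^{\vee}\cong I_q$ (the unique map $\cU\to\cQ^{\vee}$, from $\RHom(\cU,\cQ^{\vee})=k$ in Lemma~\ref{coho-1}, is injective with cokernel $I_q$ --- this is how the $\sigma$-quadric actually enters, not via a Koszul complex for a section of $\cQ$ as you guessed) to get $I_q(H)[-3]$, and finally $\bL_{\oh_X}$ to produce $K_2[-1]$. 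Your plan never generates this term, so it cannot reach the stated triangle.

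For the last part, your sketch of reflexivity and $\mu$-stability of $K_2$ is in the right spirit but more complicated than necessary: since $q=\langle q\rangle\cap X$ with $\langle q\rangle$ a $3$-plane in $\mathbb{P}^8$, the evaluation map $\oh_X^{\oplus 5}\to I_q(H)$ is surjective, so $K_2=\bL_{\oh_X}(I_q(H))[-1]$ is its kernel, hence automatically a sheaf and reflexive by \cite[Proposition 1.1]{har80}; $\mu$-stability then follows from the polystability of $\oh_X^{\oplus 5}$ together with $\RHom(\oh_X,K_2)=0$, with no need to enumerate destabilizing subsheaves. But this is moot until the first-step error is repaired.
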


\begin{proof}

We apply the first mutation $\bL_{\cU^{\vee}(H)}$ to $\cU$,
$$\mathrm{RHom}(\cU^{\vee}(H),\cU)\otimes\cU^{\vee}(H)\rightarrow\cU\rightarrow\bL_{\cU^{\vee}(H)}\cU.$$

By Serre duality and exceptionality of $\cU$, we have $\RHom(\cU^{\vee}(H), \cU)=k[-4]$. Then the triangle becomes
$$\cU^{\vee}(H)[-4]\rightarrow\cU\rightarrow\bL_{\cU^{\vee}(H)}\cU.$$
Applying $\bL_{\oh_X(H)}$ to this triangle, we get 
$$\bL_{\oh_X(H)}\cU^{\vee}(H)[-4]\rightarrow\bL_{\oh_X(H)}\cU\rightarrow\bL_{\oh_X(H)}\bL_{\cU^{\vee}(H)}\cU.$$
By Serre duality, $\RHom(\oh_X(H), \cU)=0$, so that $\bL_{\oh_X(H)}\cU\cong\cU$. Then we have the triangle
$$\cQ^{\vee}(H)[-3]\rightarrow\cU\rightarrow\bL_{\oh_X(H)}\bL_{\cU^{\vee}(H)}\cU.$$
Applying $\bL_{\cU^{\vee}}$, we obtain
$$(\bL_{\cU}\cQ^{\vee})(H)[-3]\rightarrow\bL_{\cU^{\vee}}\cU\rightarrow\bL_{\cU^{\vee}}\bL_{\oh_X(H)}\bL_{\cU^{\vee}(H)}\cU.$$

Since $\RHom(\cU^{\vee}, \cU)=0$ by Lemma \ref{coho-1}, we have the following triangle
$$(\bL_{\cU}\cQ^{\vee})(H)[-3]\rightarrow\cU\rightarrow\bL_{\cU^{\vee}}\bL_{\oh_X(H)}\bL_{\cU^{\vee}(H)}\cU.$$
Now from the fact $\RHom(\cU, \cQ^{\vee})=k[0]$, there is a triangle
$$\cU\rightarrow\cQ^{\vee}\rightarrow\bL_{\cU}\cQ^{\vee}.$$ 
Since $\cU$ and $\cQ^{\vee}$ are both $\mu$-stable with slopes $\mu(\cU)=-\frac{1}{2}$ and $\mu(\cQ^{\vee})=-\frac{1}{3}$, the map $\cU\xrightarrow{s}\cQ^{\vee}$ is injective and $\bL_{\cU}\cQ^{\vee}\cong\mathrm{cok}(s)\cong I_q$ by Proposition~\ref{sequence_of_Iq}. 
Then the triangle becomes 
$$I_q\otimes\oh_X(H)[-3]\rightarrow\cU\rightarrow\bL_{\cU^{\vee}}\bL_{\oh_X(H)}\bL_{\cU^{\vee}(H)}\cU.$$
Applying $\bL_{\oh_X}$, we get 
$$\bL_{\oh_X}(I_q\otimes\oh_X(H))[-3]\rightarrow\cU\rightarrow\pr_1(\cU).$$

As in Section~\ref{classification_of_conics}, $q=\langle q\rangle\cap X=\langle q\rangle\cap Q$, which means $q$ is cut out by five hyperplane sections of $\mathbb{P}^8$. This implies the morphism $t: \mathcal{O}_X^{\oplus5}\longrightarrow I_q(H)$ is surjective and $\bL_{\mathcal{O}_X}(I_q(H))[-1]\cong\ker(t)$, which is denoted by $K_2$. By \cite[Proposition 1.1]{har80}, $K_2$ is a reflexive sheaf. Finally, the stability of $K_2$ follows from the poly-stability of $\oh^{\oplus 5}_X$ and the fact $\RHom(\oh_X, K_2)=0$.
\end{proof}

\subsection{$\rho$-conic}

In this subsection, we compute the projection objects of $\rho$-conics. 

At first, we offer two lemmas which will be very useful in the proof of Proposition~\ref{pr2-rho}.

\begin{lemma} \label{no_sheaf_T}
Let $X$ be an ordinary GM fourfold and $F$ be a $\mu$-semistable sheaf on $X$ with $\rk(F)=3$, $\ch_1(F)=-H$ and $H\cdot \ch_2(F)=eL$. Then we have $e\leq -1$.
\end{lemma}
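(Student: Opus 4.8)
The plan is to pass to a general K3 linear section of $X$ and read the inequality off the numerics of sheaves on K3 surfaces. Pick general hyperplanes $H_1,H_2\in|H|$ and set $S:=X\cap H_1\cap H_2$; by Bertini $S$ is a smooth projective surface, and since $X$ has index two, adjunction gives $K_S=(K_X+2H)|_S=\oh_S$, so $S$ is a K3 surface with polarisation $H_S:=H|_S$ and $H_S^2=H^4=10$. For general $S$ the restriction $F|_S$ is torsion-free, and since Chern characters commute with pullback we get $\ch(F|_S)=\bigl(3,-H_S,\ch_2(F)\cdot H^2\bigr)=(3,-H_S,e)$, using $\ch_2(F)\cdot H^2=(\ch_2(F)\cdot H)\cdot H=eL\cdot H=e$. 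Thus the Mukai vector of $F|_S$ is $v:=v(F|_S)=(3,-H_S,3+e)$.

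The first step is to observe that, once $F|_S$ is known to be $\mu_{H_S}$-semistable, it is automatically $\mu_{H_S}$-stable. Indeed, a proper Jordan--Hölder factor $G$ of $F|_S$ (for $\mu_{H_S}$-stability) would satisfy $\rk(G)\in\{1,2\}$ and $\mu_{H_S}(G)=\mu_{H_S}(F|_S)=-\tfrac{10}{3}$, whence $c_1(G)\cdot H_S=-\tfrac{10}{3}\rk(G)\notin\mathbb{Z}$; this is impossible since $c_1(G),H_S\in H^2(S,\mathbb{Z})$. Hence $F|_S$ has no proper Jordan--Hölder factor and is $\mu_{H_S}$-stable, in particular simple.

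The second step is the numerical contradiction. Since $F|_S$ is simple and $\omega_S\cong\oh_S$, Serre duality gives $\ext^2_S(F|_S,F|_S)=\hom_S(F|_S,F|_S)=1$, so $\chi_S(F|_S,F|_S)=2-\ext^1_S(F|_S,F|_S)\le 2$. On the other hand, by the Mukai pairing formula $\chi_S(F|_S,F|_S)=-(v,v)$ and $(v,v)=H_S^2-2\cdot 3\cdot(3+e)=-8-6e$. Therefore $-8-6e=(v,v)\ge-2$, which forces $e\le -1$. (On $X$ itself, Bogomolov's inequality $H^2\cdot\bigl(\ch_1(F)^2-6\,\ch_2(F)\bigr)\ge 0$ only gives $e\le 1$, so passing to the K3 section is essential.)

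The delicate point, which I expect to be the main obstacle, is that $F|_S$ must be $\mu_{H_S}$-semistable for general $S$: because we are forced to cut by members of the non-multiple system $|H|$ in order to keep $S$ a K3 surface, the Mehta--Ramanathan and Flenner restriction theorems do not apply off the shelf. I would handle this either by invoking a sharper restriction statement for K3 linear sections of $X$, or by exploiting the concrete description of the sheaves $F$ to which the lemma is applied: in the situations it is used, $F$ is produced from $\oh_X$, $\cU$ and $\cU^\vee$ by mutation, hence admits a short resolution by direct sums of these bundles, and the $\mu_{H_S}$-semistability of $F|_S$ can be traced back to the (well understood) restrictions of $\oh_X$, $\cU$ and $\cU^\vee$ to $S$.
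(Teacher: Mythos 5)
Your reduction to a degree--$10$ K3 linear section is a genuinely different route from the paper's, and the endgame is correct: once $F|_S$ is $\mu_{H_S}$-semistable, your observation that the slope $-10/3$ forbids proper Jordan--H\"older factors of rank $1$ or $2$ does force stability, hence simplicity, and the Mukai-pairing computation $(v,v)=-8-6e\geq -2$ correctly yields $e\leq -1$. The problem is the step you yourself flag: the $\mu_{H_S}$-semistability of $F|_S$ for a general K3 section $S=X\cap H_1\cap H_2$ is a genuine gap, not a technicality. To reach $S$ you must restrict twice through the non-multiple system $|H|$. The first restriction, to a general hyperplane section $Y$, is fine by Maruyama's restriction theorem precisely because $\rk(F)=3<4=\dim X$; but the second restriction, from the threefold $Y$ to $S$, has $\rk(F|_Y)=3=\dim Y$, so Maruyama no longer applies, and Mehta--Ramanathan, Flenner and Langer all require divisors of large degree. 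Neither of your proposed repairs closes this: no ``sharper restriction statement for K3 linear sections'' is available or cited, and the fallback via explicit resolutions does not match the lemma's actual use --- in the proof of Proposition~\ref{pr2-rho} the lemma is applied to the reflexive hull $K_3^{\vee\vee}$, for which no resolution by $\oh_X$, $\cU$, $\cU^{\vee}$ is at hand (only $K_3$ itself has one), and in any case the lemma is stated and needed for an arbitrary $\mu$-semistable $F$ with these invariants.

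The paper's proof is designed exactly to avoid this obstruction: it restricts only \emph{once}, to a general hyperplane section $Y$ (a GM threefold), where Maruyama's theorem applies, and then, instead of descending further to a K3 and using $(v,v)\geq -2$, it invokes the stronger Bogomolov--Gieseker-type inequality of \cite[Proposition 3.2]{Li15} for $\mu$-semistable sheaves on Fano threefolds of Picard number one, which already gives $e\leq -1$ from $\ch_{\leq 2}(F|_Y)=(3,-H,eL)$. (Your side remark that the classical Bogomolov inequality on $X$ only gives $e\leq 1$ is correct, which is why some strengthening is needed; the paper gets it from Li's threefold inequality rather than from K3 numerics.) As written, your argument does not establish the lemma; it would become a valid alternative proof only if you supplied a restriction theorem reaching the surface $S$, which I do not see how to do.
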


\begin{proof}
By Mayamura's restriction theorem, we can take a general smooth hyperplane section $Y$ such that $F|_Y$ remains $\mu$-semistable. Then $\ch_{\leq 2}(F|_Y)=(3,-H, eL)$. The result follows from  \cite[Proposition 3.2]{Li15}.
\end{proof}

\begin{lemma} \label{coho_rho_prop}
Use the notations as in Proposition~\ref{projection_tauto}, we have

\begin{enumerate}
    \item $\RHom(I_q(H), \cU)=k[-3]$.
     
    \item $\RHom(K_2,\cU)=k[-2]$.
    
    \item $\Ext^1(I_q(H), I_C)=k$.
    
    \item $\Hom(K_2, I_C)=k$.
\end{enumerate}

\end{lemma}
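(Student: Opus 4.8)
The plan is to derive all four statements from Proposition~\ref{projection_tauto}, using the triangle $K_2[-2]\to\cU\to\pr_1(\cU)$ it produces (recall $K_2=\bL_{\oh_X}(I_q(H))[-1]$, so $\bL_{\oh_X}(I_q(H))[-3]=K_2[-2]$), the two short exact sequences $0\to K_2\to\oh_X^{\oplus 5}\to I_q(H)\to 0$ and $0\to\cU\to\cQ^\vee\to I_q\to 0$ appearing in its proof, and the semiorthogonal decomposition $\D^b(X)=\langle\oh_X(-H),\cU,\Ku(X),\oh_X,\cU^\vee\rangle$.

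For $(2)$ I would apply $\RHom(-,\cU)$ to the triangle $K_2[-2]\to\cU\to\pr_1(\cU)$: since $\pr_1(\cU)\in\Ku(X)$ we have $\RHom(\pr_1(\cU),\cU)=0$ from the decomposition, so $\RHom(K_2,\cU)\cong\RHom(\cU,\cU)[2]=k[-2]$ by exceptionality of $\cU$. For $(1)$, apply $\RHom(-,\cU)$ to $0\to K_2\to\oh_X^{\oplus 5}\to I_q(H)\to 0$; since $\RHom(\oh_X,\cU)=H^\bullet(X,\cU)=0$ (again by the decomposition), this gives $\RHom(I_q(H),\cU)\cong\RHom(K_2,\cU)[-1]=k[-3]$.

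Parts $(3)$ and $(4)$ are equivalent to each other: $\RHom(\oh_X,I_C)=H^\bullet(X,I_C)=0$ (immediate from the structure sequence of $C$, as $H^\bullet(\oh_X)\xrightarrow{\sim}H^\bullet(\oh_C)$), so applying $\RHom(-,I_C)$ to $0\to K_2\to\oh_X^{\oplus 5}\to I_q(H)\to 0$ gives $\RHom(I_q(H),I_C)\cong\RHom(K_2,I_C)[-1]$, hence $\Ext^1(I_q(H),I_C)\cong\Hom(K_2,I_C)$. To compute $\RHom(I_q(H),I_C)$ I would twist the second resolution to $0\to\cU(H)\to\cQ^\vee(H)\to I_q(H)\to 0$ and apply $\RHom(-,I_C)$. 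Using $\cU^\vee\cong\cU(H)$ and $H^\bullet(X,\cU)=0$, the structure sequence of $C$ gives $\RHom(\cU(H),I_C)\cong\RHom(\cU^\vee,\oh_C)[-1]=0$ for a $\rho$-conic by Lemma~\ref{coho-2}(2). Using $H^\bullet(X,\cQ(-H))=0$ — which follows from the Euler sequence $0\to\cQ^\vee\to V_5^\vee\otimes\oh_X\to\cU^\vee\to 0$ twisted by $\oh_X(-H)$, the identity $\cU^\vee(-H)\cong\cU$, Kodaira vanishing and Serre duality — the structure sequence of $C$ gives $\RHom(\cQ^\vee(H),I_C)\cong\RHom(\cQ^\vee(H),\oh_C)[-1]$, which by Lemma~\ref{coho-2}(6) is $k[-1]\oplus k^{2}[-2]$ for a $\rho$-conic. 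Plugging these into the triangle $\RHom(I_q(H),I_C)\to\RHom(\cQ^\vee(H),I_C)\to\RHom(\cU(H),I_C)$ yields $\RHom(I_q(H),I_C)\cong k[-1]\oplus k^{2}[-2]$, so $\Ext^1(I_q(H),I_C)=k$ and therefore $\Hom(K_2,I_C)=k$.

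The genuinely routine steps are the long exact sequence bookkeeping. The point that requires care is $(3)$/$(4)$: one must use precisely the type-$\rho$ values of Lemma~\ref{coho-2} (which already cover the smooth, reducible and non-reduced $\rho$-conics via the sequences in its proof), and check that the outcome is exactly a one-dimensional $\Ext^1$, not more. This one-dimensionality reflects the incidence of a $\rho$-conic with the unique $\sigma$-quadric $q$, and that incidence is exactly what the resolution $0\to\cU\to\cQ^\vee\to I_q\to 0$ encodes — which is why I would route the computation through it rather than through the structure sequence $0\to I_q\to\oh_X\to\oh_q\to 0$.
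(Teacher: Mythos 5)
Your argument is correct. For parts (3) and (4) it is essentially the paper's proof: the paper likewise applies $\Hom(-,I_C)$ to $0\to\cU(H)\to\cQ^\vee(H)\to I_q(H)\to 0$ using $\RHom(\cU^\vee,I_C)=0$ for a $\rho$-conic, and then transfers $\Ext^1(I_q(H),I_C)$ to $\Hom(K_2,I_C)$ via $0\to K_2\to\oh_X^{\oplus 5}\to I_q(H)\to 0$; you merely make explicit the vanishing $H^\bullet(X,\cQ(-H))=0$ that the paper leaves implicit when it invokes Lemma~\ref{coho-2}(6). Where you genuinely diverge is in the order and mechanism for (1) and (2). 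The paper proves (1) first, by Serre duality reducing to $\RHom(\cU^\vee,I_q)$ and computing it from $0\to\cU\to\cQ^\vee\to I_q\to 0$ together with Lemma~\ref{coho-1} (i.e.\ Borel--Weil--Bott input), and then deduces (2) from (1) via the Koszul presentation of $K_2$. You instead prove (2) first, exploiting that $\pr_1(\cU)\in\Ku(X)$ is right-orthogonal to $\cU$ in the decomposition $\langle\oh_X(-H),\cU,\Ku(X),\oh_X,\cU^\vee\rangle$, so the triangle $K_2[-2]\to\cU\to\pr_1(\cU)$ immediately gives $\RHom(K_2,\cU)\cong\RHom(\cU,\cU)[2]$, and then you run the presentation of $K_2$ backwards to get (1). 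Your route is slicker and avoids any cohomology computation on the Grassmannian, at the cost of leaning on Proposition~\ref{projection_tauto} (in particular on the identification of the third term of the mutation triangle as $\bL_{\oh_X}(I_q(H))[-3]$), whereas the paper's route for (1)--(2) is self-contained given Lemma~\ref{coho-1}. Both are sound.
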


\begin{proof}
For $(1)$, by Serre duality we only need to compute $\RHom(\cU^{\vee}, I_q)$. Then we apply $\Hom(\cU^{\vee}, -)$ to the exact sequence
\[0\to \cU\to \cQ^{\vee} \to I_q\to 0,\]
the result follows from Lemma \ref{coho-1}.

For $(3)$, we apply $\Hom(-, I_C)$ to the exact sequence
\[0\to \cU(H)\to \cQ^{\vee}(H) \to I_q(H)\to 0,\]
since $C$ is a $\rho$-conic, the result follows from $\RHom(\cU^{\vee}, I_C)=0$ in Lemma~\ref{classify-conic} and Lemma~\ref{coho-2}.

Now if we apply $\Hom(-,\cU)$ to the exact sequence
\[0\to K_2\to \oh_X^{\oplus 5}\to I_q(H)\to 0,\]
then $(2)$ follows from $(1)$. If we apply $\Hom(-, I_C)$ to the exact sequence above, we have $\Hom(K_2, I_C)\cong\Ext^1(I_q(H), I_C)$, then $(4)$ follows from $(3)$.
\end{proof}

\begin{proposition} \label{pr2-rho}
Let $X$ be a general ordinary GM fourfold and $C$ be a $\rho$-conic on $X$. Then we have
\[\pr_2(I_C)\cong\pr_1(\cU)[1].\]
\end{proposition}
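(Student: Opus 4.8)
The plan is to compute $\pr_2(I_C)$ directly for a $\rho$-conic $C$ and recognize the answer as a shift of $\pr_1(\cU)$, using the explicit triangle for $\pr_1(\cU)$ from Proposition \ref{projection_tauto} and the cohomology computations in Lemma \ref{coho_rho_prop}. First I would observe, as in the proof of Proposition \ref{proj-tau}, that since $C$ is a $\rho$-conic we have $\RHom(\oh_X, I_C)=0$ and, by Lemma \ref{coho-2}(2), $\RHom(\cU^{\vee}, I_C)=0$; hence $\bL_{\oh_X}\bL_{\cU^{\vee}}(I_C)\cong I_C$ and therefore $\pr_2(I_C)=\bR_{\cU}\bR_{\oh_X(-H)}(I_C)$. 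Using Serre duality, $\RHom(I_C,\oh_X(-H))\cong\RHom(\oh_X(H),I_C[4])^{\vee}$ and $\RHom(I_C,\cU)\cong\RHom(\cU^{\vee}(H),I_C[4])^{\vee}$, so I would compute these via the standard sequence $0\to I_C\to\oh_X\to\oh_C\to 0$ together with Lemma \ref{coho-2}(1),(4) to get $\RHom(I_C,\oh_X(-H))=k[-2]$ and $\RHom(I_C,\cU)=k^4[-2]$, exactly as for $\tau$-conics. This produces the triangles $\bR_{\oh_X(-H)}(I_C)\to I_C\to\oh_X(-H)[2]$ and then $\bR_{\cU}\bR_{\oh_X(-H)}(I_C)\to\bR_{\cU}(I_C)\to K_1[1]$ with $K_1=\bR_{\cU}\oh_X(-H)[1]$.

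The key new input, not present in the $\tau$-case, is that for a $\rho$-conic the map governing $\cH^{-1}$ and $\cH^0$ of $\pr_2(I_C)$ behaves differently, and the cleanest route is to compare $\pr_2(I_C)$ with $\pr_1(\cU)[1]$ term by term. From Proposition \ref{projection_tauto} we have the triangle $K_2[-2]\to\cU\to\pr_1(\cU)$, i.e. $\pr_1(\cU)[1]$ fits into $\cU[1]\to\pr_1(\cU)[1]\to K_2[-1][1]=K_2$. So I would aim to exhibit a triangle of the same shape $\cU[1]\to\pr_2(I_C)\to K_2$ (or dually, one reconstructing $\pr_2(I_C)$ from $\cU$ and $K_2$) and then invoke uniqueness/functoriality to identify the two objects. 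Concretely, Lemma \ref{coho_rho_prop}(3),(4) give $\Ext^1(I_q(H),I_C)=k$ and $\Hom(K_2,I_C)=k$; I would use the defining sequence $0\to K_2\to\oh_X^{\oplus 5}\to I_q(H)\to 0$ and the relation $\pr_1(\cU)\cong\bL_{\oh_X}(I_q(H))[-2]\cdot(\text{shift})$ (more precisely the triangle $I_q(H)[-3]\to\cU\to\pr_1(\cU)$ established inside the proof of Proposition \ref{projection_tauto}) to relate $\pr_1(\cU)$ to $I_q(H)$, and then relate $\pr_2(I_C)$ to $I_q(H)$ via the conic $C$ living on the quadric $q$ (a $\rho$-conic spans a $\rho$-plane meeting the geometry of $q$). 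The unique nonzero class in $\Ext^1(I_q(H),I_C)$ should produce the comparison morphism.

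The main obstacle I anticipate is the bookkeeping in matching the two mutation towers: showing that the composite of right mutations $\bR_{\cU}\bR_{\oh_X(-H)}$ applied to $I_C$ produces precisely the same extension data as the left-mutation description of $\pr_1(\cU)$, rather than merely an object with the same class $\Lambda_1$ and the same cohomology sheaves. I would handle this by a diagram chase analogous to the commutative square in the proof of Proposition \ref{proj-tau}: set up the square whose rows are the $\bR_{\cU}$-triangles for $I_C$ and for $\oh_X(-H)$, identify the connecting map with the map $f_3'$ built from $\bL_{\oh_X(H)}\cU^{\vee}(H)\cong\cQ^{\vee}(H)[1]$, and use $\Hom(\cQ^{\vee}(H),\oh_C)$ for a $\rho$-conic (Lemma \ref{coho-2}(6), which gives $k\oplus k^2[-1]$, in contrast to the $\tau$-case) to pin down the cohomology. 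Once $\cH^{\bullet}(\pr_2(I_C))$ is determined and seen to coincide with $\cH^{\bullet}(\pr_1(\cU)[1])$, together with the one-dimensionality of the relevant $\Ext$ and $\Hom$ spaces from Lemma \ref{coho_rho_prop}, the isomorphism $\pr_2(I_C)\cong\pr_1(\cU)[1]$ follows, since the extension realizing either object is unique up to scalar. Finally I would note the consistency check $T(\pr_2(I_C))\cong\pr_1(\oh_C(H))[-2]$ from Lemma \ref{T-action}, which must be compatible with this identification.
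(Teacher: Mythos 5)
Your setup is correct and matches the paper's: you reduce to $\pr_2(I_C)=\bR_{\cU}\bR_{\oh_X(-H)}(I_C)$, obtain the same four-term sequence $0\to\cH^{-1}(\pr_2(I_C))\to\cU^{\oplus 4}\xra{\alpha}K_1\to\cH^{0}(\pr_2(I_C))\to I_C\to 0$ as in the $\tau$-case, and correctly identify that $\Hom(\cQ^{\vee}(H),\oh_C)=k$ for a $\rho$-conic forces $\ker(\alpha)\cong\cU$, so $\cH^{-1}(\pr_2(I_C))\cong\cU$, matching $\cH^{-1}(\pr_1(\cU)[1])$. The final step (both objects are extensions of $K_2$ by $\cU[1]$ and $\Ext^{1}(K_2,\cU[1])=k$, so they agree) is also the paper's.

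The genuine gap is in the middle: you never actually prove that $\cH^{0}(\pr_2(I_C))\cong K_2$, and this is where essentially all the work in the paper's proof lies. What the mutation computation hands you is an extension $0\to K_3\to\cH^{0}(\pr_2(I_C))\to I_C\to 0$, where $K_3=\mathrm{cok}(\cU^{\oplus 3}\hookrightarrow K_1)$, whereas $K_2$ is defined as $\ker(\oh_X^{\oplus 5}\twoheadrightarrow I_q(H))$; these are a priori unrelated sheaves that merely share a Chern character. The paper bridges them by a chain of stability arguments: it shows $K_3$ is $\mu$-semistable and reflexive (the reflexivity step needs Lemma~\ref{no_sheaf_T} to rule out a larger double dual), deduces that $\cH^{0}(\pr_2(I_C))$ is $\mu$-semistable, computes $\Hom(K_2,\cH^{0}(\pr_2(I_C)))=k$ via Lemma~\ref{coho_rho_prop}, and only then concludes $K_2\cong\cH^{0}(\pr_2(I_C))$ because a nonzero map between $\mu$-stable reflexive sheaves with equal rank and $\ch_1$ must be an isomorphism. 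Your proposed substitute --- relating $\pr_2(I_C)$ to $I_q(H)$ ``via the conic $C$ living on the quadric $q$'' --- does not work as stated: a $\rho$-conic spans a $\rho$-plane $\mathbb{P}(\wedge^2V_3)$ and has no particular incidence with the unique $\sigma$-quadric $q$ (which is cut out by a $\sigma$-3-plane $\mathbb{P}(V_1\wedge V_5)$), so the class in $\Ext^1(I_q(H),I_C)$ does not by itself produce the needed identification of cohomology sheaves. Saying the cohomology is ``seen to coincide'' assumes the conclusion; without the stability/reflexivity argument (or an equivalent mechanism) the proof is incomplete.
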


\begin{proof}
As in the proof of Proposition \ref{proj-tau}, we have a long exact sequence
\begin{equation}
    0\to \cH^{-1}(\pr_2(I_C))\to \cU^{\oplus 4}\xra{\alpha} K_1 \to \cH^0(\pr_2(I_C))\to I_C \to 0.
\end{equation}
Note that in this case $\Hom(\cQ^{\vee}(H), \oh_C)=k$ by Lemma \ref{coho-2}. Thus $$\cH^{-1}(\pr_2(I_C))\cong\ker(\alpha)\cong\cU.$$
And we have an exact sequence
\begin{equation}\label{H0_seq}
    0\to K_3\to \cH^0(\pr_2(I_C))\to I_C\to 0,
\end{equation}
where $0\to \cU^{\oplus 3}\to K_1\to K_3\to 0.$ Note that we have a commutative diagram

\[\begin{tikzcd}
	&& 0 \\
	&& {\oh_X(-H)} \\
	0 & {\cU^{\oplus 3}} & {\cU^{\oplus 5}} & {\cU^{\oplus 2}} & 0 \\
	0 & {\cU^{\oplus 3}} & {K_1} & K_3 & 0 \\
	&& 0
	\arrow[from=4-1, to=4-2]
	\arrow[from=4-2, to=4-3]
	\arrow[from=4-3, to=4-4]
	\arrow[from=4-4, to=4-5]
	\arrow[from=4-3, to=5-3]
	\arrow[from=1-3, to=2-3]
	\arrow[from=2-3, to=3-3]
	\arrow[from=3-1, to=3-2]
	\arrow[from=3-2, to=3-3]
	\arrow[from=3-3, to=3-4]
	\arrow[from=3-4, to=3-5]
	\arrow[shift right=1, no head, from=3-2, to=4-2]
	\arrow[from=3-4, to=4-4]
	\arrow[from=3-3, to=4-3]
	\arrow[no head, from=3-2, to=4-2]
\end{tikzcd}\]

Hence $K_3$ also fits into an  exact sequence
\begin{equation}\label{T_seq_1}
    0\to \oh_X(-H)\to \cU^{\oplus 2}\to K_3\to 0.
\end{equation}
By the slope stability of $\cU$, we see that the torsion part of $K_3$ is supported in codimension $\geq 2$.  Since $K_3$ is a quotient of two bundles, we know that the torsion part of $K_3$ is zero or has pure codimension one, which implies the torsion-freeness of $K_3$, hence $\cH^0(\pr_2(I_C))$ is also torsion-free. 
From $\RHom(\cU, \oh_X(-H))=0$, we have the following commutative diagram
\[\begin{tikzcd}
	& 0 & \cU & \cU \\
	0 & {\oh_X(-H)} & {\cU^{\oplus 2}} & {K_3} & 0
	\arrow[from=2-1, to=2-2]
	\arrow[from=2-2, to=2-3]
	\arrow[from=2-3, to=2-4]
	\arrow[from=2-4, to=2-5]
	\arrow[hook, from=1-3, to=2-3]
	\arrow[from=1-2, to=2-2]
	\arrow[from=1-2, to=1-3]
	\arrow[shift left=1, no head, from=1-3, to=1-4]
	\arrow["\kappa"', from=1-4, to=2-4]
	\arrow[no head, from=1-3, to=1-4]
\end{tikzcd}\]
with rows exact. Note that $\kappa$ is injective. Indeed, if $\kappa$ is not injective, then $\rk(\im(\kappa))=1$. By the stability of $\cU$, we see $\ch_1(\im(\kappa))=xH$, where $x\geq 0$. Hence, we see $\rk(\mathrm{cok}(\kappa))=2$ and  $\ch_{1}(\mathrm{cok}(\kappa))=(-1-x)H$. Since $\mathrm{cok}(\kappa)$ is a quotient of $\cU^{\oplus 2}$, by the stability of $\cU$ again, we have $x=0$. But by the uniqueness of Jordan--Holder factors of $\cU^{\oplus 2}$, we know that $\mathrm{cok}(\kappa)\cong \cU$, which is impossible since $K_3$ is a quotient of $K_1=\bR_{\cU}\oh_X(-H)[1]$ (cf.~Proposition \ref{proj-tau}).

Now by the injectivity of $\kappa$ and the snake lemma, we get an exact sequence $0\to \oh_X(-H)\to \cU\to \mathrm{cok}(\kappa)\to 0$, which implies $\mathrm{cok}(\kappa)=I_{\Sigma_1}$, where $\Sigma_1$ is the zero locus of a regular section of $\cU^\vee$. Hence we have an exact sequence
\begin{equation} \label{T_seq_2}
    0\to \cU \xra{\kappa} K_3\to I_{\Sigma_1}\to 0,
\end{equation}

First, we claim that $K_3$ is $\mu$-(semi)stable. Indeed, by the stability of $\cU$ and $I_{\Sigma_1}$, the only possible case is that the maximal destabilizing subsheaf of $K_3$ is in the form $I_W$, where~$W$ is a closed subscheme containing the surface  $\Sigma_1$. It is easy to see $\rk (K_3/I_W)=2$ and $\ch_1(K_3/I_W)=-H$. Thus $(K_3/I_W)^{\vee \vee}$ is also $\mu$-semistable with rank two and $\ch_1=-H$. If we apply $\Hom(-,(K_3/I_W)^{\vee \vee})$ to the sequence (\ref{T_seq_2}), we have $\Hom(\cU, (K_3/I_W)^{\vee \vee})\neq 0$. By the stability of $\cU$ and $(K_3/I_W)^{\vee \vee}$, we know that $\cU\subset (K_3/I_W)^{\vee \vee}$, but this is impossible. This is because $\cU$ is locally free and $(K_3/I_W)^{\vee \vee}$ is reflexive, the support of the quotient
is of codimension $\leq 1$, which contradicts with the fact $(K_3/I_W)^{\vee \vee}/\cU$ is supported in codimension $\geq 2$.
 
Next we claim that $K_3$ is reflexive. Indeed, we have a commutative diagram 
\[\begin{tikzcd}
	0 & \cU & K_3 & {I_{\Sigma_1}} & 0 \\
	0 & \cU & {K_3^{\vee \vee}} & {\oh_X} & {}
	\arrow[from=1-1, to=1-2]
	\arrow[from=1-2, to=1-3]
	\arrow[from=1-3, to=1-4]
	\arrow[from=1-4, to=1-5]
	\arrow[from=2-1, to=2-2]
	\arrow[from=2-2, to=2-3]
	\arrow["\theta", from=2-3, to=2-4]
	\arrow[shift right=1, no head, from=1-2, to=2-2]
	\arrow[shift right=1, hook, from=1-3, to=2-3]
	\arrow[hook, from=1-4, to=2-4]
	\arrow[no head, from=1-2, to=2-2]
\end{tikzcd}\]
and $\Im(\theta)=I_{Z_1}$, where $Z_1$ is a closed subscheme contained in $\Sigma_1$. If $Z_1\neq \Sigma_1$, we can assume that $\ch(I_{Z_1})=1-eL+fP$, where $e\geq 0$. In this case $K_3^{\vee \vee}$ is also $\mu$-semistable. But $H\cdot \ch_2(K_3^{\vee \vee})=L$ and this contradicts Lemma \ref{no_sheaf_T}. Thus $Z_1=\Sigma_1$ and we know that $K_3\cong K_3^{\vee \vee}$.

Then we claim that $\cH^0(\pr_2(I_C))$ is $\mu$-(semi)stable. Indeed, if $\cH^0(\pr_2(I_C))$ is not $\mu$-semistable, let $K_4$ be its minimal destabilizing quotient sheaf. Then by (\ref{H0_seq}) and the stability of $K_3$ and $I_C$, it is not hard to see that the only possible case is $\rk(K_4)=3$ and $\ch_1(K_4)=-H$. Then if we apply $\Hom(-, K_4^{\vee \vee})$ to the triangle (\ref{H0_seq}), we obtain $\Hom(K_3, K_4^{\vee \vee})\neq 0$. Since they have the same rank and $\ch_1$, by stability we have $K_3\subset K_4^{\vee \vee}$, which is impossible since they are both reflexive but $K_4^{\vee \vee}/K_3$ is supported in codimension $\geq 2$.

Finally, we show that $\Hom(K_2, \cH^0(\pr_2(I_C)))=k$, then using the $\mu$-stability of $K_2$ and $\cH^0(\pr_2(I_C))$, we obtain $K_2\cong \cH^0(\pr_2(I_C))$. From the definition of $K_2$, it is not hard to see that $\RHom(K_2, \oh_X(-H))=0$. Now applying $\Hom(K_2, -)$ to the exact sequence (\ref{T_seq_1}), we obtain $\RHom(K_2, \cU^{\oplus 2})=\RHom(K_2, K_3)$. By Lemma~\ref{coho_rho_prop}, we know that $\RHom(K_2, K_3)=k^2[-2]$. Therefore, if we apply $\Hom(K_2, -)$ to the exact sequence (\ref{H0_seq}), we obtain 
$$\Hom(K_2, \cH^0(\pr_2(I_C)))=\Hom(K_2, I_C),$$ which equals to $k$ by Lemma \ref{coho_rho_prop}.

Recall that $\pr_1(\cU)[1]$ sits in the triangle 
\[\cU[1]\rightarrow\pr_1(\cU)[1]\rightarrow K_2.\]
Now we have established $\pr_2(I_C)$ and $\pr_1(\cU)[1]$ share the same cohomology objects. Then the result $\pr_2(I_C)\cong\pr_1(\cU)[1]$ follows from the fact  $\Ext^1(K_2,\cU[1])=k$.
\end{proof}



\subsection{$\sigma$-conic}

In this subsection, we compute the projection objects of $\sigma$-conics.

\begin{proposition} \label{pr2-sigma}
Let $X$ be a general ordinary GM fourfold and $C$ be a $\sigma$-conic on $X$. Then we have a triangle
\[\mathbb{D}(I_q(H))[1]\to \pr_2(I_C)\to \cQ^{\vee},\]
where $q$ is the unique $\sigma$-quadric on $X$.
\end{proposition}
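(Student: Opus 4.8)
The plan is to run the four mutations defining $\pr_2$ on $I_C$, exactly as in the proofs of Propositions~\ref{proj-tau} and~\ref{pr2-rho}, but keeping track of what is special about $\sigma$-conics. By Lemma~\ref{classify-conic} one has $\Hom(\cQ^\vee,I_C)=k$, and combining Lemma~\ref{coho-2}(3) with the vanishing $H^\bullet(X,\cU)=0$ (Borel--Weil--Bott applied to the Koszul resolution of $X$, as in the proof of Lemma~\ref{coho-1}) gives $\RHom(\cU^\vee,I_C)\cong k[-1]\oplus k[-2]$; in particular, unlike the $\tau$- and $\rho$-cases, the mutation $\bL_{\cU^\vee}$ already acts non-trivially on $I_C$, so the direct bookkeeping is heavier here.

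The cleanest route is to replace $I_C$ by a resolution. For a $\sigma$-conic $C$ not contained in the $\sigma$-quadric $q$, composing the generator $\cQ^\vee\to I_C$ of $\Hom(\cQ^\vee,I_C)=k$ with $I_C\hookrightarrow\oh_X$ yields a section of $\cQ$ whose zero locus is $C$ itself (it contains $C$ and has codimension $3=\rk\cQ$), so the Koszul complex gives
\[0\to\oh_X(-H)\to\cQ(-H)\to\cQ^\vee\to I_C\to 0.\]
Now $\pr_2(\oh_X(-H))=0$ as in the proof of Lemma~\ref{pr2-3fold}, and $\pr_2(\cQ^\vee)=0$ since $\cQ^\vee\in\langle\oh_X,\cU^\vee\rangle$ (from the tautological sequence $0\to\cQ^\vee\to\oh_X^{\oplus 5}\to\cU^\vee\to 0$, which also yields $\bL_{\cU^\vee}\cQ^\vee\cong\oh_X^{\oplus 5}$ and $\bL_{\oh_X}\cU^\vee\cong\cQ^\vee[1]$, as used in the proof of Proposition~\ref{projection_tauto}). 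Splitting the four-term sequence and applying $\pr_2$ therefore gives $\pr_2(I_C)\cong\pr_2(\cQ(-H))[1]$, which in particular shows $\pr_2(I_C)$ is independent of the generic $\sigma$-conic $C$. One then computes the fixed object $\pr_2(\cQ(-H))$ by the same mutation calculus, using Lemma~\ref{coho-1}, Grothendieck--Verdier duality, and the identities $\bL_{\cU}\cQ^\vee\cong I_q$ and $K_2=\bL_{\oh_X}(I_q(H))[-1]$ from the proof of Proposition~\ref{projection_tauto}; this produces the triangle $\mathbb{D}(I_q(H))[1]\to\pr_2(\cQ(-H))[1]\to\cQ^\vee$, which is the assertion. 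Here $\mathbb{D}(I_q(H))$ enters through $\bL_{\cU}\cQ^\vee\cong I_q$ and the reflexive sheaf $K_2$, while the quotient $\cQ^\vee$ is what the final right mutations $\bR_{\cU}\bR_{\oh_X(-H)}$ leave behind.

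The main obstacle is, as in the proof of Proposition~\ref{pr2-rho}, the sheaf-theoretic identification at the end: after the mutations one is left with a two-term complex whose cohomology sheaves must be pinned down, which uses the $\mu$-stability of $\cU$, $\cQ^\vee$ and $K_2$ together with restriction-to-a-hyperplane arguments of the type in Lemma~\ref{no_sheaf_T} to exclude unwanted sub- and quotient sheaves and to recognise the extensions that appear (and their uniqueness, via one-dimensional $\Ext^1$-spaces). A secondary point is that the Koszul shortcut fails for the $\sigma$-conics lying on $q$, since there the section of $\cQ$ vanishes along all of $q$, which has the wrong codimension; for those one either repeats the direct mutation computation of $\pr_2(I_C)$ verbatim — the cohomological input from Lemmas~\ref{classify-conic},~\ref{coho-1} and~\ref{coho-2} is unchanged — or, since $F^\sigma_g(X)$ is irreducible and $\pr_2$ is exact, deduces the isomorphism with $\pr_2(\cQ(-H))[1]$ by semicontinuity once the flat family of projection objects has been constructed.
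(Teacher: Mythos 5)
Your route is genuinely different from the paper's and its skeleton is sound. The paper passes to a hyperplane section $Y\supset C$, uses Lemma~\ref{pr2-3fold} and the resolution $0\to \cU_Y\to \cQ^{\vee}_Y\to I_{C/Y}\to 0$ (valid for \emph{every} $\sigma$-conic), and then mutates $\cU_Y$ and $\cQ^\vee_Y$; you instead observe that a generic $\sigma$-conic is the zero locus of a regular section of $\cQ$ on $X$ itself and feed the full Koszul complex $0\to\oh_X(-H)\to\cQ(-H)\to\cQ^{\vee}\to I_C\to 0$ into $\pr_2$, killing $\oh_X(-H)$ and $\cQ^{\vee}$ to get $\pr_2(I_C)\cong\pr_2(\cQ(-H))[1]$. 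That reduction is correct (your cohomology claims $\RHom(\cU^\vee,I_C)=k[-1]\oplus k[-2]$, $\pr_2(\cQ^\vee)=0$, $\bL_{\oh_X}\cU^\vee\cong\cQ^\vee[1]$ all check out), and the two routes in fact converge: since $\pr_2(\cU)=0$ and $\pr_2(\oh_X(-H))=0$, the sequences $0\to\cU(-H)\to\cU\to\cU_Y\to 0$ and $0\to\cU(-H)\to\oh_X(-H)^{\oplus 5}\to\cQ(-H)\to 0$ give $\pr_2(\cU_Y)\cong\pr_2(\cU(-H))[1]\cong\pr_2(\cQ(-H))$, which is exactly the object the paper ends up projecting. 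What your approach buys is that it never leaves $X$ and makes manifest that $\pr_2(I_C)$ is independent of the generic $\sigma$-conic; what it costs is uniformity, see below.

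Two points fall short of a complete argument. First, the decisive computation — that $\pr_2(\cQ(-H))[1]$ sits in the triangle $\mathbb{D}(I_q(H))[1]\to\pr_2(\cQ(-H))[1]\to\cQ^{\vee}$ — is asserted rather than carried out; this is where the actual content of the proposition lives, and it is not a formality: one needs $\RHom(\cU^{\vee},\cQ(-H))=k[-3]$ (a Borel--Weil--Bott input not contained in Lemma~\ref{coho-1}), the subsequent mutations $\bL_{\oh_X}$, $\bR_{\oh_X(-H)}$, $\bR_{\cU}$, and finally the identification $\bR_{\cU}\cQ(-H)\cong\mathbb{D}(I_q(H))$ via $\bL_{\cU}\cQ^{\vee}\cong I_q$ (this last step the paper does spell out, and it is the only place $q$ enters). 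Second, your Koszul shortcut genuinely fails on the exceptional $\mathbb{P}^3\subset F^{\sigma}_g(X)$ of $\sigma$-conics lying on $q$ (those with $V_1=V_1^{\omega}$), and the two fixes you offer are not equally available: "repeating the direct mutation computation verbatim" is not verbatim, since for those conics there is no regular section of $\cQ$ cutting out $C$ and the resolution of $I_C$ is different; the semicontinuity/specialization argument does work but requires knowing the family $\{\pr_2(I_C)\}$ is a flat family of objects with no jumping, which at this stage of the paper has not yet been established. The paper's detour through $Y$ avoids both issues at once, which is presumably why it is taken.
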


\begin{proof}
Let $Y$ be a hyperplane section of $X$ containing $C$, it is easy to see $Y$ is integral because $X$ is of Picard number one.
Since $X$ is general, it does not contain any plane, then $\langle C \rangle \cap Y=C$. At the same time, $C$ is a $\sigma$-conic in $Y$, which means that $C$ is the zero locus of a section of $\cQ_Y$. Thus we have an exact sequence on $Y$
\[0\to \cU_Y\to \cQ^{\vee}_Y\to I_{C/Y}\to 0.\]
Note that we have exact sequences on $X$
\begin{equation}\label{U}
 0\to \cU(-H)\to \cU\to \cU_Y\to 0  
\end{equation}
and
\begin{equation}\label{Q dual}
0\to \cQ^{\vee}(-H)\to \cQ^{\vee}\to \cQ^{\vee}_Y\to 0.   
\end{equation}

Hence $\RHom(\cU^{\vee}, \cU_Y)=k[-3]$ and $\RHom(\cU^{\vee}, \cQ^{\vee}_Y)=k[-1]$. Applying the mutation~$\bL_{\oh_X}$ to the defining complexes of $\bL_{\cU^{\vee}}\cU_Y$ and $\bL_{\cU^{\vee}}\cQ^{\vee}_Y$ respectively,
we get triangles
\[\cU_Y\to \bL_{\oh_X}\bL_{\cU^{\vee}}\cU_Y\to \cQ^{\vee}[-1]\]
and
\[\cQ^{\vee}\to \cQ^{\vee}_Y\to \bL_{\oh_X}\bL_{\cU^{\vee}}\cQ^{\vee}_Y.\]
Now applying the mutation $\bR_{\oh_X(-H)}$ on the sequences (\ref{U}) and (\ref{Q dual}), since $\Ext^1(\cU, \cQ^{\vee})=0$ by Lemma~ \ref{coho-1},
we have $\bR_{\oh_X(-H)}\cU_Y\cong \cU\oplus \cQ(-H)$ and $\bR_{\oh_X(-H)}\cQ_Y^{\vee}\cong \cU\oplus \cQ^{\vee}$. 

Applying $\bR_{\oh_X(-H)}$ to the above triangles respectively, we get triangles
\[\cU\oplus \cQ(-H)\to \bR_{\oh_X(-H)}\bL_{\oh_X}\bL_{\cU^{\vee}}\cU_Y\to \cQ^{\vee}[-1]\]
and
\[\cQ^{\vee}\to \cU\oplus \cQ^{\vee}\to \bR_{\oh_X(-H)}\bL_{\oh_X}\bL_{\cU^{\vee}}\cQ^{\vee}_Y.\]

After taking the mutation $\bR_{\cU}$, we have 
\[\bR_{\cU}\cQ(-H)\to \pr_2(\cU_Y)\to \cQ^{\vee}[-1]\]
and
\[\pr_2(\cQ^{\vee}_Y)\cong0.\]
Therefore, using Lemma~\ref{pr2-3fold}, combined with the sequence $0\to \cU_Y\to \cQ^{\vee}_Y\to I_{C/Y}\to 0$ and $\pr_2(\cQ^{\vee}_Y)\cong0$, we obtain that 
\[\pr_2(I_{C})\cong\pr_2(\cU_Y)[1].\]
Under this case, $\pr_2(I_{C})$ sits in the triangle 
\[\bR_{\cU}\cQ(-H)[1]\to\pr_2(I_C)\to \cQ^{\vee}.\]
Now the result follows from $\bR_{\cU}\cQ(-H)=\mathbb{D}(I_q(H))$. To this end, we only need to prove that $I_q(H)\cong\mathbb{D}(\bR_{\cU}\cQ(-H))\cong\bL_{\cU^{\vee}}\cQ^{\vee}(H)$, which is implied by the fact $\bL_{\cU}\cQ^\vee\cong I_q$ in Proposition~\ref{projection_tauto}.
\end{proof}

\section{Stability of projection objects of conics}\label{stability_projection_conics}
In this section, we apply Proposition~\ref{stability_object_ext1_small} to show $\mathrm{pr}_2(I_C)$ is stable with respect to every stability condition on $\Ku(X)$ for a \emph{very general} ordinary GM fourfold~$X$. 

\begin{theorem} \label{pr_stable_thm}
Let $X$ be a very general ordinary GM fourfold and $C$ be a conic on $X$. Then the objects $\mathrm{pr}_2(I_C)$ and $\pr_1(\oh_C(H))$ are stable with respect to every stability condition on $\Ku(X)$.
\end{theorem}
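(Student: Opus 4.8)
The plan is to reduce the stability of all projection objects to the criterion of Proposition~\ref{stability_object_ext1_small}, which says that an object $E \in \Ku(X)$ with $\chi(E,E) = -2$ and $\ext^1(E,E) \leq 9$ is automatically stable for every stability condition. Since $\pr_1(\oh_C(H))$ and $\pr_2(I_C)$ both have character $\Lambda_1$ with $\chi(\Lambda_1,\Lambda_1) = -2$, and by Lemma~\ref{T-action} they differ only by the involution $T$ (up to shift), it suffices to bound $\ext^1(\pr_2(I_C),\pr_2(I_C)) \leq 9$ — indeed $\leq 8$, since this number is even for objects in the heart. So the real content is a case-by-case cohomology computation for the three types of conics, using the explicit resolutions established in Section~\ref{projection_objects}.

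First I would treat the $\tau$-conic case. Here Proposition~\ref{proj-tau} gives the four-term exact sequence $0 \to \cU^{\oplus 4} \to K_1 \to \pr_2(I_C) \to I_C \to 0$ with $K_1 = \bR_{\cU}\oh_X(-H)[1]$. I would break this into two short exact sequences, introducing the auxiliary sheaf $G = \ker(\pr_2(I_C) \to I_C)$, and compute $\RHom(\pr_2(I_C), \pr_2(I_C))$ by applying $\Hom(-, \pr_2(I_C))$ and $\Hom(\pr_2(I_C), -)$ to these sequences. The needed inputs are $\RHom$ between $\cU$, $K_1$, $I_C$, and $\oh_X(-H)$, most of which follow from Lemma~\ref{coho-1}, Lemma~\ref{coho-2}, Lemma~\ref{classify-conic}, and Borel--Weil--Bott on $\Gr(2,V_5)$ via the Koszul resolution; the terms involving $I_C$ require $\ext^\bullet(I_C, I_C)$, which one computes from $0 \to I_C \to \oh_X \to \oh_C \to 0$ together with the splitting type in Lemma~\ref{splitting_type}. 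For the $\rho$-conic, Proposition~\ref{pr2-rho} identifies $\pr_2(I_C) \cong \pr_1(\cU)[1]$, so I would instead bound $\ext^1(\pr_1(\cU), \pr_1(\cU))$ using the triangle $\cU \to \pr_1(\cU) \to K_2[-1]$ from Proposition~\ref{projection_tauto}, where $K_2$ is the $\mu$-stable reflexive sheaf $\bL_{\oh_X}(I_q(H))[-1]$; here the relevant Ext-groups among $\cU$, $K_2$, and $\oh_X$ come from Lemma~\ref{coho_rho_prop} and the defining sequence $0 \to K_2 \to \oh_X^{\oplus 5} \to I_q(H) \to 0$. For the $\sigma$-conic, Proposition~\ref{pr2-sigma} gives the triangle $\mathbb{D}(I_q(H))[1] \to \pr_2(I_C) \to \cQ^{\vee}$, and I would run the same bookkeeping with $\cQ^{\vee}$ and $\mathbb{D}(I_q(H))$ in place of $I_C$ and $K_1$, using Lemma~\ref{coho-1} and duality to get the cross-terms.

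The point where the very general hypothesis enters, and where I expect the main obstacle, is ruling out the borderline case: a priori the Euler form computation only forces $\ext^1(\pr_2(I_C), \pr_2(I_C)) \geq 4$ (by Proposition~\ref{homological_dimensional_heart}(2)), and one must show the explicit resolution does not push it above $9$. Some of the intermediate $\Ext$-groups, particularly $\Ext^1$ and $\Ext^2$ terms involving the reflexive sheaves $K_1, K_2$ paired with $I_C$ or with themselves, are not pinned down purely by Borel--Weil--Bott and require genuine input about the geometry of the specific conic and of the $\sigma$-quadric $q$; controlling these is where Hodge-theoretic genericity of $X$ is used (for instance, to ensure $K_3$ and $\cH^0(\pr_2(I_C))$ are reflexive and $\mu$-stable as in the proof of Proposition~\ref{pr2-rho}, and to exclude extra sections that would inflate $\ext^1$). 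Once all three cases yield $\ext^1 \leq 9$, Proposition~\ref{stability_object_ext1_small} immediately gives stability of $\pr_2(I_C)$, and then Lemma~\ref{T-action} together with the fact that $T$ is an auto-equivalence (Proposition~\ref{involution_functor_original}) transports stability to $\pr_1(\oh_C(H))$, completing the proof.
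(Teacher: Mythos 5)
Your reduction is the right one --- Lemma~\ref{T-action} lets you treat only one of the two objects, the case division by conic type matches the paper, and Proposition~\ref{stability_object_ext1_small} is indeed the criterion everything feeds into. But there is a genuine gap at the step you yourself flag as ``the main obstacle,'' and the mechanism you propose for closing it is not the one that works. Direct bookkeeping with the long exact sequences coming from the resolutions does \emph{not} pin down $\ext^1$ well enough: the paper's own intermediate computations only yield $\RHom(I_{C/\Sigma},I_{C/\Sigma})=k\oplus k^n[-1]\oplus k^{n+1}[-2]$ with $n\geq 3$ undetermined (Lemma~\ref{ICSexts1}), and in the $\rho$-case only $\RHom(\cU,\bL_{\oh_X}(I_q(H)))$ with a rank $m$ constrained merely to $3\leq m\leq 25$ (Lemma~\ref{ext_group_pr_U}). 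A priori this allows $\ext^1$ far above $9$, and no amount of Borel--Weil--Bott or genericity of $X$ resolves the ambiguity: the unknown ranks come from cokernels of evaluation maps whose rank is not forced by the geometry of $C$ or $q$ alone.

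What actually closes the argument in the paper is the $2$-Calabi--Yau structure of $\Ku(X)$: one first shows the projection object $E$ is \emph{simple} and that $\Ext^i(E,E)$ vanishes for $i\notin\{0,1,2\}$ (this part the resolutions do give you), then Serre duality $S_{\Ku(X)}\cong[2]$ forces $\ext^2(E,E)=\hom(E,E)=1$, and $\chi(E,E)=-2$ then yields $\ext^1(E,E)=4$ exactly --- no upper bound on the intermediate groups is ever needed. In the $\rho$-case the same trick appears as the adjunction chain $\Hom(\cU,\pr_1(\cU))=\Hom(\pr_1(\cU),\pr_1(\cU))=\Hom(\pr_1(\cU),\pr_1(\cU)[2])=\Hom(\cU,\pr_1(\cU)[2])$, which pins $m=4$. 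Your attribution of the resolution to ``Hodge-theoretic genericity'' (via reflexivity and $\mu$-stability of $K_3$ and $\cH^0(\pr_2(I_C))$) is misplaced: those structural facts are proved for general $X$ and serve only to identify $\pr_2(I_C)$ with $\pr_1(\cU)[1]$; the very general hypothesis enters solely through the rank-two lattice constraint inside Proposition~\ref{stability_object_ext1_small}. One further practical point: for $\tau$-conics the paper computes with $\pr_1(\oh_C(H))\cong\bL_{\oh_X}(I_{C/\Sigma}(H))[1]$, a two-step mutation, rather than the four-term presentation of $\pr_2(I_C)$ you propose; the latter is workable in principle but compounds the undetermined-boundary-map problem.
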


\begin{proof}
By Proposition \ref{stability_object_ext1_small} and the identification in Lemma \ref{T-action}, for a conic $C$, we only need to show that one of the objects $\pr_1(\oh_C(H))$ and $\pr_2(I_C)$ is stable. In the followings, we prove the theorem in Proposition~ \ref{stability of tau_conic}, Proposition~\ref{stability_rho_conic} and Proposition~\ref{stability_projection_sigma_conic}.
\end{proof}

\subsection{Stability of projection objects of $\tau$-conics}
At first, we list two lemmas that are useful when we compute the spectral sequences in Proposition~\ref{stability of tau_conic}.
\begin{lemma}\label{ICIS}
Let $X$ be a general ordinary GM fourfold and $C$ be a $\tau$-conic on $X$. Let $\Sigma$ be the zero locus of a section of $\cU^{\vee}$ containing $C$. Then we have

\begin{enumerate}
    \item $\RHom(I_{C},I_{C})=k[0]\oplus k^{5}[-1]\oplus k^7[-2]$.
    
    \item $\RHom(I_{\Sigma}, I_{\Sigma})=k[0]\oplus k^4[-1]$
    
    \item $\RHom(I_{C}, I_{\Sigma})=k[-1]\oplus k^4[-2]$.
    
    \item $\RHom(I_{\Sigma}, I_{C})=k[0]\oplus k^6[-1]$.
\end{enumerate}
\end{lemma}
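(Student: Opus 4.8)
The computation of these four $\RHom$-groups is a matter of carefully combining known resolutions with the Borel–Weil–Bott theorem and the earlier cohomological lemmas. The key input is that for a $\tau$-conic $C$ we have $\hom(\cU,I_C)=1$ and $\hom(\cQ^\vee,I_C)=0$ (Lemma~\ref{classify-conic}), that $\cU|_C\cong\oh_C(1)^{\oplus 2}$ by Lemma~\ref{splitting_type}, and that $\Sigma=\Gr(2,V_4)\cap X$ is the zero locus of a section of $\cU^\vee$, so that $C\subset\Sigma$ with $\ker(\cU\twoheadrightarrow I_\Sigma)\cong\oh_X(-H)$ as in the proof of Proposition~\ref{proj-tau}.

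First I would treat $(2)$, the easiest case. Since $\Sigma$ is a $(2,2)$-complete-intersection-type surface cut out by a section $s\in H^0(X,\cU^\vee)$, the Koszul complex $0\to\oh_X\to\cU^\vee\to I_\Sigma(?)\to 0$ (up to the appropriate twist, $\oh_X\xrightarrow{s}\cU^\vee\to\wedge^2\cU^\vee=\oh_X(H)$ gives $0\to\oh_X\to\cU^\vee\to I_\Sigma(H)\to 0$) resolves $I_\Sigma$. Applying $\RHom(-,I_\Sigma)$ and using $\RHom(\oh_X,\oh_X)=k$, $\RHom(\oh_X,\cU^\vee)$, $\RHom(\cU^\vee,\cU^\vee)=k$, together with the self-Ext of the rank-one twist, reduces everything to Borel–Weil–Bott computations on $\Gr(2,V_5)$ via the Koszul resolution of $X$ (as in Lemma~\ref{coho-1}); one reads off $\RHom(I_\Sigma,I_\Sigma)=k\oplus k^4[-1]$. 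Note $\chi(I_\Sigma,I_\Sigma)=2-4=-2$, consistent with $\Sigma$ being (the support of) a class with square $-2$.

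Next I would do $(1)$, $(3)$, $(4)$ together, using the two short exact sequences
\[
0\to I_\Sigma\to I_C\to I_{C/\Sigma}\to 0,\qquad 0\to\oh_X(-H)\to\cU\to I_\Sigma\to 0.
\]
From the first sequence, $\RHom(I_C,I_\Sigma)$, $\RHom(I_\Sigma,I_C)$ and $\RHom(I_C,I_C)$ all follow by a short diagram chase once one knows $\RHom(I_\Sigma,I_\Sigma)$, $\RHom(I_{C/\Sigma},I_\Sigma)$, $\RHom(I_\Sigma,I_{C/\Sigma})$ and $\RHom(I_{C/\Sigma},I_{C/\Sigma})$. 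Here $I_{C/\Sigma}$ is the ideal of a conic inside the surface $\Sigma$; since a conic is a divisor of degree/class controlled by $H|_\Sigma$, one can resolve $I_{C/\Sigma}$ on $\Sigma$ (or compute directly on $C\cong\PP^1$ via $\oh_\Sigma\to\oh_C$ and adjunction/Grothendieck–Verdier on $\Sigma$, which is itself a nice surface — a degree-$6$ del Pezzo-type surface). Alternatively, and perhaps cleanly, one computes $\RHom(I_C,I_\Sigma)$ and $\RHom(I_\Sigma,I_C)$ directly from the second exact sequence: $\RHom(\cU,I_C)=k$, $\RHom(\oh_X(-H),I_C)=\RHom(\oh_X,I_C(H))$, and these are handled by the Koszul resolution of $X$ and Lemma~\ref{coho-2}; similarly $\RHom(I_C,\cU)$, $\RHom(I_C,\oh_X(-H))$ appeared already in the proof of Proposition~\ref{proj-tau} ($=k^4[-2]$ and $k[-2]$). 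Combining these with Euler-characteristic bookkeeping ($\chi(I_C,I_C)=1-5+7=3$? — one should double-check: actually $\chi(\oh_C,\oh_C)$ etc. must be recomputed, the stated answer has $\chi=1-5+7=3$, which reflects that $I_C$ is not an object of the K3 category) pins down all the dimensions.

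The main obstacle I anticipate is bookkeeping rather than conceptual: keeping track of the several spectral-sequence/long-exact-sequence contributions without sign or degree errors, and in particular verifying that the connecting maps have the expected rank (e.g. that certain $\Ext^1\to\Ext^1$ maps induced by $\oh_X(-H)\hookrightarrow\cU$ or $I_\Sigma\hookrightarrow I_C$ are injective/surjective, so that no unexpected cancellation or jump occurs). This is exactly the kind of place where one uses generality of $X$ and the explicit geometry of $\tau$-conics. I would organize the proof as: (i) compute all Hom-groups between the building blocks $\oh_X(-H)$, $\cU$, $\cU^\vee$, $I_C$ via the Koszul resolution of $X$ and Lemmas~\ref{coho-1}, \ref{coho-2}; (ii) deduce $(2)$; (iii) deduce $(3)$ and $(4)$ from the exact sequence $0\to\oh_X(-H)\to\cU\to I_\Sigma\to 0$ applied in both variables against $I_C$, plus $(2)$; (iv) deduce $(1)$ from $0\to I_\Sigma\to I_C\to I_{C/\Sigma}\to 0$ (or from a self-extension argument using $(2)$, $(3)$, $(4)$ and $\RHom(I_{C/\Sigma},I_{C/\Sigma})$); (v) cross-check every answer against its Euler characteristic.
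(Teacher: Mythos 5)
Your plan for parts (2), (3) and (4) matches the paper's proof: the paper computes (2) by applying a spectral-sequence lemma to the Koszul resolution $0\to\oh_X(-H)\to\cU\to I_\Sigma\to 0$ (your twisted version $0\to\oh_X\to\cU^\vee\to I_\Sigma(H)\to 0$ is the same sequence), and obtains (3) and (4) by applying $\Hom(I_C,-)$ and $\Hom(-,I_C)$ to that resolution, using exactly the groups $\RHom(I_C,\oh_X(-H))=k[-2]$, $\RHom(I_C,\cU)=k^4[-2]$, $\RHom(\cU,I_C)=k$ that you identify. Two small slips there: $\chi(I_\Sigma,I_\Sigma)=1-4=-3$, not $2-4=-2$ (your own answer $k\oplus k^4[-1]$ forces $-3$), and $\Sigma=\Gr(2,V_4)\cap X$ is a degree-four del Pezzo surface (a $(2,2)$ complete intersection in $\PP^4$), not degree six.

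The genuine gap is in part (1). Your plan is to deduce $\RHom(I_C,I_C)$ from the filtration $0\to I_\Sigma\to I_C\to I_{C/\Sigma}\to 0$ together with the groups $\RHom(I_{C/\Sigma},I_{C/\Sigma})$, $\RHom(I_{C/\Sigma},I_\Sigma)$, $\RHom(I_\Sigma,I_{C/\Sigma})$, which you propose to compute by resolving $I_{C/\Sigma}$ on the surface $\Sigma$. This is where the argument would stall: $I_{C/\Sigma}$ is a torsion sheaf on the fourfold $X$ supported on $\Sigma$, so its Ext groups on $X$ are not the ones you would compute on $\Sigma$ (the normal bundle $N_{\Sigma/X}$ enters through a local-to-global comparison), and the connecting maps in the resulting long exact sequences are not obviously of the expected rank. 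Tellingly, the paper runs this computation in the \emph{opposite} direction: Lemma~\ref{ICSexts1} derives $\RHom(I_{C/\Sigma},I_{C/\Sigma})=k\oplus k^n[-1]\oplus k^{n+1}[-2]$ \emph{from} Lemma~\ref{ICIS}, and even then only up to an undetermined $n\geq 3$ --- so the exact values your plan needs as input are never actually pinned down. The paper's proof of (1) instead imports the key dimension from geometry: $\ext^1(I_C,I_C)=5$ because the Hilbert scheme of conics $F_g(X)$ is smooth of dimension $5$ (\cite[Theorem 3.2]{iliev2011fano}). It then kills $\ext^4(I_C,I_C)=\hom(I_C,I_C(-2H))=0$ and $\ext^3(I_C,I_C)\cong\Ext^1(I_C,I_C(-2H))\cong\Hom(I_C,\oh_C(-2H))=0$ by Serre duality and the sequence $0\to I_C(-2H)\to\oh_X(-2H)\to\oh_C(-2H)\to 0$, and finally reads off $\ext^2=7$ from $\chi(I_C,I_C)=3$. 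Without the deformation-theoretic input $\ext^1(I_C,I_C)=5$, your sketch does not close.
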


\begin{proof}
(1): It is clear that $\hom(I_{C},I_{C})=1$, by Serre duality we have $$\ext^4(I_{C},I_{C})=\hom(I_{C},I_{C}(-2H))=0.$$ 
Using \cite[Theorem 3.2]{iliev2011fano}, we know $\ext^1(I_{C}, I_{C})=5$. Since $\chi(I_{C},I_{C})=3$, we only need to show that $\ext^3(I_{C},I_{C})=0$. To this end, we apply $\Hom(I_{C},-)$ to the exact sequence 
$$0\to I_{C}(-2H)\to \oh_X(-2H)\to \oh_C(-2H)\to 0.$$ 
Since $\Hom(I_{C}, \oh_X(-2H))\cong\Ext^1(I_{C}, \oh_X(-2H))=0,$ we have $$\Ext^1(I_{C}, I_{C}(-2H))\cong\Hom(I_{C}, \oh_C(-2H))=0.$$ 
By Serre duality, we obtain $\Ext^3(I_{C},I_{C})\cong\Ext^1(I_{C}, I_{C}(-2H))=0$.

(2): Note that $\chi(I_{\Sigma}, I_{\Sigma})=-3$. Recall that $\Sigma$ is the zero locus of a section of $\cU^{\vee}$, hence we have the Koszul resolution
\[0\to \oh_X(-H) \to  \cU \to I_{\Sigma}\to 0.\]
Then the result follows from applying \cite[Lemma 2.27]{pirozhkov2020admissible} to this exact sequence.

(3): It is clear that $\Hom(I_{C}, I_{\Sigma})=\Ext^4(I_C, I_{\Sigma})=0$. 
Now the result follows from applying $\Hom(I_{C},-)$ to the Koszul resolution of $I_{\Sigma}$.

(4): Applying $\Hom(-, I_{C})$ to the resolution of $I_{\Sigma}$, we obtain $\Ext^i(I_{\Sigma}, I_{C})=0$ for $i\neq 0,1$ and an exact sequence
\[0\to \Hom(I_{\Sigma}, I_{C})\to k\to k^6\to \Ext^1(I_{\Sigma}, I_{C})\to 0. \]
Since $\hom(I_{\Sigma}, I_{C})= 1$, we obtain $\ext^1(I_{\Sigma}, I_{C})=6$.
\end{proof}

\begin{lemma} \label{ICSexts1}
Let $X$ be a general ordinary GM fourfold and $C$ be a  $\tau$-conic on $X$. Let $\Sigma$ be the zero locus of a section of $\cU^{\vee}$ containing $C$. Then we have:

\begin{enumerate}
    \item $\RHom(I_{C/\Sigma}, I_{C/\Sigma})=k[0]\oplus k^n[-1]\oplus k^{n+1}[-2]$ for some $n\geq 3$.
    
    \item $\RHom(I_{C/\Sigma}(H), \oh_X)=k^2[-2]$.
    
    \item $\RHom(\oh_X, I_{C/\Sigma}(H))=k^2[0]$.
\end{enumerate}
\end{lemma}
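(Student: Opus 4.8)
The plan is to first pin down the geometry of $\Sigma$. By Proposition~\ref{proj-tau}, $\Sigma=\mathrm{Gr}(2,V_4)\cap X$ is the zero locus of a section of $\cU^\vee$, so $N_{\Sigma/X}\cong\cU^\vee|_\Sigma$, $\textstyle\bigwedge^2 N_{\Sigma/X}\cong\oh_\Sigma(H)$, and $\omega_\Sigma\cong\omega_X\otimes\det N_{\Sigma/X}|_\Sigma\cong\oh_\Sigma(-H)$. Moreover $\langle\mathrm{Gr}(2,V_4)\rangle=\PP(\textstyle\bigwedge^2 V_4)=\PP^5$ meets the ambient $\PP^8$ in a $\PP^4$, and $\Sigma$ is cut out there by two quadrics, hence is a del Pezzo surface of degree $4$, smooth for general $X$. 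From the complete-intersection resolution one reads $R\Gamma(\oh_\Sigma)=k$, $R\Gamma(\oh_\Sigma(H))=k^5$ and $R\Gamma(\oh_\Sigma(-H))=k[-2]$ (the last by Serre duality on $\Sigma$ from $h^{>0}(\oh_\Sigma)=0$). Finally, a $\tau$-conic spans a plane $\langle C\rangle=\PP^2$, necessarily inside $\langle\Sigma\rangle=\PP^4$, so $\oh_C(H)\cong\oh_{\PP^1}(2)$, $\oh_C(-H)\cong\oh_{\PP^1}(-2)$, and the restriction $H^0(\oh_\Sigma(H))\to H^0(\oh_C(H))$ is surjective, since it factors through the isomorphism $H^0(\oh_{\PP^2}(1))\xrightarrow{\ \sim\ }H^0(\oh_C(1))$.

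Parts (2) and (3) are then immediate from the sequence $0\to I_{C/\Sigma}(m)\to\oh_\Sigma(m)\to\oh_C(m)\to 0$. For $m=1$, the surjectivity above plus the cohomology vanishings give $R\Gamma(I_{C/\Sigma}(H))=k^{\,5-3}=k^2$ in degree $0$, which is (3). For $m=-1$, using $R\Gamma(\oh_\Sigma(-H))=k[-2]$ and $R\Gamma(\oh_C(-H))=k[-1]$ gives $R\Gamma(I_{C/\Sigma}(-H))=k^2[-2]$; then Serre duality on $X$ (with $\omega_X=\oh_X(-2H)$) yields $\RHom(I_{C/\Sigma}(H),\oh_X)\cong R\Gamma\big(I_{C/\Sigma}(H)\otimes\omega_X\big)^\vee[-4]=R\Gamma(I_{C/\Sigma}(-H))^\vee[-4]=k^2[-2]$, which is (2).

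For (1), I would first fix the shape of the complex without computing $n$. From $0\to I_\Sigma\to I_C\to I_{C/\Sigma}\to 0$ one has $[I_{C/\Sigma}]=[I_C]-[I_\Sigma]$ in $K_0(X)$, so Lemma~\ref{ICIS} gives $\chi(I_{C/\Sigma},I_{C/\Sigma})=\chi(I_C,I_C)-\chi(I_C,I_\Sigma)-\chi(I_\Sigma,I_C)+\chi(I_\Sigma,I_\Sigma)=3-3-(-5)+(-3)=2$. Next, $\Hom_X(I_{C/\Sigma},I_{C/\Sigma})=k$ since $I_{C/\Sigma}$ is a torsion-free rank-one sheaf on the integral surface $\Sigma$, and $\Ext^3_X=\Ext^4_X=0$ by Serre duality, because $\Ext^i_X(I_{C/\Sigma},I_{C/\Sigma}(-2H))=0$ for $i=0,1$: indeed $\sheafhom_\Sigma(I_{C/\Sigma},I_{C/\Sigma}(-mH))$ has reflexive hull $\oh_\Sigma(-mH)$ with no global sections for $m>0$, and the corresponding $\Ext^1$ vanishes by the twisted form of the normal-bundle computation below. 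Together with $\chi=2$ and $\ext^0=1$, this forces $\Ext^2_X(I_{C/\Sigma},I_{C/\Sigma})=\Ext^1_X(I_{C/\Sigma},I_{C/\Sigma})+1=:n+1$, which gives the claimed form. To see $n\ge 3$: for smooth $\Sigma$ (the generic case) $I_{C/\Sigma}$ is the line bundle $\oh_\Sigma(-C)$, the local-to-global spectral sequence degenerates, and $\Ext^i_X(\oh_\Sigma(-C),\oh_\Sigma(-C))=\bigoplus_{p+q=i}H^p(\Sigma,\textstyle\bigwedge^q N_{\Sigma/X})$; in particular $n=\Ext^1_X=H^0(\cU^\vee|_\Sigma)$. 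Since $H^2(\cU^\vee|_\Sigma)=H^0(\cU|_\Sigma(-H))^\vee=0$, and Riemann--Roch on $\Sigma$ with $c_1(\cU^\vee|_\Sigma)=-K_\Sigma$ and $c_2(\cU^\vee|_\Sigma)=\int_X c_2(\cU^\vee)^2=2$ gives $\chi(\cU^\vee|_\Sigma)=4$, we conclude $n=h^0(\cU^\vee|_\Sigma)\ge 4$. For the $\tau$-conics with singular $\Sigma$, upper semicontinuity of $\ext^1$ along a flat family of conics gives $n\ge 4$ as well.

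The main obstacle is the degenerate situation: one must check that $\Sigma$ is always integral and normal (so that $\Hom_X(I_{C/\Sigma},I_{C/\Sigma})=k$ and the $\chi$-bookkeeping are unconditional), confirm $\Ext^3_X=\Ext^4_X=0$ without smoothness of $\Sigma$, and make the semicontinuity argument for $n\ge 3$ precise. Alternatively, one can bypass $\Sigma$ entirely and compute $\RHom(I_{C/\Sigma},I_{C/\Sigma})$ as the iterated cone built from $\RHom(I_C,I_{C/\Sigma})$ and $\RHom(I_\Sigma,I_{C/\Sigma})$ via the two exact sequences $0\to I_\Sigma\to I_C\to I_{C/\Sigma}\to 0$ and $0\to I_{C/\Sigma}\to\oh_\Sigma\to\oh_C\to 0$ together with Lemma~\ref{ICIS}; this is robust but requires verifying that several connecting maps have the expected rank, which is the real technical nuisance of the proof.
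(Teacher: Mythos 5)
Your route is genuinely different from the paper's. For (2) and (3) the paper applies $\Hom(-,\oh_X)$ and $\Hom(\oh_X,-)$ to $0\to I_\Sigma(H)\to I_C(H)\to I_{C/\Sigma}(H)\to 0$, while you use $0\to I_{C/\Sigma}(H)\to \oh_\Sigma(H)\to\oh_C(H)\to 0$ together with the fact that $\Sigma$ is a $(2,2)$ complete intersection in $\PP^4$; your version is correct, arguably more transparent, and does not need $\Sigma$ smooth (only that it is a complete intersection of the expected dimension, which holds for general $X$). For (1) the paper never touches the geometry of $\Sigma$: it feeds the four groups of Lemma~\ref{ICSexts1}'s companion Lemma~\ref{ICIS} into the three-column spectral sequence attached to $0\to I_\Sigma\to I_C\to I_{C/\Sigma}\to 0$, which kills everything in total degree $\geq 3$ on the $E_1$-page and then uses $\chi=2$. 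Your deformation-theoretic computation $\ext^1 = h^0(N_{\Sigma/X})=h^0(\cU^\vee|_\Sigma)\geq\chi(\cU^\vee|_\Sigma)=4$ is a nice bonus (it pins down $n=4$ generically, sharper than the stated $n\geq 3$), and your second-to-last paragraph correctly identifies the paper's actual strategy as the "robust" alternative. Note, though, that the "technical nuisance" you fear there is largely absent: for the stated shape one never needs the ranks of the connecting maps, since $\hom=1$ is automatic, the degree-$\geq 3$ vanishing is read off the $E_1$-page, and $\chi=2$ does the rest.

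The genuine gap is in your treatment of (1) for $\tau$-conics whose surface $\Sigma=\mathrm{Gr}(2,V_4)\cap X$ is singular (or for which $C$ fails to be Cartier on $\Sigma$); such conics do occur even on a general $X$, and the lemma is asserted for all $\tau$-conics. Your identification $\Ext^i_X(I_{C/\Sigma},I_{C/\Sigma})=\bigoplus_{p+q=i}H^p(\Sigma,\bigwedge^q N_{\Sigma/X})$ requires $I_{C/\Sigma}\cong\oh_\Sigma(-C)$ to be a line bundle on a smooth $\Sigma$, and — more seriously — the vanishing $\Ext^3_X(I_{C/\Sigma},I_{C/\Sigma})=\Ext^1_X(I_{C/\Sigma},I_{C/\Sigma}(-2H))^\vee=0$ cannot be rescued by semicontinuity: $\ext^3$ is \emph{upper} semicontinuous, so its vanishing at a general point says nothing at a special point. (Semicontinuity does legitimately give the lower bound $n\geq 4$ at special points, but that is the only part of the degeneration argument that works in the direction you need.) Without $\ext^3=0$ the identity $\ext^2=\ext^1+1$ collapses, and with it the input to Proposition~\ref{stability of tau_conic}. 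To close the gap you would either have to prove $\Ext^1(I_{C/\Sigma},I_{C/\Sigma}(-2H))=0$ directly for all $\tau$-conics (e.g.\ via $0\to I_{C/\Sigma}\to\oh_\Sigma\to\oh_C\to 0$ and the Koszul resolution $0\to\oh_X(-H)\to\cU\to I_\Sigma\to 0$, which is valid whenever the section of $\cU^\vee$ is regular), or simply switch to the paper's spectral-sequence argument, which is uniform in $C$ and $\Sigma$ precisely because Lemma~\ref{ICIS} is.
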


\begin{proof}
Note that $\chi(I_{C/\Sigma}, I_{C/\Sigma})=2$. Then (1) follows from Lemma \ref{ICIS} and applying \cite[Lemma 2.27]{pirozhkov2020admissible} to the exact sequence $0\to I_{\Sigma}\to I_{C}\to I_{C/\Sigma}\to 0$.

(2) and (3) follow from applying $\Hom(-, \oh_X)$ and $\Hom(\oh_X, -)$ to the exact sequence 
$0\to I_\Sigma(H)\to I_C(H)\to I_{C/\Sigma}(H)\to 0.$ 
\end{proof}

\begin{proposition}
\label{stability of tau_conic}
Let $X$ be a very general ordinary GM fourfold and $C$ be a $\tau$-conic on $X$. Then  $\pr_1(\oh_C(H))$ is stable with respect to every stability condition on $\Ku(X)$. 
\end{proposition}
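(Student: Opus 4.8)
The plan is to apply Proposition~\ref{stability_object_ext1_small} to $E=\pr_1(\oh_C(H))$. We already know $\ch(E)=\Lambda_1$, hence $\chi(E,E)=-2$, so it suffices to bound $\ext^1(E,E)\le 9$. By Proposition~\ref{proj-tau} we have $E\cong\bL_{\oh_X}(I_{C/\Sigma}(H))[1]$, so from the triangle defining $\bL_{\oh_X}$,
\[
\RHom(\oh_X, I_{C/\Sigma}(H))\otimes\oh_X \to I_{C/\Sigma}(H)\to \bL_{\oh_X}(I_{C/\Sigma}(H)),
\]
together with Lemma~\ref{ICSexts1}(2)--(3), I would compute $\RHom(E,E)$ in terms of $\RHom(I_{C/\Sigma}(H), I_{C/\Sigma}(H))=\RHom(I_{C/\Sigma},I_{C/\Sigma})$, which is controlled by Lemma~\ref{ICSexts1}(1). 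Concretely, writing $J:=I_{C/\Sigma}(H)$ and $W:=\RHom(\oh_X,J)=k^2$, the object $E[-1]=\bL_{\oh_X}J$ sits in a triangle $W\otimes\oh_X\to J\to E[-1]$, and applying $\RHom(-,E[-1])$ and then $\RHom(\bL_{\oh_X}J,-)$ (using $\RHom(\oh_X,\bL_{\oh_X}J)=0$) reduces everything to $\RHom(J,J)$, $\RHom(J,\oh_X)=\RHom(I_{C/\Sigma}(H),\oh_X)$ and $\RHom(\oh_X,\oh_X)=k$.

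The key computation is then: from Lemma~\ref{ICSexts1}(1), $\RHom(J,J)=k\oplus k^n[-1]\oplus k^{n+1}[-2]$ for some $n\ge 3$; from (2), $\RHom(J,\oh_X)=k^2[-2]$; from (3), $\RHom(\oh_X,J)=k^2$. Feeding these into the two triangles, the contribution of the "correction" terms should cancel in a way that makes $\ext^1(E,E)$ independent of $n$ — this is the expected mechanism, mirroring the cubic-fourfold case in \cite{zhang2020bridgeland}. I anticipate the answer $\ext^1(E,E)=$ something like $8$ or $9$ (it must be even since, once we know $E$ lies in a heart up to shift, $\ext^1$ of an object of the relevant Mukai type is even — but actually Proposition~\ref{stability_object_ext1_small} only needs $\le 9$, so an explicit small bound suffices). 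So the heart of the argument is a careful bookkeeping of the long exact sequences of the two mutation triangles; I would lay out the $\RHom$ groups degree by degree.

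The main obstacle is pinning down $\ext^2(J,\oh_X)$-type terms and the connecting maps precisely enough to get a genuine \emph{inequality} $\ext^1(E,E)\le 9$ rather than just an estimate that could in principle be larger because $n$ is only bounded below. The resolution should be that $\ext^1(E,E)$ is in fact expressible purely through the map $W\otimes\RHom(\oh_X,\oh_X)\to\RHom(\oh_X,J)$ and the analogous dual map, both of which are isomorphisms $k^2\xrightarrow{\sim}k^2$ by the explicit description of $W$; once these are isomorphisms, the $n$-dependent pieces of $\RHom(J,J)$ drop out of the relevant cohomological degree entirely. After the bound $\ext^1(E,E)\le 9$ is established, Proposition~\ref{stability_object_ext1_small} immediately gives that $E=\pr_1(\oh_C(H))$ is $\sigma$-stable for every stability condition $\sigma$ on $\Ku(X)$, which is the claim; I would close by invoking that proposition.
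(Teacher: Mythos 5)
Your overall strategy is the paper's: reduce to Proposition~\ref{stability_object_ext1_small} via $\chi(E,E)=-2$, use Proposition~\ref{proj-tau} to write $E=\pr_1(\oh_C(H))$ as $\bL_{\oh_X}(I_{C/\Sigma}(H))[1]$, and extract $\RHom(E,E)$ from the mutation triangle $\oh_X^{\oplus 2}\to I_{C/\Sigma}(H)\to E[-1]$ together with Lemma~\ref{ICSexts1}. However, there is a genuine gap exactly at the point you flag. Writing $J:=I_{C/\Sigma}(H)$ and $W:=\RHom(\oh_X,J)=k^2$, and using $\RHom(\oh_X,E)=0$ to identify $\RHom(E,E)\cong\RHom(J,E[-1])$, the long exact sequence of the triangle gives
\[
0\to k^n\to\Ext^1(E,E)\to W\otimes\Ext^2(J,\oh_X)\xrightarrow{\ \delta\ }\Ext^2(J,J)\to\Ext^2(E,E)\to 0,
\]
with $W\otimes\Ext^2(J,\oh_X)\cong k^4$ and $\Ext^2(J,J)\cong k^{n+1}$, so $\ext^1(E,E)=n+\dim\ker\delta$ where $n$ is only known to satisfy $n\ge 3$. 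The map you would need to control is $\delta$, namely composition on $\Ext^2$ with the evaluation $W\otimes\oh_X\to J$; the fact that $W\otimes\RHom(\oh_X,\oh_X)\to\RHom(\oh_X,J)$ is an isomorphism says nothing about $\delta$, so the claimed cancellation of the $n$-dependent pieces is unsubstantiated, and as written your argument does not deliver the inequality $\ext^1(E,E)\le 9$.

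The paper closes this gap without computing $\delta$ or $n$ at all: the same long exact sequences do give unconditionally that $\Hom(E,E)=k$ and $\Ext^i(E,E)=0$ for $i\notin\{0,1,2\}$; then Serre duality in $\Ku(X)$ (where $S_{\Ku(X)}\cong[2]$) forces $\Ext^2(E,E)\cong\Hom(E,E)^{\vee}=k$, and $\chi(E,E)=-2$ then pins down $\ext^1(E,E)=4$ exactly. (A posteriori this shows $n\le 4$, so the one-sided bound in Lemma~\ref{ICSexts1}(1) was never the issue.) Replace your proposed cancellation mechanism with this Serre-duality-plus-Euler-characteristic step; with that substitution the rest of your outline matches the paper's proof.
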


\begin{proof}
In Proposition~\ref{proj-tau}, we have $\mathrm{pr}_1(\oh_C(H))\cong\bL_{\oh_X}(I_{C/\Sigma}(H))[1]$. Then we apply \cite[Lemma 2.27]{pirozhkov2020admissible}
to the triangle 
$$\oh_X^{\oplus 2}\to I_{C/\Sigma}(H)\to \pr_1(\oh_C(H))[-1].$$ 
From Lemma \ref{ICSexts1}, for $i\notin \{0,1,2\}$, $$\Hom(\pr_1(\oh_C(H)), \pr_1(\oh_{C}(H)))=k, \quad\Ext^i(\pr_1(\oh_C(H)), \pr_1(\oh_{C}(H)))=0.$$ 
Then by Serre duality in $\Ku(X)$, we have $$\Ext^2(\pr_1(\oh_C(H)), \pr_1(\oh_{C}(H)))=\Hom(\pr_1(\oh_C(H)), \pr_1(\oh_{C}(H)))=k.$$
Since $\chi(\pr_1(\oh_C(H)), \pr_1(\oh_{C}(H)))=-2$, we obtain $\Ext^1(\pr_1(\oh_C(H),\pr_1(\oh_C(H))=k^4$. Then by Proposition~\ref{stability_object_ext1_small}, $\pr_1(\oh_C(H))$ is stable with respect to every stability condition on $\Ku(X)$. 
\end{proof}

\subsection{Stability of projection objects of $\rho$-conics}
Let $C$ be a $\rho$-conic on $X$, by Proposition~\ref{pr2-rho}, $\mathrm{pr}_2(I_C)\cong\mathrm{pr}_1(\cU)[1]$, where $\mathrm{pr}_1(\cU)$ fits into the triangle as in Proposition~\ref{projection_tauto}
$$\cU\rightarrow\mathrm{pr}_1(\cU)\rightarrow\bL_{\oh_X}(I_q(H))[-2].$$ 
Now we only need to prove $\mathrm{pr}_1(\cU)$ is stable.
\begin{lemma}\label{ext_group_pr_U}\leavevmode
\begin{enumerate}
    \item $\RHom(\cU,\bL_{\oh_X}(I_q(H))[-3])=k^m[-2]\oplus k^{m-3}[-3]$, for some integer $3\leq m\leq 25$.
    \item $\RHom(\cU,\mathrm{pr}_1(\cU))\cong\RHom(\pr_1(\cU),\mathrm{pr}_1(\cU))=k[0]\oplus k^4[-1]\oplus k[-2]$.
\end{enumerate}
\end{lemma}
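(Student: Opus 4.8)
The approach is to reduce both statements, via the triangle $\cU\to\pr_1(\cU)\to K_2[-1]$ of Proposition~\ref{projection_tauto} and the defining sequence $0\to K_2\to\oh_X^{\oplus 5}\to I_q(H)\to 0$ of $K_2$, to cohomology of homogeneous bundles on $X$ together with the numerical data recorded in Section~\ref{semi-orthogonal_GM}. For (1), since $K_2=\bL_{\oh_X}(I_q(H))[-1]$ we have $\bL_{\oh_X}(I_q(H))[-3]=K_2[-2]$, so it suffices to compute $\RHom(\cU,K_2)$. Applying $\RHom(\cU,-)$ to $0\to K_2\to\oh_X^{\oplus 5}\to I_q(H)\to 0$ and to the twist $0\to\cU(H)\to\cQ^\vee(H)\to I_q(H)\to 0$ of the isomorphism $\bL_{\cU}\cQ^\vee\cong I_q$ from Proposition~\ref{projection_tauto}, one expresses $\RHom(\cU,K_2)$ through $\RHom(\cU,\oh_X)=R\Gamma(X,\cU^\vee)$, $\RHom(\cU,\cU(H))$ and $\RHom(\cU,\cQ^\vee(H))$, each computed from the Koszul resolution of $X\subset\Gr(2,V_5)$ and Borel--Weil--Bott as in Lemma~\ref{coho-1}. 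The resulting long exact sequences show $\RHom(\cU,K_2)$ is concentrated in degrees $0,1$; the difference $\ext^0(\cU,K_2)-\ext^1(\cU,K_2)=3$ is the Euler characteristic $\chi(\cU,K_2)=3$, via Riemann--Roch with the Todd class of Section~\ref{semi-orthogonal_GM}. Finally $m=\ext^0(\cU,K_2)\le\dim\Hom(\cU,\oh_X^{\oplus 5})=5\,h^0(X,\cU^\vee)=25$ by left exactness of $\Hom$, while $m\ge 3$ because $\ext^1(\cU,K_2)=m-3\ge 0$; one may also note $\ext^4(\cU,K_2)=\hom(K_2,\cU(-2H))=0$ from $\mu$-stability, since $\mu(K_2)=-\tfrac14>\mu(\cU(-2H))=-\tfrac52$.

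For (2), applying $\RHom(\cU,-)$ to $\cU\to\pr_1(\cU)\to K_2[-1]$, exceptionality gives $\RHom(\cU,\cU)=k$, the connecting maps vanish since $\ext^{\ge 1}(\cU,\cU)=0$, and part (1) yields $\RHom(\cU,\pr_1(\cU))=k[0]\oplus k^m[-1]\oplus k^{m-3}[-2]$. Next I would show $\RHom(K_2,K_2)=k$: applying $\RHom(-,K_2)$ to $0\to K_2\to\oh_X^{\oplus 5}\to I_q(H)\to 0$ and using $\RHom(\oh_X,K_2)=0$ (proof of Proposition~\ref{projection_tauto}) reduces it to $\RHom(I_q(H),K_2)[1]$, and applying $\RHom(I_q(H),-)$ to the same sequence, with $\RHom(I_q(H),\oh_X)=0$ (Serre duality and $R\Gamma(\oh_X(-H))=R\Gamma(\oh_q(-H))=0$), reduces it to $\RHom(I_q,I_q)=k$, obtained from $0\to\cU\to\cQ^\vee\to I_q\to 0$ together with $\RHom(\cQ^\vee,\cU)$ and $\RHom(\cQ^\vee,\cQ^\vee)$, both read off from Lemma~\ref{coho-1} via the dual tautological sequence $0\to\cQ^\vee\to V_5^\vee\otimes\oh_X\to\cU^\vee\to 0$. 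Feeding $\RHom(K_2,\cU)=k[-2]$ (Lemma~\ref{coho_rho_prop}) and $\RHom(K_2,K_2)=k$ into $\RHom(K_2,-)$ of the triangle, the connecting map $\RHom(K_2,K_2[-1])\to\RHom(K_2,\cU)[1]$ is an isomorphism of one-dimensional spaces, because it is induced by the structural map $K_2[-2]\to\cU$, which is nonzero — otherwise $\pr_1(\cU)\cong\cU\oplus K_2[-1]$, contradicting $\pr_1(\cU)\in\Ku(X)$ and $\RHom(\cU,\oh_X)\ne 0$. Hence $\RHom(K_2,\pr_1(\cU))=0$, and applying $\RHom(-,\pr_1(\cU))$ to the triangle gives $\RHom(\pr_1(\cU),\pr_1(\cU))\cong\RHom(\cU,\pr_1(\cU))$. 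Since $\pr_1(\cU)\in\Ku(X)$ has Mukai vector $-\Lambda_1$ (as $\pr_2(I_C)\cong\pr_1(\cU)[1]$ for a $\rho$-conic $C$ by Proposition~\ref{pr2-rho} and $\ch(\pr_2(I_C))=\Lambda_1$), the Serre functor $[2]$ forces $\ext^0(\pr_1(\cU),\pr_1(\cU))=\ext^2(\pr_1(\cU),\pr_1(\cU))$, i.e.\ $1=m-3$; thus $m=4$ and both complexes equal $k[0]\oplus k^4[-1]\oplus k[-2]$, consistently with $\chi(\Lambda_1,\Lambda_1)=-2$.

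The main obstacle is computational rather than conceptual: the chain of nested long exact sequences, and especially the Borel--Weil--Bott bookkeeping for $R\Gamma(X,\cU^\vee)$, $\RHom(\cU,\cU(H))$, $\RHom(\cU,\cQ^\vee(H))$, $\RHom(\cQ^\vee,\cU)$ and $\RHom(\cQ^\vee,\cQ^\vee)$ via the Koszul resolution of $X$. The only non-formal inputs are that $\pr_1(\cU)\in\Ku(X)$ — used to force $K_2[-2]\to\cU$ nonzero and hence $\RHom(K_2,\pr_1(\cU))=0$ — and the identity $\ch(\pr_1(\cU))=-\Lambda_1$; together with Serre duality on $\Ku(X)$ these upgrade the bounds $3\le m\le 25$ of part (1) to the exact value $m=4$.
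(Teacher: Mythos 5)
Your proposal is correct and follows essentially the same route as the paper: part (1) is the same pair of exact sequences (the paper computes $\RHom(\cU,I_q(H))=k^{22}$ from $0\to\cU^\vee\to\cQ^\vee(H)\to I_q(H)\to 0$ and then uses the mutation triangle $\oh_X^{\oplus 5}\to I_q(H)\to\bL_{\oh_X}(I_q(H))$), and part (2) is pinned down by the same Serre-duality identity $\ext^0=\ext^2$ forcing $m-3=1$. The one place you work harder than necessary is the identification $\RHom(\cU,\pr_1(\cU))\cong\RHom(\pr_1(\cU),\pr_1(\cU))$: this is immediate from adjunction, since $K_2[-2]=\bL_{\oh_X}(I_q(H))[-3]$ lies in $\langle\oh_X,\cU^\vee,\oh_X(H),\cU^\vee(H)\rangle$ and hence $\RHom(K_2,\pr_1(\cU))=0$ by semiorthogonality, with no need for the computation of $\RHom(K_2,K_2)$ or the analysis of the connecting map. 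Relatedly, your stated reason that $\cU\notin\Ku(X)$, namely $\RHom(\cU,\oh_X)\neq 0$, tests the wrong orthogonality ($\Ku(X)$ is the \emph{right} orthogonal, so the relevant groups are $\RHom(\oh_X,\cU)$, $\RHom(\cU^\vee(H),\cU)$, etc.); the correct obstruction is $\RHom(\cU^\vee(H),\cU)=k[-4]\neq 0$, computed in the proof of Proposition~\ref{projection_tauto}. This is a fixable slip that does not affect the conclusion.
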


\begin{proof}
(1): Applying $\Hom(\cU,-)$ to the tautological exact sequence and using (1) of Lemma \ref{coho-1}, we see $\RHom(\cU, \cU^{\vee})=k^{24}[0]$. And we have $\RHom(\cU, \cQ^{\vee}(H))=k^{46}[0]$ by (6) of Lemma \ref{coho-1}. Therefore, applying $\mathrm{Hom}(\cU,-)$ to the short exact sequence
    $$0\rightarrow\cU^\vee\rightarrow\cQ^{\vee}(H)\rightarrow I_q(H)\rightarrow 0,$$
we get $\RHom(\cU,I_q(H))=k^{22}[0]$. Then we apply $\mathrm{Hom}(\cU,-)$ to the triangle
$$\oh_X^{\oplus5}\rightarrow I_q(H)\rightarrow\bL_{\oh_X}(I_q(H)).$$
Since $\RHom(\cU,\oh_X^{\oplus5})=k^{25}[0]$, by the long exact sequence we have $$\RHom(\cU,\bL_{\oh_X}(I_q(H)))=k^m[1]\oplus k^{m-3}[0],$$
for some integer $3\leq m\leq 25$. Then we obtain $$\RHom(\cU,\bL_{\oh_X}(I_q(H))[-3])=k^m[-2]\oplus k^{m-3}[-3].$$

(2): Since $\mathrm{pr}_1(\cU)$ fits into the triangle
$$\bL_{\oh_X}(I_q\otimes\oh_X(H))[-3]\rightarrow\cU\rightarrow\mathrm{pr}_1(\cU),$$
using (1) and $\RHom(\cU,\cU)=k[0]$, 
we have $\RHom(\cU,\mathrm{pr}_1(\cU))=k[0]\oplus k^m[-1]\oplus k^{m-3}[-2]$. By Serre duality in $\Ku(X)$ and adjunction, we have
\[\Hom(\cU, \pr_1(\cU))=\Hom(\pr_1(\cU), \pr_1(\cU))=\Hom(\pr_1(\cU), \pr_1(\cU)[2])=\Hom(\cU, \pr_1(\cU)[2]).\]
Therefore, we obtain $m-3=1$, which means $\RHom(\cU,\mathrm{pr}_1(\cU))=k[0]\oplus k^4[-1]\oplus k[-2]$. 
\end{proof}

\begin{proposition}
\label{stability_rho_conic}
Let $X$ be a very general ordinary GM fourfold and $C$ be a $\rho$-conic on~$X$, then $\mathrm{pr}_2(I_C)$ is stable with respect to every stability condition on $\Ku(X)$. 
\end{proposition}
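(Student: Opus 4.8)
The plan is to apply Proposition~\ref{stability_object_ext1_small} to the object $E:=\mathrm{pr}_1(\cU)$. By Proposition~\ref{pr2-rho} we have $\mathrm{pr}_2(I_C)\cong\mathrm{pr}_1(\cU)[1]$, so it suffices to prove that $\mathrm{pr}_1(\cU)$ is stable with respect to every stability condition on $\Ku(X)$, and stability is insensitive to the shift. Since $\mathrm{ch}(\mathrm{pr}_1(\cU))$ corresponds to a class of square $-2$ in $\mathcal{N}(\Ku(X))$ (indeed $\mathrm{pr}_1(\cU)$ lies in the $\Lambda_1$-component, by the character computation preceding Proposition~\ref{proj-tau} together with Proposition~\ref{pr2-rho}), we have $\chi(\mathrm{pr}_1(\cU),\mathrm{pr}_1(\cU))=-2$. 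Therefore, by Proposition~\ref{stability_object_ext1_small}, it is enough to verify the numerical bound $\mathrm{ext}^1(\mathrm{pr}_1(\cU),\mathrm{pr}_1(\cU))\leq 9$.

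The key input is Lemma~\ref{ext_group_pr_U}(2), which gives
\[\RHom(\mathrm{pr}_1(\cU),\mathrm{pr}_1(\cU))=k[0]\oplus k^4[-1]\oplus k[-2].\]
In particular $\mathrm{ext}^1(\mathrm{pr}_1(\cU),\mathrm{pr}_1(\cU))=4\leq 9$. Hence Proposition~\ref{stability_object_ext1_small} applies directly and yields that $\mathrm{pr}_1(\cU)$ is stable with respect to every stability condition $\sigma$ on $\Ku(X)$. Consequently $\mathrm{pr}_2(I_C)\cong\mathrm{pr}_1(\cU)[1]$ is stable as well.

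The only real work, therefore, is already packaged in Lemma~\ref{ext_group_pr_U}, and within that the delicate point is pinning down the integer $m$: one first computes $\RHom(\cU,I_q(H))=k^{22}$ via the defining sequence $0\to\cU^\vee\to\cQ^\vee(H)\to I_q(H)\to 0$ and Lemma~\ref{coho-1}, then feeds this into the triangle $\oh_X^{\oplus 5}\to I_q(H)\to\bL_{\oh_X}(I_q(H))$ to get $\RHom(\cU,\bL_{\oh_X}(I_q(H))[-3])=k^m[-2]\oplus k^{m-3}[-3]$ for some $3\le m\le 25$, and finally combines this with the triangle defining $\mathrm{pr}_1(\cU)$ from Proposition~\ref{projection_tauto} and the self-duality forced by $S_{\Ku(X)}\cong[2]$ to force $m-3=1$. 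The main obstacle is thus the cohomological bookkeeping in Lemma~\ref{ext_group_pr_U} — in particular ensuring the connecting maps in the long exact sequences behave as claimed so that the Serre-duality constraint $\Hom(\cU,\mathrm{pr}_1(\cU))\cong\Hom(\mathrm{pr}_1(\cU),\mathrm{pr}_1(\cU)[2])$ pins $m$ down exactly — after which the stability statement itself is an immediate application of Proposition~\ref{stability_object_ext1_small}.
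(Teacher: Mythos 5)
Your proof is correct and follows exactly the paper's argument: reduce to $\mathrm{pr}_1(\cU)$ via Proposition~\ref{pr2-rho}, read off $\mathrm{ext}^1(\mathrm{pr}_1(\cU),\mathrm{pr}_1(\cU))=4$ from Lemma~\ref{ext_group_pr_U}, and apply Proposition~\ref{stability_object_ext1_small}. Your additional remarks on how $m$ is pinned down in Lemma~\ref{ext_group_pr_U} accurately reflect where the real work lies.
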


\begin{proof}
By Lemma~\ref{ext_group_pr_U}, $\mathrm{Ext}^1(\mathrm{pr}_1(\cU),\mathrm{pr}_1(\cU))=k^4$. Then using Proposition~\ref{stability_object_ext1_small}, $\mathrm{pr}_1(\cU)$ is stable, which implies $\mathrm{pr}_2(I_C)$ is stable with respect to every stability condition on $\Ku(X)$. 
\end{proof}

\subsection{Stability of projection objects of $\sigma$-conics}
\begin{proposition}
\label{stability_projection_sigma_conic}
Let $X$ be a very general GM fourfold and $C$ be a $\sigma$-conic on $X$, then $\mathrm{pr}_2(I_C)$ is stable with respect to every stability condition on $\Ku(X)$. 
\end{proposition}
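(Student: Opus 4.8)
The strategy is the same as for the $\tau$- and $\rho$-conics: we compute $\RHom(\pr_2(I_C),\pr_2(I_C))$ explicitly, show that $\mathrm{ext}^1(\pr_2(I_C),\pr_2(I_C))\leq 9$, and then invoke Proposition~\ref{stability_object_ext1_small}, which applies since $\ch(\pr_2(I_C))=\Lambda_1$ and $\chi(\Lambda_1,\Lambda_1)=-2$. By Proposition~\ref{pr2-sigma} there is a triangle
\[\mathbb{D}(I_q(H))[1]\to \pr_2(I_C)\to \cQ^{\vee},\]
where $q$ is the unique $\sigma$-quadric on $X$, and moreover $\mathbb{D}(I_q(H))\cong\bR_{\cU}\cQ(-H)$. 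Applying $\RHom(-,-)$ to the two terms of this triangle and using the octahedral axiom, the computation of $\RHom(\pr_2(I_C),\pr_2(I_C))$ reduces to the four pieces $\RHom(\cQ^{\vee},\cQ^{\vee})$, $\RHom(\mathbb{D}(I_q(H)),\mathbb{D}(I_q(H)))$, $\RHom(\cQ^{\vee},\mathbb{D}(I_q(H))[1])$ and $\RHom(\mathbb{D}(I_q(H))[1],\cQ^{\vee})$.

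Since $\mathbb{D}$ is a contravariant autoequivalence of $\D^b(X)$ with $\mathbb{D}^2\cong\mathrm{id}$, the last three pieces can be rewritten as $\RHom(I_q(H),I_q(H))$, $\RHom(I_q(H),\cQ)[1]$ and $\RHom(\cQ,I_q(H))[-1]$ respectively. All four are then computed from the exact sequence $0\to\cU\to\cQ^{\vee}\to I_q\to 0$ of Proposition~\ref{projection_tauto} and its twist by $H$, together with Lemma~\ref{coho-1} and the Koszul resolution of $X$ in $\Gr(2,V_5)$ followed by Borel--Weil--Bott; for instance $\RHom(\cQ^{\vee},\cQ^{\vee})$, $\RHom(\cU^{\vee},\cQ(-H))$ and $\RHom(\cQ^{\vee},\cQ(-H))$ are obtained this way. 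Feeding these into the long exact sequences attached to the triangle and keeping track of the connecting maps, we expect to obtain $\RHom(\pr_2(I_C),\pr_2(I_C))=k[0]\oplus k^{4}[-1]\oplus k[-2]$, so that $\mathrm{ext}^1(\pr_2(I_C),\pr_2(I_C))=4\leq 9$; in any event the bound $\leq 9$ is all that is needed, and Proposition~\ref{stability_object_ext1_small} then gives the stability of $\pr_2(I_C)$ with respect to every stability condition on $\Ku(X)$.

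The main obstacle is the bookkeeping of the two ``cross'' terms $\RHom(\cQ,I_q(H))$ and $\RHom(I_q(H),\cQ)$: these involve the $\sigma$-quadric $q$ and are a priori large-dimensional, so one must verify that the relevant connecting maps in the long exact sequences are injective or surjective in the appropriate degrees to cut them down, and this is where the geometry of $q$ (the unique quadric surface in $X$, cut out as the degeneracy locus of a section of $\cQ$, cf. Proposition~\ref{projection_tauto}) enters. As an alternative route one can instead prove stability of $\pr_1(\oh_C(H))\cong T(\pr_2(I_C))[2]$ (Lemma~\ref{T-action} and Proposition~\ref{involution_functor_original}); since $\mathbb{D}(I_q(H))$ and the sheaf $K_2=\bL_{\oh_X}(I_q(H))[-1]$ appearing in the $\rho$-conic case differ only by $\mathbb{D}$ and a mutation, the required $\Ext$-computation becomes closely parallel to the one carried out in Proposition~\ref{stability_rho_conic}, and the bound $\mathrm{ext}^1\leq 9$ can be extracted from there.
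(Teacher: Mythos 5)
Your first route does not reach a proof: after reducing to the four $\RHom$ pieces you say you ``expect to obtain'' $k[0]\oplus k^{4}[-1]\oplus k[-2]$ and explicitly leave unverified the behaviour of the connecting maps that would cut down the two cross terms $\RHom(\cQ,I_q(H))$ and $\RHom(I_q(H),\cQ)$. That verification is the entire content of the computation, so the bound $\ext^1\leq 9$ is not established as written. Your alternative route is the right one --- it is in fact the paper's proof --- but you stop one step short: the point is not that the $\sigma$-conic computation is ``closely parallel'' to the $\rho$-conic one, it is literally the same, because $T(\pr_2(I_C))\cong \pr_1(\cU)[1]$. To see this, apply $T=\bL_{\oh_X}\circ\mathbb{D}$ to the triangle $\mathbb{D}(I_q(H))[1]\to\pr_2(I_C)\to\cQ^{\vee}$ of Proposition~\ref{pr2-sigma}: since $\mathbb{D}(\cQ^{\vee})\cong\cQ$, $\mathbb{D}^2\cong\mathrm{id}$, and $\bL_{\oh_X}\cQ\cong\cU[1]$ (from $0\to\cU\to\oh_X^{\oplus 5}\to\cQ\to 0$ and $H^{\bullet}(X,\cU)=0$), one obtains a triangle $\cU[1]\to T(\pr_2(I_C))\to K_2$ of exactly the same shape as the triangle $\cU[1]\to\pr_1(\cU)[1]\to K_2$ obtained by shifting Proposition~\ref{projection_tauto}. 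Both are non-split (a splitting would force $\cU[1]\in\Ku(X)$), and $\Ext^1(K_2,\cU[1])=\Ext^2(K_2,\cU)=k$ by Lemma~\ref{coho_rho_prop}, so the two middle terms are isomorphic.

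Once this identification is in hand there is nothing left to compute: $T$ is an involutive autoequivalence of $\Ku(X)$ (Proposition~\ref{involution_functor_original}), so it preserves $\ext^1(-,-)$ and permutes the set of stability conditions on $\Ku(X)$; hence $\ext^1(\pr_2(I_C),\pr_2(I_C))=\ext^1(\pr_1(\cU),\pr_1(\cU))=4$ by Lemma~\ref{ext_group_pr_U}, and stability with respect to every stability condition follows from Proposition~\ref{stability_rho_conic}, or directly from Proposition~\ref{stability_object_ext1_small}. In short, the missing idea is the exact identification of the $\sigma$-conic projection object with the $T$-image of the $\rho$-conic projection object; this replaces your unfinished $\Ext$ bookkeeping by a one-line reduction to the case you have already treated.
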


\begin{proof}
By Proposition~\ref{pr2-sigma}, the object $\mathrm{pr}_2(I_C)$ fits into the triangle
$$\mathbb{D}(I_q(H))[1]\rightarrow\mathrm{pr}_2(I_C)\rightarrow\cQ^{\vee}.$$
It is easy to check that $T(\mathrm{pr}_2(I_C))\cong\mathrm{pr}_1(\cU)[1]$. Then the result follows from Proposition \ref{stability_rho_conic}.
\end{proof}

\begin{remark}
\label{involution_switch_rho_and_sigma_conic}
For any $\rho$-conic $C$, $\mathrm{pr}_2(I_C)\cong\mathrm{pr}_1(\cU)[1]$, for any $\sigma$-conic $C$, we have  $T(\mathrm{pr}_2(I_C))\cong\mathrm{pr}_1(\cU)[1]$. Indeed, it is easy to check $T(\mathrm{pr}_1(\cU)[1])\ncong\mathrm{pr}_1(\cU)[1]$. This means that the morphism $p$ induced by $\mathrm{pr}_2$ contracts the locus $\sigma$-conics and $\rho$-conics in $F_g(X)$ to two different points in the moduli space $\mathcal{M}_{\sigma}(\Ku(X),\Lambda_1)$. Moreover, the induced action of $T$ on $\mathcal{M}_{\sigma}(\Ku(X),\Lambda_1)$ will take one point to another. 
\end{remark}

\section{Bridgeland moduli spaces and the double EPW sextics}\label{moduli_spaces_construction}
\subsection{Moduli space of stable objects $\mathcal{M}_{\sigma}(\Ku(X),\Lambda_1)$}
For a very general GM fourfold~$X$, let $F_g(X)$ be the Hilbert scheme of conics on $X$.
In this section, we show that the projection functor $\mathrm{pr}_1:\D^b(X)\rightarrow\Ku(X)$(or equivalently, $\mathrm{pr}_2$) induces a dominant proper morphism~$p$ from $F_g(X)$ to the Bridgeland moduli space $\mathcal{M}_{\sigma}(\Ku(X),\Lambda_1)$. Moreover, this morphism~$p$ is compatible with the morphism $f:F_g(X)\rightarrow\widetilde{Y}_{A^{\perp}}$ defined in \cite[Section 4.4]{iliev2011fano}. In particular, we show that  $\mathcal{M}_{\sigma}(\Ku(X),\Lambda_1)\cong\widetilde{Y}_{A^{\perp}}$ and $\mathcal{M}_{\sigma}(\Ku(X),\Lambda_2)\cong\widetilde{Y}_{A}$.

By Theorem \ref{pr_stable_thm}, the object $\mathrm{pr}_1(\oh_C(H))$ is $\sigma$-stable for any conic $C\in F_g(X)$. On the other hand, $F_g(X)$ admits a universal family and the functor $\mathrm{pr}_1$ is of Fourier--Mukai type. By the standard argument as in \cite[Theorem 3.9]{li2018twisted}, the projection functor $\mathrm{pr}_1$ induces a morphism $p:F_g(X)\rightarrow\mathcal{M}_{\sigma}(\Ku(X),\Lambda_1)$. According to Remark~\ref{involution_switch_rho_and_sigma_conic}, the morphism $p$ contracts $\rho$-conics and $\sigma$-conics to two different points in $\mathcal{M}_{\sigma}(\Ku(X),\Lambda_1)$. Next, when restricting on the locus of $\tau$-conics, we show that $p$ is a $\mathbb{P}^1$-bundle.

We first review the construction of $f: F_g(X)\rightarrow \widetilde{Y}_{A^\perp}$. In \cite[Section 4.4]{iliev2011fano}, $f(C)=f(C')$ if and only if

\begin{enumerate}
    \item there exists some  
$V_4\subset V_5$ such that $C,C'\subset \mathrm{Gr}(2,V_4)\cap X$, 

\item the planes $\langle C \rangle$ and $\langle C' \rangle$ in $ \mathbb{P}(\bigwedge^2 V_4)\cap H\cong \mathbb{P}^4$ are contained in a same quadric $Q'$, where $Q'$ is in the pencil  $|P_{V_4},Q_{V_4}|$, where $Q_{V_4}:=Q\cap \mathbb{P}(\bigwedge^2 V_4)\cap H$ and $P_{V_4}:=\mathrm{Gr}(2,V_4)\cap H$,

\item $\langle C \rangle$ and $\langle C' \rangle$ are linearly equivalent divisors on $Q'$.
\end{enumerate}



    
    


\begin{proposition}\label{contracts_tau_conic_by_P1}
Let $C$ and $C'$ be two $\tau$-conics on $X$. Then $f(C)=f(C')$ if and only if $\Hom(\pr_1(\oh_{C}(H)), \pr_1(\oh_{C'}(H)))\neq 0$.
\end{proposition}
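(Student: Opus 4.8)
The plan is to relate the map $f$ of Iliev–Manivel directly to the projection objects. Recall from Proposition~\ref{proj-tau} that for a $\tau$-conic $C$ one has $\pr_1(\oh_C(H))\cong\bL_{\oh_X}(I_{C/\Sigma}(H))[1]$, where $\Sigma=\Sigma_C$ is the (unique) surface $\mathrm{Gr}(2,V_4)\cap X$ containing $C$; equivalently $\pr_1(\oh_C(H))$ fits into the triangle $\oh_X^{\oplus 2}\to I_{C/\Sigma}(H)\to\pr_1(\oh_C(H))[-1]$ coming from Lemma~\ref{ICSexts1}(3). The first step is to recall from \cite{iliev2011fano} the precise description of the fibres of $f$: a fibre $f^{-1}(f(C))$ is, on the open locus of $\tau$-conics, a $\mathbb{P}^1$ of conics, and geometrically this $\mathbb{P}^1$ is the family of conics cut out on the surface $\Sigma_C$ (together with the fixed divisor class data); in particular $f(C)=f(C')$ forces $\Sigma_C=\Sigma_{C'}=:\Sigma$ and $C,C'$ to move in a single linear system on $\Sigma$. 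So the first half of the argument reduces the statement to a computation entirely on the fixed surface $\Sigma$.

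Next I would compute $\Hom(\pr_1(\oh_C(H)),\pr_1(\oh_{C'}(H)))$. Using the triangle above and the fact that $\pr_1(\oh_C(H))\in\Ku(X)$ is left-orthogonal and right-orthogonal to $\oh_X$, applying $\Hom(-,\pr_1(\oh_{C'}(H)))$ and $\Hom(I_{C/\Sigma}(H),-)$ to the two defining triangles shows
\[
\Hom(\pr_1(\oh_C(H)),\pr_1(\oh_{C'}(H)))\cong \Hom(I_{C/\Sigma}(H),I_{C'/\Sigma}(H))\cong\Hom(I_{C/\Sigma},I_{C'/\Sigma}),
\]
once one checks the correction terms $\RHom(\oh_X,I_{C'/\Sigma}(H))=k^2$ and $\RHom(I_{C/\Sigma}(H),\oh_X)=k^2[-2]$ from Lemma~\ref{ICSexts1} do not contribute in degree $0$ — this is a diagram chase with the long exact sequences, using that $\oh_X\notin\Ku(X)$ and semiorthogonality. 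Then, since $I_{C/\Sigma}$ and $I_{C'/\Sigma}$ are ideal sheaves of conics inside the fixed surface $\Sigma$, a nonzero morphism between them (for $C\neq C'$) exists if and only if $C$ and $C'$ are linearly equivalent on $\Sigma$, i.e. lie in the same $\mathbb{P}^1=|\,\oh_\Sigma(C)\,|$; and when $C=C'$ the Hom is obviously $k\neq 0$. Combining this with the description of the fibres of $f$ from the first step gives both implications.

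The main obstacle I expect is pinning down the equivalence "$f(C)=f(C')\iff C\sim C'$ on $\Sigma_C$" with enough precision — i.e. extracting from \cite{iliev2011fano} (Proposition~\ref{IM11contraction} and its proof) that the $\mathbb{P}^1$-fibres of $f$ are exactly the linear systems of conics on the surfaces $\Sigma=\mathrm{Gr}(2,V_4)\cap X$, including the subtle point that a $\tau$-conic $C$ determines $\Sigma_C$ uniquely (this is where $\hom(\cU,I_C)=1$ from Lemma~\ref{classify-conic} enters) so that $f(C)=f(C')$ indeed implies $\Sigma_C=\Sigma_{C'}$. A secondary technical point is the vanishing $\Ext^1(I_{C/\Sigma},I_{C'/\Sigma})$-type controls needed to be sure the Hom-computation is not accidentally inflated by degenerate (non-reduced or reducible) conics; these are handled exactly as the reduced/non-reduced case analysis in Lemma~\ref{coho-2} and Lemma~\ref{ICSexts1}, so I would treat the smooth case in detail and remark that the degenerate cases are analogous. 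Once these geometric inputs are in place, the cohomological computation is routine and the proposition follows.
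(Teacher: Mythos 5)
Your proposal follows essentially the same route as the paper: reduce $\Hom(\pr_1(\oh_C(H)),\pr_1(\oh_{C'}(H)))$ to $\Hom$ between the relative ideal sheaves via the defining triangles and semiorthogonality, then match linear equivalence of conics on the surface $\Sigma=\mathrm{Gr}(2,V_4)\cap X$ with the fibres of the Iliev--Manivel map $f$. The cohomological reduction and the use of the two-ruling description of the singular quadric $Q'$ are exactly what the paper does.

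There is, however, one step you cannot skip, and it sits in the direction you did not flag. For the implication ``$\Hom\neq 0 \Rightarrow f(C)=f(C')$'' the reduction only gives you $\Hom(I_{C/\Sigma_C}, I_{C'/\Sigma_{C'}})\neq 0$ where a priori $\Sigma_C\neq\Sigma_{C'}$; your displayed identification already writes both ideal sheaves relative to a single $\Sigma$, which presupposes what must be proved. (Your ``main obstacle'' paragraph addresses the geometric direction, namely that $f(C)=f(C')$ forces $\Sigma_C=\Sigma_{C'}$, but that is the easy half; the uniqueness of $\Sigma_C$ from $\hom(\cU,I_C)=1$ does not by itself rule out a nonzero map between ideal sheaves relative to two different surfaces.) The paper closes this by a separate claim: applying the spectral sequence of \cite[Lemma 2.27]{pirozhkov2020admissible} to the sequences $0\to I_{\Sigma/\mathbb{P}^4}\to I_{C/\mathbb{P}^4}\to I_{C/\Sigma}\to 0$ and its primed analogue, a nonzero element of $\Hom(I_{C/\Sigma},I_{C'/\Sigma'})$ produces a nonzero element of $\Hom(I_{\Sigma/\mathbb{P}^4},I_{\Sigma'/\mathbb{P}^4})$, which forces $\Sigma=\Sigma'$. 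With that lemma inserted, the rest of your argument goes through as written.
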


\begin{proof}

First we assume that $C\subset \Sigma$ and $C'\subset \Sigma'$, where the two surfaces $\Sigma:=\mathrm{Gr}(2,V_4)\cap X$ and $\Sigma':=\mathrm{Gr}(2,V_4')\cap X$ are zero locus of two sections of $\cU^\vee$. 

From the definition of left mutation and Proposition~\ref{proj-tau}, we have two triangles $$\oh_X^{\oplus 2}\to I_{C/\Sigma}(H)\to \pr_1(\oh_C(H))[-1]$$ and $$\oh_X^{\oplus 2}\to I_{C'/\Sigma'}(H)\to \pr_1(\oh_{C'}(H))[-1].$$
If we apply \cite[Lemma 2.27]{pirozhkov2020admissible} to these two triangles, we obtain a spectral sequence with the first page $E^{p,q}_1$ in the form

$$       \begin{array}{c|cc}
         \vdots & \vdots & \vdots  \\
	     \Ext^1(I_{C/\Sigma}(H), \oh_X^{\oplus 2}) & \Ext^1(\oh_X^{\oplus 2}, \oh_X^{\oplus 2})\oplus \Ext^1(I_{C/\Sigma}(H), I_{C'/\Sigma'}(H)) & \Ext^1(\oh_X^{\oplus 2},I_{C'/\Sigma'}(H)) \\
	     \Hom(I_{C/\Sigma}(H), \oh_X^{\oplus 2}) & \Hom(\oh_X^{\oplus 2}, \oh_X^{\oplus 2})\oplus \Hom(I_{C/\Sigma}(H), I_{C'/\Sigma'}(H)) & \Hom(\oh_X^{\oplus 2},I_{C'/\Sigma'}(H)) \\ \hline
	     0 &  0 & 0
	\end{array}$$
Since $\Hom(I_{C/\Sigma}(H), \oh_X^{\oplus 2})=\Ext^1(I_{C/\Sigma}(H), \oh_X^{\oplus 2})=0$ and $\pr_1(\oh_{C'}(H))\in \Ku(X)$,
\[\Hom(\pr_1(\oh_C(H)), \pr_1(\oh_{C'}(H)))=E^{0,0}_{\infty}=\ker(E^{0,0}_1\to E^{1,0}_1)=\Hom(I_{C/\Sigma}, I_{C'/\Sigma'}).\] 
Then we need to prove that $\Hom(I_{C/\Sigma}, I_{C'/\Sigma'})\neq 0$ if and only if $f(C)=f(C')$.


First we claim that if  $\Hom(I_{C/\Sigma}, I_{C'/\Sigma'})\neq 0$, then $\Sigma=\Sigma'$, i.e., $\Hom(I_{\Sigma/\mathbb{P}^4}, I_{\Sigma'/\mathbb{P}^4})\neq 0$. Indeed, this follows from applying \cite[Lemma 2.27]{pirozhkov2020admissible} to the exact sequences $$0\to I_{\Sigma/\mathbb{P}^4}\to I_{C/\mathbb{P}^4}\to I_{C/\Sigma}\to 0$$and $$0\to I_{\Sigma'/\mathbb{P}^4}\to I_{C'/\mathbb{P}^4}\to I_{C'/\Sigma'}\to 0.$$

Assume that $f(C)=f(C')$. Then we have $\Sigma=\Sigma'$, and the planes $\langle C \rangle$ and $\langle C' \rangle$ in are contained in a same quadric $Q'$, where $Q'$ is in the pencil  $|P_{V_4},Q_{V_4}|$. As in \cite[Proposition 4.9]{iliev2011fano}, $Q'$ is either a cone  over a smooth quadric surface and planes in $Q'$ are parametrized by two projective lines, or a double cone over a smooth conic and planes in $Q'$ are parametrized by that smooth conic. Thus, there is a one-to-one correspondence between a $\tau$-conic $C$ with $\langle C\rangle \subset Q'$ and a plane contained in $Q'$.
Since $f(C)=f(C')$, from the construction of $f$, we know that $\langle C \rangle$ and $\langle C' \rangle$ are linearly equivalent and hence come from the same family of planes in $Q'$. This means that they are linearly equivalent as Weil divisors in $\Sigma$, which implies $I_{C/\Sigma}\cong I_{C'/\Sigma}$. Hence we have $\Hom(I_{C/\Sigma}, I_{C'/\Sigma})\neq 0$.

Conversely, if $\Hom(I_{C/\Sigma}, I_{C'/\Sigma'})\neq 0$, then we know that $\Sigma=\Sigma'$ as we claimed above. Moreover, $C$ and $C'$ are linearly equivalent as Weil divisors in $\Sigma$. From the construction of $f$, we obtain that $f(C)=f(C')$. 
\end{proof}

Now, we are ready to prove the first main result of our paper.

\begin{theorem}\label{first_moduli_space}
Let $X$ be a very general GM fourfold with a Lagrangian data~$(A,V_5,V_6)$. For any generic stability condition $\sigma$ on $\Ku(X)$, the projection functor $\pr_1$ will induce an isomorphism
$$i:\widetilde{Y}_{A^{\bot}}\cong\mathcal{M}_\sigma(\Ku(X),\Lambda_1).$$
\end{theorem}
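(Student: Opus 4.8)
The plan is to assemble the isomorphism from the pieces already in place, using the essential $\mathbb{P}^1$-fibration structure on both sides. First I would observe that by Theorem~\ref{pr_stable_thm} the projection $\pr_1$ sends every conic $C\in F_g(X)$ to a $\sigma$-stable object of class $\Lambda_1$, so that (via the universal family on $F_g(X)$ and the Fourier--Mukai nature of $\pr_1$) we get a morphism $p\colon F_g(X)\to \mathcal M_\sigma(\Ku(X),\Lambda_1)$. By Proposition~\ref{IM11contraction}, Iliev--Manivel produce $f\colon F_g(X)\to \widetilde Y_{A^\perp}$ with the same ``essential $\mathbb{P}^1$-bundle'' shape. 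The key comparison is Proposition~\ref{contracts_tau_conic_by_P1} together with Remark~\ref{involution_switch_rho_and_sigma_conic}: the fibers of $p$ over the $\tau$-locus are exactly the fibers of $f$ (two $\tau$-conics have the same image under $p$ iff $\Hom(\pr_1(\oh_C(H)),\pr_1(\oh_{C'}(H)))\neq 0$ iff $f(C)=f(C')$), while $\rho$-conics and $\sigma$-conics are each contracted to a single point on both sides. Hence $p$ and $f$ have literally the same fibers.

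From this I would conclude set-theoretically that there is a bijection $\widetilde Y_{A^\perp}\to \mathcal M_\sigma(\Ku(X),\Lambda_1)$ through which $p$ factors as $p = \bar p\circ f$. To promote this to an isomorphism of varieties I would argue as follows. First, $p$ is proper (since $F_g(X)$ is projective) and its image is closed; I would show it is also dominant --- either by a dimension count ($\dim F_g(X)=5$, the generic fiber is a $\mathbb{P}^1$, and $\dim \mathcal M_\sigma(\Ku(X),\Lambda_1)=(\Lambda_1,\Lambda_1)+2=4$), or by invoking that $\mathcal M_\sigma(\Ku(X),\Lambda_1)$ is irreducible and that the image has dimension $4$. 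So $p$ is surjective. Since $f\colon F_g(X)\to \widetilde Y_{A^\perp}$ is its own Stein factorization (Iliev--Manivel) and $p$ has the same fibers, the Stein factorization of $p$ is $F_g(X)\xrightarrow{f}\widetilde Y_{A^\perp}\to \mathcal M_\sigma(\Ku(X),\Lambda_1)$, giving a \emph{finite} morphism $i\colon \widetilde Y_{A^\perp}\to \mathcal M_\sigma(\Ku(X),\Lambda_1)$. As it is also bijective and the target is smooth (hence normal) by \cite{perry2019stability}, Zariski's main theorem forces $i$ to be an isomorphism.

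The remaining point, and the one I expect to be the main technical obstacle, is establishing that $p$ genuinely factors through $f$ as morphisms of schemes (not just on points), i.e., that $f_*\oh_{F_g(X)}$-algebra structure matches, equivalently that $p$ contracts no curve that $f$ does not contract and vice versa in a scheme-theoretic sense. Here I would use that over the $\tau$-locus both $f$ and $p$ are $\mathbb{P}^1$-bundles: Proposition~\ref{contracts_tau_conic_by_P1} identifies the fibers of $p|_{F^\tau_g(X)}$ with the pencils of planes in the quadrics $Q'$, which are precisely the fibers of $f|_{F^\tau_g(X)}$, and a morphism between two $\mathbb{P}^1$-bundles over (the same) base that is a bijection on fibers descends. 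Then one checks compatibility along the two exceptional points coming from $F^\rho_g(X)$ and $F^\sigma_g(X)$; Remark~\ref{involution_switch_rho_and_sigma_conic} shows $\pr_2(I_C)$ takes the constant value $\pr_1(\cU)[1]$ (resp.\ $T^{-1}$ of it) on each of those loci, so $p$ is literally constant there, matching $f$. Finally I would record that the same circle of ideas, applied with $\pr_2$ in place of $\pr_1$ (and using $T$, which fixes $\Lambda_1$ and negates $\Lambda_2$, together with $\mathbb{D}$ to relate $\Lambda_2$ to $\Lambda_2$), yields $\mathcal M_\sigma(\Ku(X),\Lambda_2)\cong \widetilde Y_A$, and that the involution induced by $T$ on $\mathcal M_\sigma(\Ku(X),\Lambda_1)$ swaps the two exceptional points and hence coincides with the natural double-cover involution of $\widetilde Y_{A^\perp}$ --- which is Theorem~\ref{main_theorem_1}.
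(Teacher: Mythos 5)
Your proposal follows essentially the same route as the paper: the morphism $p$ induced by $\pr_1$ via Theorem~\ref{pr_stable_thm} and the Fourier--Mukai structure, the fiberwise identification with $f$ (a $\mathbb{P}^1$-bundle over the $\tau$-locus by Proposition~\ref{contracts_tau_conic_by_P1}, with the $\rho$- and $\sigma$-loci contracted to two distinct points by Remark~\ref{involution_switch_rho_and_sigma_conic}), a dimension count for dominance, and Stein factorization plus normality of the smooth target to produce the isomorphism $i$ with $i\circ f=p$ (the paper cites \cite[Exercise 29.5.C]{vakilFOAG} for exactly this). Your extra care about the scheme-theoretic (rather than merely set-theoretic) factorization of $p$ through $f$ is a point the paper's proof passes over quickly, but it is the same argument in substance.
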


\begin{proof}
Now we have the morphism  $p:F_g(X)\rightarrow\mathcal{M}_{\sigma}(\Ku(X),\Lambda_1)$ induced by $\pr_1$.
In Proposition~\ref{contracts_tau_conic_by_P1}, on the locus of $\tau$-conics, $p$ is a $\mathbb{P}^1$-bundle and coincides with $f$. Furthermore, it follows from Proposition~\ref{pr2-rho} and Proposition~\ref{pr2-sigma} that the morphism $p$ coincides with $f$ on the whole $F_g(X)$. By \cite[Proposition 1.5]{perry2019stability}, we know that $\mathcal{M}_\sigma(\Ku(X),\Lambda_1)$ is a smooth projective  variety of dimension four. On the other hand, since the dimension of $F_g(X)$ is five and the general fiber of $p$ is one dimensional, $p$ is a proper dominant morphism onto $\mathcal{M}_\sigma(\Ku(X),\Lambda_1)$. Then according to  \cite[Exercise 29.5.C]{vakilFOAG}, we have an isomorphism $i:\widetilde{Y}_{A^{\bot}}\cong\mathcal{M}_\sigma(\Ku(X),\Lambda_1)$ such that $i\circ f=p$.
\end{proof}

\subsection{Moduli space of stable objects $\mathcal{M}_{\sigma}(\Ku(X),\Lambda_2)$}
In Theorem~\ref{first_moduli_space}, we have shown that for a very general GM fourfold $X$, the moduli space $M_\sigma(\Ku(X),\Lambda_1)$ is isomorphic to the double dual EPW sextic  $\widetilde{Y}_{A^\perp}$. Now we prove the moduli space $\mathcal{M}_\sigma(\Ku(X),\Lambda_2)$ is isomorphic to another hyperkähler fourfold, the double EPW sextic $\widetilde{Y}_{A}$. 

\begin{theorem}
\label{second_moduli_space}
Let $X$ be a very general GM fourfold with a Lagrangian data~$(A,V_5,V_6)$. For any generic stability condition $\sigma$ on $\Ku(X)$,  we have an isomorphism  $$\mathcal{M}_{\sigma}(\Ku(X),\Lambda_2)\cong\widetilde{Y}_{A}.$$ 
\end{theorem}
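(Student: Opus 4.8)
The plan is to exploit the involutive auto-equivalence $T = \bL_{\oh_X}\circ \mathbb{D}$ on $\Ku(X)$ constructed in Proposition~\ref{involution_functor_original}, together with the identification of $\mathcal{M}_\sigma(\Ku(X),\Lambda_1)$ with $\widetilde{Y}_{A^\perp}$ from Theorem~\ref{first_moduli_space}, and the O'Grady duality between $\widetilde{Y}_{A^\perp}$ and $\widetilde{Y}_A$ recalled in Theorem~\ref{ogrady involution}. First I would observe that $\Lambda_2$ is a $(-1)$-class with $(\Lambda_2,\Lambda_2)=-2$, so by \cite[Proposition 1.5]{perry2019stability} the moduli space $\mathcal{M}_\sigma(\Ku(X),\Lambda_2)$, once nonempty, is a smooth projective hyperkähler fourfold. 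Nonemptiness should follow by exhibiting an explicit stable object of character $\Lambda_2$ — a natural candidate is $\pr_2(\oh_C)$ or a projection of a natural sheaf on $X$ (e.g.\ a twist of $\cU$ or the sheaf realizing $M_X(4-2H+\tfrac16 H^3)$ mentioned in the Further Questions section), whose stability can be checked via Proposition~\ref{stability_object_ext1_small} by an $\mathrm{ext}^1 \le 9$ computation.

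The core of the argument: since $T$ fixes $\Lambda_1$ and sends $\Lambda_2$ to $-\Lambda_2$ (the Remark after Proposition~\ref{involution_functor_original}), $T$ induces an isomorphism of moduli spaces $\mathcal{M}_\sigma(\Ku(X),\Lambda_2)\cong \mathcal{M}_{\sigma'}(\Ku(X),-\Lambda_2)$ for the transformed stability condition $\sigma' = T\cdot\sigma$, and since $\mathcal{M}(-v)\cong\mathcal{M}(v)$ by taking shifts, these are all deformation-equivalent / isomorphic hyperkähler fourfolds. The key point is to pin down \emph{which} double EPW sextic we get. I would argue Hodge-theoretically: by \cite[Theorem 5.1]{debarre2019gushel} (quoted as the theorem before Theorem~\ref{ogrady involution}) there is an isomorphism of polarized Hodge structures $H^4(X,\mathbf{Z})_{00}\simeq H^2(\widetilde{Y}_A,\mathbf{Z})_0(-1)$, and likewise for $\widetilde{Y}_{A^\perp}$. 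On the other hand, the Kuznetsov component $\Ku(X)$ carries a weight-two Hodge structure on its topological K-theory whose transcendental part is identified with $H^4(X,\mathbf{Z})_{00}$ (up to Tate twist), and a Bridgeland moduli space $\mathcal{M}_\sigma(\Ku(X),v)$ for a $(-1)$-class $v$ has its $H^2$ identified with $v^\perp$ inside this Hodge structure, exactly as in the K3 case. So I would compute $\Lambda_2^\perp$ versus $\Lambda_1^\perp$ inside $\widetilde{H}(\Ku(X))$ and match them with the period points of $\widetilde{Y}_A$ and $\widetilde{Y}_{A^\perp}$; since $\Lambda_1,\Lambda_2$ span the algebraic lattice and are orthogonal, $\Lambda_2^\perp$ and $\Lambda_1^\perp$ agree on the transcendental part but differ on the algebraic part, which is precisely the discrepancy producing the O'Grady involution $\bar r$ of Theorem~\ref{ogrady involution}. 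Hence $\mathcal{M}_\sigma(\Ku(X),\Lambda_2)$ has the period of $\widetilde{Y}_A$, and since both are hyperkähler of $K3^{[2]}$-type, the Torelli theorem for such varieties gives the isomorphism $\mathcal{M}_\sigma(\Ku(X),\Lambda_2)\cong\widetilde{Y}_A$.

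An alternative, more geometric route — which may be what the authors actually use — is to run the same conic-projection machinery as for $\Lambda_1$: by Lemma~\ref{T-action}, $T(\pr_2(I_C))\cong\pr_1(\oh_C(H))[-2]$, so applying $T$ to the morphism $p:F_g(X)\to\mathcal{M}_\sigma(\Ku(X),\Lambda_1)\cong\widetilde{Y}_{A^\perp}$ produces a morphism from $F_g(X)$ to $\mathcal{M}_{\sigma'}(\Ku(X),\Lambda_1)$, but since what we want is $\Lambda_2$ we instead compose with a different projection. Concretely, one can use the auto-equivalence $T$ (or rather the induced action on moduli of $(-1)$-classes) to transport Theorem~\ref{first_moduli_space}: $T$ induces an isomorphism $\mathcal{M}_\sigma(\Ku(X),\Lambda_1)\cong\mathcal{M}_{T\cdot\sigma}(\Ku(X),\Lambda_1)$, which is the identity on $\widetilde Y_{A^\perp}$ up to the natural anti-symplectic involution (Theorem~\ref{main_theorem_1}(3)), and a \emph{separate} spherical-twist or tensor operation sending $\Lambda_1$ to $\Lambda_2$ together with O'Grady's computation then yields $\widetilde Y_A$. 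In either approach, the main obstacle is the bookkeeping of periods: one must show that the isomorphism class of the hyperkähler fourfold $\mathcal{M}_\sigma(\Ku(X),\Lambda_2)$ — not merely its deformation class — is exactly $\widetilde{Y}_A$ and not $\widetilde{Y}_{A^\perp}$, and this requires carefully tracking how the lattice embedding $\langle\Lambda_1,\Lambda_2\rangle \hookrightarrow \widetilde H(\Ku(X))$ interacts with the O'Grady involution $\bar r$ of Theorem~\ref{ogrady involution}, ruling out the "wrong" identification. The genericity of $\sigma$ is used to ensure the moduli space is fine and smooth, and the very general hypothesis on $X$ enters through Theorem~\ref{pr_stable_thm} (needed if one takes the geometric route via conic projections) and through the fact that $\mathcal{N}(\Ku(X))$ has rank exactly two, so that $\Lambda_1^\perp$ and $\Lambda_2^\perp$ differ only in the predicted way.
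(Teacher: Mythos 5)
There is a genuine gap at the crux of your argument, and it is precisely the point you yourself flag as ``the main obstacle'' without resolving. Your plan is to compute the period of $\mathcal{M}_\sigma(\Ku(X),\Lambda_2)$ as $\Lambda_2^\perp$ inside the Mukai lattice and then invoke Torelli to conclude it is $\widetilde{Y}_A$ rather than $\widetilde{Y}_{A^\perp}$. But since $\Lambda_1$ and $\Lambda_2$ are orthogonal classes of the same square spanning the algebraic part of $\mathcal{N}(\Ku(X))$, the lattices $\Lambda_1^\perp$ and $\Lambda_2^\perp$ are \emph{abstractly Hodge-isometric} (swap $\Lambda_1\leftrightarrow\Lambda_2$, identity on the transcendental part). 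The whole content of O'Grady's duality (Theorem~\ref{ogrady involution}) is that $\widetilde{Y}_A$ and $\widetilde{Y}_{A^\perp}$ are distinguished not by an abstract Hodge isometry of $H^2$ but by whether the isometry lies in the monodromy group; so ``computing the period and applying Torelli'' cannot by itself rule out the wrong identification. Separately, the involution $T$ is of no help here: it fixes $\Lambda_1$ and sends $\Lambda_2$ to $-\Lambda_2$, so it only yields the tautology $\mathcal{M}_\sigma(\Lambda_2)\cong\mathcal{M}_{T\cdot\sigma}(-\Lambda_2)$ and never relates the $\Lambda_2$-moduli space to the $\Lambda_1$-moduli space. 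Finally, non-emptiness of $\mathcal{M}_\sigma(\Ku(X),\Lambda_2)$ is asserted via a candidate object whose stability is not checked; note the paper explicitly lists the Gieseker-theoretic description of this moduli space as an open question, so an explicit stable object of class $\Lambda_2$ is not readily available.

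The paper's actual proof sidesteps all of this with a different mechanism: take $X'$ to be a (very general) \emph{period dual} of $X$, so that $A'=A^\perp$, and use the Kuznetsov--Perry duality to get a Fourier--Mukai equivalence $\Phi:\Ku(X)\simeq\Ku(X')$. By \cite[Theorem 5.12]{bayer2022kuznetsov} the induced isometry preserves the canonical rank-two lattices, so up to sign $[\Phi](\Lambda_1)\in\{\Lambda_1',\Lambda_2'\}$. The case $[\Phi](\Lambda_1)=\Lambda_1'$ is excluded because, combined with Theorem~\ref{first_moduli_space} on both sides, it would give $\widetilde{Y}_{A^\perp}\cong\widetilde{Y}_{A'^\perp}=\widetilde{Y}_A$, contradicting O'Grady's theorem for general $A$. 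Hence $[\Phi](\Lambda_2)=\Lambda_1'$ and $\mathcal{M}_\sigma(\Ku(X),\Lambda_2)\cong\mathcal{M}_{\sigma'}(\Ku(X'),\Lambda_1')\cong\widetilde{Y}_{A'^\perp}=\widetilde{Y}_A$; non-emptiness comes for free from the equivalence. This is the geometric input that does the work your lattice bookkeeping cannot: it is exactly the non-isomorphism $\widetilde{Y}_A\not\cong\widetilde{Y}_{A^\perp}$, applied on the period dual, that pins down which EPW sextic one obtains. If you want to salvage your route, you would need to replace ``apply Torelli'' with an argument of comparable geometric content.
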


\begin{proof}
Let $X'$ be the period dual of $X$, we can always choose $X'$ to be very general as well. Then by \cite[Theorem 1.6]{kuznetsov2019categorical}, there is an equivalence of Fourier--Mukai type $\Phi:\Ku(X)\simeq\Ku(X')$. We claim that $\Phi$ induces an isomorphism between Bridgeland moduli spaces $\phi:\mathcal{M}_{\sigma}(\Ku(X),\Lambda_1)\rightarrow\mathcal{M}_{\sigma'}(\Ku(X'),\Lambda_2')$. Indeed as a corollary of \cite[Theorem 5.12]{bayer2022kuznetsov}, the induced isomorphism $$[\Phi]: \mathcal{N}(\Ku(X))\cong \mathcal{N}(\Ku(X'))$$ will identify the canonical rank 2 lattices $\langle\Lambda_1,\Lambda_2\rangle $ and $\langle\Lambda_1',\Lambda_2'\rangle$ on each side. Then up to sign, $[\Phi](\Lambda_1)=\Lambda_1'$ or $\Lambda_2'$. If $[\Phi](\Lambda_1)=\Lambda_1'$, then $\Phi$ induces a bijective map $\phi$ between the moduli spaces $\mathcal{M}_{\sigma}(\Ku(X),\Lambda_1)$ and $\mathcal{M}_{\sigma'}(\Ku(X),\Lambda_1')$. Since $\Phi$ is of Fourier--Mukai type, we get an isomorphism of moduli functors $\mathsf{M}_1\cong \mathsf{M}'_1\colon$ $(\mathrm{Sch}/\mathbb{C})^{op}\to \mathrm{Gpds}$, where 
\[\mathsf{M}_i\colon T\mapsto \{\cF\in \D^b(X\times T)~ \text{a family of geometrically}~\sigma\text{-stable objects over}~ T ~\text{of class} ~\Lambda_i\}\]
\[\mathsf{M}'_i\colon T\mapsto \{\cF'\in \D^b(X'\times T)~ \text{a family of geometrically}~\sigma'\text{-stable objects over}~ T ~\text{of class} ~\Lambda'_i\},\]
see \cite[Definition 21.11]{BLMNPS21}.
More precisely, the isomorphism of functors is given by
\[\mathsf{M}_1(T)\to \mathsf{M}_1'(T),\quad\cF\mapsto \overline{\Phi}\boxtimes \mathrm{id}_{\D^b(T)}(\cF)\]
for any $T\in (\mathrm{Sch}/\mathbb{C})^{op}$, where $\overline{\Phi}$ is the composition $\D^b(X)\xra{\pr_1}\Ku(X)\xra{\Phi}\Ku(X')\hookrightarrow \D^b(X')$. Note that the functor $\overline{\Phi}\boxtimes \mathrm{id}_{\D^b(T)}$ makes sense since $\overline{\Phi}$ and $\mathrm{id}_{\D^b(T)}$ are Fourier--Mukai functors so that we can pull their Fourier--Mukai kernels back to $X\times T\times X'\times T$ and take the product.

Since both moduli functors are algebraic stacks finite type over $\mathbb{C}$ (cf.~\cite[Theorem 21.24 (2)]{BLMNPS21}), by the uniqueness of good moduli spaces \cite[Theorem 6.6]{alp} we have 
$$\phi:\widetilde{Y}_{A^{\perp}}\cong\mathcal{M}_{\sigma}(\Ku(X), \Lambda_1)\cong\mathcal{M}_{\sigma'}(\Ku(X'),\Lambda_1')\cong\widetilde{Y}_{A'^{\perp}}.$$

But $X'$ is the period dual of $X$, $A'=A^{\perp}$, then we get an isomorphism $\widetilde{Y}_{A^{\perp}}\cong\widetilde{Y}_{A}$, which is impossible by \cite[Theorem 1.1]{o2006dual}. Thus up to sign, we have $[\Phi](\Lambda_1)=\Lambda_2'$ and $[\Phi](\Lambda_2)=\Lambda_1'$, which implies  $\mathsf{M}_1\cong \mathsf{M}'_2$ by the same argument above. Then the moduli space $\mathcal{M}_{\sigma}(\Ku(X),\Lambda_2)\cong\mathcal{M}_{\sigma}(\Ku(X'),\Lambda_1')\cong\widetilde{Y}_{A'^{\perp}}$, again by $A'=A^{\perp}$, we have  $\mathcal{M}_{\sigma}(\Ku(X),\Lambda_2)\cong\widetilde{Y}_{A}$.
\end{proof}

\begin{remark}
In fact, \cite[Theorem 5.12]{bayer2022kuznetsov} holds for any GM fourfold. As a result, once we extend Theorem~\ref{main_theorem_1}(1) to a general GM fourfold, Theorem~\ref{main_theorem_1}(2) automatically holds. The point is that even though the rank of the numerical Grothendieck group is bigger than two, the induced equivalence of period duals still fixes the canonical $A_1^{\oplus2}$ lattice. Then our method in Theorem~\ref{second_moduli_space} still works.
\end{remark}

\subsection{An involution on $\Ku(X)$ and its induced action on the double dual EPW sextic}
Now we are going to discuss two involutions acting on $\widetilde{Y}_{A^{\perp}}$ and $\mathcal{M}_{\sigma}(\Ku(X),\Lambda_1)$ respectively. One is naturally induced by the structure of double cover and the other is induced by the involutive functor $T$ of $\Ku(X)$ defined in Lemma \ref{involution_functor_original}.

By the result of Theorem~\ref{first_moduli_space}, we have the following diagram
\[
\begin{tikzcd}
            & \widetilde{Y}_{A^\perp} \arrow[d, "i"] \arrow[r, "\eta"] & \widetilde{Y}_{A^\perp} \\
F_g(X) \arrow[r, "p"] \arrow[ru, "f"] & \mathcal{M}_{\sigma}(\Ku(X),\Lambda_1) \arrow[r, "\eta'"]                & \mathcal{M}_{\sigma}(\Ku(X),\Lambda_1),
\end{tikzcd}
\]
where $i$ is the isomorphism such that $i\circ f=p$ in Theorem~\ref{first_moduli_space} and the two involutions are denoted by $\eta$ and $\eta'$ respectively. To prove $\eta$ and $\eta'$ coincide, it suffices to show that for a general conic $C\in F_g(X)$, there exists another conic $C'$ such that $\eta\circ f(C)=f(C')$ and $\eta'\circ p(C)=p(C')$. 

Firstly, we briefly review the involution $\eta$ described in \cite[Lemma 4.19]{iliev2011fano}. For a general conic $C$, if there exists another conic $C'$ such that $\eta\circ f(C)=f(C')$,  then the spanning planes $\langle C \rangle$ and $\langle C' \rangle$ lie in a same quadric threefold $Q_{C,V_4}=Q_{C',V_4}$. Here $Q_{C,V_4}$ is the unique singular quadric of the pencil $\langle\mathrm{Gr}(2,V_4)\cap H, H\cap Q\rangle$ contained in $\mathbb{P}(\wedge^2V_4)\cap H$. However, $\langle C \rangle$ and $\langle C' \rangle$ do not belong to the same ruling, as a result, $\langle C \rangle$ meets $\langle C' \rangle$ along a line. Furthermore, the generating 3-plane  $\langle C,C' \rangle$ cuts $X$ at a degenerate elliptic curve 
\[e:=C\cup C'=\langle C,C' \rangle\cap\mathrm{Gr}(2,V_4)\cap Q.\]

Now we only need to show such $C'$ satisfies $\eta'\circ p(C)=p(C')$ as well, which is equivalent to prove $T(\mathrm{pr}_1(\oh_{C}(H)))\cong\mathrm{pr}_1(\oh_{C'}(H))$.

\begin{lemma} \label{T-act-tau}
Let $X$ be a general ordinary GM fourfold.
Let $C$ be a general $\tau$-conic such that it is contained in a smooth surface $\Sigma$, defined by the zero locus of a section of $\cU^{\vee}$. Then we have a triangle
\[0\to I_{\Sigma}\to \pr_2(I_C)\to I_C\to 0.\]
Moreover, $T(\mathrm{pr}_1(\oh_{C}(H)))\cong\mathrm{pr}_1(\oh_{C'}(H))$ for another $\tau$-conic $C'\subset \Sigma$ such that as divisors of $\Sigma$, $C'=-C-K_{\Sigma}$.
\end{lemma}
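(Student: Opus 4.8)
The plan is to treat the exact sequence and the ``Moreover'' part separately, in both cases building on Proposition~\ref{proj-tau} and the identification in Lemma~\ref{T-action}.

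For the triangle, I would refine Proposition~\ref{proj-tau}: there $\pr_2(I_C)$ is shown to be the sheaf fitting into $0\to\cK\to\pr_2(I_C)\to I_C\to 0$ with $\cK=\mathrm{cok}(\cU^{\oplus 4}\xrightarrow{\alpha}K_1)$ and $K_1=\mathrm{cok}(\oh_X(-H)\hookrightarrow\cU^{\oplus 5})$, so it suffices to identify $\cK\cong I_\Sigma$. Since the five canonical sections of $\cU^{\vee}$ spanning $\Hom(\oh_X(-H),\cU)\cong V_5^{\vee}$ have no common zero, $K_1$ is locally free of rank $9$, and a Chern character computation gives $\rk\cK=1$, $c_1(\cK)=0$, $\ch_2(\cK)=-[\Sigma]$. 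As $\Ext^1(\cU,\oh_X(-H))=0$ (Borel--Weil--Bott from the Koszul resolution of $X$), the map $\alpha$ lifts to $\cU^{\oplus 4}\to\cU^{\oplus 5}$, necessarily a full-rank constant matrix over $\operatorname{End}(\cU)=k$, whence $\cK\cong\mathrm{cok}(\oh_X(-H)\xrightarrow{\psi}\cU)\cong I_{Z(\psi)}$ with $Z(\psi)=\Gr(2,\ker\psi)\cap X$ the zero locus of a section $\psi$ of $\cU^{\vee}$. To see $Z(\psi)=\Sigma$: the extension $0\to I_{Z(\psi)}\to\pr_2(I_C)\to I_C\to 0$ is non-split, for otherwise $\ext^1(\pr_2(I_C),\pr_2(I_C))\ge\ext^1(I_C,I_C)=5>4$, contradicting $\ext^1(\pr_2(I_C),\pr_2(I_C))=4$ from Proposition~\ref{stability of tau_conic}; hence $\Ext^1(I_C,I_{Z(\psi)})\neq 0$, and comparing with the Koszul resolution of $I_{Z(\psi)}$ (as in Lemma~\ref{ICIS}) forces $C\subset Z(\psi)$, so by $\Hom(\cU,I_C)=k$ (Lemma~\ref{classify-conic}) the surface $Z(\psi)$ must be $\Sigma$.

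For the ``Moreover'', I would combine Lemma~\ref{T-action} with Grothendieck duality on $\Sigma$. Since $T=\bL_{\oh_X}\circ\mathbb{D}$ is a contravariant involution of $\Ku(X)$, applying $T$ to $T(\pr_2(I_{C'}))\cong\pr_1(\oh_{C'}(H))[-2]$ reduces the claim to $T(\pr_1(\oh_C(H)))\cong\pr_1(\oh_{C'}(H))$. By Proposition~\ref{proj-tau}, $\pr_1(\oh_C(H))\cong\bL_{\oh_X}(I_{C/\Sigma}(H))[1]$ with $I_{C/\Sigma}\cong\oh_\Sigma(-C)$ a line bundle on the smooth surface $\Sigma$. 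Because $\RHom(\oh_X,I_{C/\Sigma}(H))$ is concentrated in degree $0$ (Lemma~\ref{ICSexts1}), the defining triangle of $\bL_{\oh_X}$ and its $\mathbb{D}$-dual give $\bL_{\oh_X}\mathbb{D}\bL_{\oh_X}(I_{C/\Sigma}(H))\cong\bL_{\oh_X}\mathbb{D}(I_{C/\Sigma}(H))$, so $T(\pr_1(\oh_C(H)))$ is, up to a shift, $\bL_{\oh_X}$ applied to $\mathbb{D}_X(I_{C/\Sigma}(H))$. Grothendieck--Verdier duality for $\iota\colon\Sigma\hookrightarrow X$ gives $\mathbb{D}_X(\iota_*\cM)\cong\iota_*(\cM^{\vee}\otimes\omega_\Sigma\otimes\omega_X^{\vee}|_\Sigma)[-2]$, and since $\Sigma$ is the zero locus of a section of $\cU^{\vee}$, adjunction yields $\omega_\Sigma\cong\oh_\Sigma(-H)$, hence $\omega_\Sigma\otimes\omega_X^{\vee}|_\Sigma\cong\oh_\Sigma(H)$; therefore $\mathbb{D}_X(I_{C/\Sigma}(H))\cong\iota_*\oh_\Sigma(C)[-2]\cong I_{C'/\Sigma}(H)[-2]$ with $C':=H-C=-C-K_\Sigma$. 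As $(C')^2=0$ and $C'\cdot H=2$, the pencil $|C'|$ is base-point-free, so $C'$ is represented by a smooth $\tau$-conic contained in $\Sigma$, and $\pr_1(\oh_{C'}(H))\cong\bL_{\oh_X}(I_{C'/\Sigma}(H))[1]$ again by Proposition~\ref{proj-tau}; tracking the cohomological degrees through the contravariant triangulated functor $T$ yields $T(\pr_1(\oh_C(H)))\cong\pr_1(\oh_{C'}(H))$, the right-hand side depending only on the class of $C'$ in $\Pic(\Sigma)$. Feeding $\mathbb{D}_X(I_{C/\Sigma}(H))\cong\iota_*\oh_\Sigma(C)[-2]$ back through the $\bL_{\oh_X}$-triangle reproduces the exact sequence of part (a), giving an independent check.

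The step I expect to be the main obstacle is the identification $Z(\psi)=\Sigma$ in part~(a): this relies on the non-splitness argument together with the vanishing of $\Ext^1(I_C,I_{\Gr(2,V_4')\cap X})$ for every hyperplane $V_4'$ with $C\not\subset\Gr(2,V_4')$, which needs a small but careful Borel--Weil--Bott computation. A secondary difficulty is, in part~(b), the bookkeeping of shifts through $T=\bL_{\oh_X}\circ\mathbb{D}$ and the Grothendieck duality isomorphism, together with the geometric input that under the hypotheses $\Sigma=\Gr(2,V_4)\cap X$ is a smooth del Pezzo surface of degree $4$ with $-K_\Sigma=H|_\Sigma$ on which $-C-K_\Sigma$ is represented by a smooth $\tau$-conic.
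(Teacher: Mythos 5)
Your proposal is essentially correct in outline but takes a genuinely different route from the paper at both stages, and it has one step that remains a real gap.

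For the exact sequence, the paper simply asserts that $\mathrm{cok}(\cU^{\oplus 4}\hookrightarrow K_1)\cong I_{\Sigma}$ and cites Proposition~\ref{proj-tau}; you instead try to prove this identification, via the Chern character of the cokernel, the lifting of $\alpha$ through $\cU^{\oplus 5}$ (which is legitimate, since $\Ext^1(\cU,\oh_X(-H))=H^1(X,\cU)=0$ and injectivity of $\alpha$ forces the constant matrix to have full rank), and then a non-splitness argument to pin down $Z(\psi)=\Sigma$. The last step is the genuine gap you already flagged: from $0\to\oh_X(-H)\to\cU\to I_{Z(\psi)}\to 0$ one gets $\Ext^1(I_C,I_{Z(\psi)})=\ker\bigl(\Ext^2(I_C,\oh_X(-H))\to\Ext^2(I_C,\cU)\bigr)$, where $\Ext^2(I_C,\oh_X(-H))\cong H^1(C,\oh_C(-H))^{\vee}=k$ is one-dimensional, so non-vanishing of $\Ext^1(I_C,I_{Z(\psi)})$ hinges on whether this one-dimensional space dies under the map induced by $\psi$ --- precisely the Borel--Weil--Bott-type computation you did not carry out. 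A more direct alternative is to trace the section $\psi$ through the construction of $\alpha$ in Proposition~\ref{proj-tau}, where the relevant quotient of $V_5^{\vee}$ is visibly the one determined by the $V_4$ with $C\subset\Gr(2,V_4)$.

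For the ``Moreover'' part your route is genuinely different and, I think, cleaner: the paper identifies $\pr_2(I_C)$ with $\ker(\oh_X^{\oplus 2}\to\oh_{\Sigma}(C))$ by embedding it into $\oh_X^{\oplus 2}$ and identifying the cokernel extension of $\oh_C$ by $\oh_{\Sigma}$ with $\oh_{\Sigma}(C)$ (a point it does not fully justify, since a priori the extension could split), whereas you compute $\mathbb{D}_X(I_{C/\Sigma}(H))\cong I_{C'/\Sigma}(H)[-2]$ directly by Grothendieck--Verdier duality on $\Sigma$ together with $\bL_{\oh_X}\mathbb{D}\bL_{\oh_X}\cong\bL_{\oh_X}\mathbb{D}$; both land on the same object, and your version makes the appearance of $C'=-C-K_{\Sigma}$ transparent. (The residual shift ambiguity in tracking the contravariant $T$ is present in the paper's own statement and proof, so I would not count it against you.) The one place where your argument is too quick is the claim that base-point-freeness of $|C'|$ forces a member to be a $\tau$-conic: smoothness does not by itself exclude the $\sigma$- and $\rho$-types, whose spanning planes can lie inside $\Gr(2,V_4)$. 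The paper rules these out cleanly by observing that $\pr_1(\oh_{C'}(H))$ would then be $\pr_1(\cU)[3]$ or $T(\pr_1(\cU))[3]$, neither of which is a rank-two sheaf up to shift; you should substitute that argument (or a dimension count using the generality of $C$) for your geometric assertion.
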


\begin{proof}
The first statement follows from Proposition \ref{proj-tau}, note that $\mathrm{cok}(\cU^{\oplus 4}\hookrightarrow K_1)=\mathrm{cok}(\oh_X(-H)\hookrightarrow \cU)\cong I_{\Sigma}$. Since $I_{\Sigma}$ and $I_C$ are both subsheaves of $\oh_X$, $\mathrm{pr}_2(I_C)$ is also a subsheaf of $\oh_X^{\oplus 2}$. The cokernel of $\mathrm{pr}_2(I_C)\xhookrightarrow{i}\oh_X^{\oplus 2}$ is a sheaf obtained by an extension of $\oh_{\Sigma}$ by $\oh_C$. Consider the standard exact sequence of $C\subset\Sigma$,
$$0\rightarrow \oh_{\Sigma}(-C)\rightarrow\oh_{\Sigma}\rightarrow\oh_C\rightarrow 0.$$

By the standard exact sequence $0\to \cU^{\vee}|_C=N_{\Sigma/X}|_C\to N_{C/X}\to N_{C/\Sigma}\to 0$ and \cite[Lemma 3.3]{iliev2011fano}, we see $\deg N_{C/\Sigma}=0$, i.e.~$C.C=0$. Then  tensoring with $\oh_{\Sigma}(C)$, we get 
$$0\rightarrow\oh_{\Sigma}\rightarrow\oh_{\Sigma}(C)\rightarrow\oh_C\rightarrow 0.$$ 
Then $\mathrm{pr}_2(I_C)\cong\ker(\oh_X^{\oplus 2}\rightarrow\oh_\Sigma(C))$.
Note that $N_{\Sigma/X}=\cU^{\vee}|_C$, then from the conormal sequence and $K_X=\oh_X(-2H)$, we see  $K_{\Sigma}\cong\oh_{\Sigma}(-H)$. Hence $-C-K_{\Sigma}$ is a divisor of conic as well. 

We take $C'\in |-C-K_{\Sigma}|$, then $C\cup C'\in |C'+C|=|-K_{\Sigma}|$, which is a degenerate degree $4$ elliptic curve on $\Sigma$.
Thus we have 
$$\mathrm{pr}_2(I_C)\cong\ker(\oh_X^{\oplus 2}\rightarrow\oh_\Sigma(-C'+H))\cong\ker(\oh_X^{\oplus 2}\rightarrow I_{C'/\Sigma}(H))\cong\pr_1(\oh_{C'}(H))[-2].$$
By Lemma \ref{T-action}, $\mathrm{pr}_2(I_C)\cong T(\mathrm{pr}_1(\oh_{C}(H)))[-2]$, we obtain $T(\mathrm{pr}_1(\oh_{C}(H)))\cong\mathrm{pr}_1(\oh_{C'}(H))$.

Now it remains to show that $C'$ is also a $\tau$-conic. By Proposition~\ref{pr2-rho} and Proposition~\ref{pr2-sigma}, if $C'$ is of $\sigma$-type or $\rho$-type, $\mathrm{pr}_1(\oh_{C'}(H))\cong\mathrm{pr}_1(\cU)[3]$ or $T(\mathrm{pr}_1(\cU))[3]$, neither of them is a rank two sheaf up to a  shift. Then the result follows. 
\end{proof}

\begin{proposition}\label{involution_induced_auto}
Let $X$ be a very general ordinary GM fourfold, then via the isomorphism~$i$, the two involutions $\eta$ and $\eta'$ coincide.
\end{proposition}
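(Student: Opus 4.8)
The statement to prove is Proposition~\ref{involution_induced_auto}: via the isomorphism $i:\widetilde{Y}_{A^\perp}\cong\mathcal{M}_\sigma(\Ku(X),\Lambda_1)$, the natural double-cover involution $\eta$ on $\widetilde{Y}_{A^\perp}$ agrees with the involution $\eta'$ on the moduli space induced by the auto-equivalence $T$. As indicated in the paragraphs preceding the statement, it suffices to prove this on a dense open subset, namely on the image of the locus of general $\tau$-conics, since both $\eta$ and $\eta'$ are morphisms (indeed isomorphisms) of separated varieties and two morphisms agreeing on a dense subset of a reduced separated scheme agree everywhere. So the plan is: first reduce to checking the identity $\eta\circ f(C)=f(C')\iff \eta'\circ p(C)=p(C')$ for a general $\tau$-conic $C$ and the conic $C'$ constructed in Lemma~\ref{T-act-tau}; then match the two constructions of $C'$ — the geometric one coming from Iliev--Manivel's description of $\eta$ (\cite[Lemma 4.19]{iliev2011fano}), and the categorical one coming from $T$.

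\textbf{Key steps.} First I would recall, following \cite[Lemma 4.19]{iliev2011fano}, that for a general $\tau$-conic $C$ contained in the surface $\Sigma=\mathrm{Gr}(2,V_4)\cap X$, the involution $\eta$ sends $f(C)$ to $f(C')$ where $C'\subset\Sigma$ is the residual conic: $\langle C,C'\rangle\cap\mathrm{Gr}(2,V_4)\cap Q$ is a degenerate elliptic curve $e=C\cup C'$, so that $C\cup C'\in|-K_\Sigma|$, i.e. $C'\sim -C-K_\Sigma$ as divisors on $\Sigma$. Next, by Lemma~\ref{T-act-tau}, the categorically-defined involution produces $T(\pr_1(\oh_C(H)))\cong\pr_1(\oh_{C'}(H))$ precisely for a $\tau$-conic $C'$ with $C'=-C-K_\Sigma$ in $\Pic(\Sigma)$ — and the lemma already verifies $C'$ is again of $\tau$-type. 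Since $\eta'$ is, by definition, the involution on $\mathcal{M}_\sigma(\Ku(X),\Lambda_1)$ induced by the auto-equivalence $T$ (it is an involution because $T\circ T\cong\mathrm{id}$ by Proposition~\ref{involution_functor_original}), we get $\eta'\circ p(C)=p(C')$ for this same $C'$. Thus for the general $\tau$-conic $C$, the two conics produced by $\eta$ and by $\eta'$ lie in the same linear system $|-C-K_\Sigma|$ on $\Sigma$; one then checks that for a general such $C$ this linear system is a single point (or more precisely that $f$, hence $i\circ f=p$, contracts $|{-C-K_\Sigma}|$ to one point of $\widetilde{Y}_{A^\perp}$ in exactly the way it contracts $|C|$, by Proposition~\ref{IM11contraction} and Proposition~\ref{contracts_tau_conic_by_P1}), so $\eta\circ f(C)=f(C')=i^{-1}\circ\eta'\circ p(C)$, i.e. $i\circ\eta\circ f=\eta'\circ p=\eta'\circ i\circ f$. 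Since $f$ is dominant, $i\circ\eta=\eta'\circ i$, which is the claim.

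\textbf{Main obstacle.} The delicate point is matching the two descriptions of $C'$ and then promoting the equality on a dense open to an equality of the two global involutions. Concretely, I expect the crux to be the compatibility between "residual conic in the degenerate elliptic curve $C\cup C'$ cut out by $\langle C,C'\rangle$" (Iliev--Manivel's $\eta$) and "the divisor class $-C-K_\Sigma$ on $\Sigma$" (the output of $T$ via Lemma~\ref{T-act-tau}): one must verify these genuinely give the same point of $\widetilde{Y}_{A^\perp}$, using that $f$ is constant on $|{-C-K_\Sigma}|$. Once that identification is in hand, the rest is formal: $\eta$ and $\eta'$ are both morphisms between the separated varieties $\widetilde{Y}_{A^\perp}$ and $\mathcal{M}_\sigma(\Ku(X),\Lambda_1)$, they agree after precomposition with the dominant morphism $f$ on the dense set of general $\tau$-conics, hence $i\circ\eta\circ i^{-1}=\eta'$ on a dense open and therefore everywhere. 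I would also note the consistency check already recorded in Remark~\ref{involution_switch_rho_and_sigma_conic}: $T$ swaps the two exceptional points $p(F_g^\rho(X))$ and $p(F_g^\sigma(X))$, exactly as the double-cover involution $\eta$ swaps the two preimages $p_1,p_2$ of the Plücker point, so the identification of $\eta$ with $\eta'$ is also compatible on the contracted loci.
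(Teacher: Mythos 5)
Your proposal is correct and follows essentially the same route as the paper: both arguments use Lemma~\ref{T-act-tau} to identify the conic $C'$ produced by $T$ with the divisor class $-C-K_\Sigma$ on $\Sigma$, match this with Iliev--Manivel's residual-conic description of $\eta$ via the degenerate degree-$4$ elliptic curve $C\cup C'$, and conclude by density of the $\tau$-conic locus. Your additional care about $f$ being constant on the linear system $|{-C-K_\Sigma}|$ and the consistency check on the contracted $\rho$- and $\sigma$-loci are sensible refinements of the same argument, not a different one.
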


\begin{proof}
By Lemma \ref{T-act-tau}, for a general conic $C$, if $\eta'\circ p(C)=p(C')$ for another conic $C'$, $C$ and $C'$ lie in a same surface $\Sigma$ of degree four. In particular, $C\cup C'$ is an elliptic curve of degree 4, which spans a 3-plane in $\mathbb{P}(\wedge^2V_4)\cap H$, cutting along $X$ by~$C\cup C'$. This coincides with the choice of $C'$ in \cite{iliev2011fano}, which means for a general conic~$C$, we can always find another $C'$ such that $\eta\circ f(C)=f(C')$ and $\eta'\circ p(C)=p(C')$. In conclusion, the two involutions essentially are the same.

\end{proof}

\begin{remark}
There is an easier proof of Proposition~\ref{involution_induced_auto} using the fact that for a very general GM fourfold $X$, the automorphism group of $\widetilde{Y}_{A^\perp}$ is isomorphic to $\mathbb{Z}_2$, generated by the natural involution $\eta$. Thus to show $\eta$ coincides with $\eta'$, it suffices to prove $\eta'$ is non-trivial, which is obvious. However, our method in Proposition~\ref{involution_induced_auto} is independent of this fact. Thus Proposition~\ref{involution_induced_auto} will still be true in general once we identify $\widetilde{Y}_{A^{\perp}}$ with $\mathcal{M}_{\sigma}(\Ku(X), \Lambda_1)$ for a general ordinary GM fourfold $X$.
\end{remark}

\section{Birational categorical Torelli for Gushel--Mukai fourfolds}\label{KP_conjecture_GM}

\subsection{A universal family}

Let $X$ be a very general ordinary GM fourfold. Let $F^{\tau}_g(X)$ be the locus of $\tau$-conics and $M^X_0:=p(F^{\tau}_g(X))\subset \cM_\sigma(\mathcal{K}u(X), \Lambda_1)$. In this subsection, we construct a universal family on $M^X_0$. By \cite[Theorem 5.8]{kuz11}, we have a semiorthogonal decomposition of the form
\[\D^b(X\times F_g(X))=\langle \oh_X(-H)\boxtimes \D^b(F_g(X)), \cU\boxtimes \D^b(F_g(X)), \Ku(X\times F_g(X)), \]
\[\oh_X\boxtimes \D^b(F_g(X)), \cU^{\vee}\boxtimes \D^b(F_g(X)) \rangle.\]
Let $\pr_3:=\bR_{\cU\boxtimes \D^b(F_g(X))}\bR_{\oh_X(-H)\boxtimes \D^b(F_g(X))}\bL_{\oh_X\boxtimes \D^b(F_g(X))}\bL_{\cU^{\vee}\boxtimes \D^b(F_g(X))}$ be the relative projection functor. Let $\cI\in \Coh(X\times F_g(X))$ be the universal ideal sheaf of conics. We define \[\tilde{\cI}:=(\mathrm{id}\times p)_*\pr_3(\cI)\in \D^b(X\times \cM_\sigma(\mathcal{K}u(X), \Lambda_1))\]
and $\tilde{\cJ}$ be the restriction of $\tilde{\cI}$ to $X\times M^X_0$.

\begin{lemma} \label{uni_fam}
$\tilde{\cJ}$ is a universal family on $M^X_0$.
\end{lemma}

\begin{proof}
Let $[C]\in F_g^{\tau}(X)$ and we denote $[A_C]:=[\pr_2(I_C)]\in M^X_0\subset \cM_\sigma(\mathcal{K}u(X), \Lambda_1)$. We have a commutative diagram with all squares cartesian:
\[\begin{tikzcd}
	{X\times \mathbb{P}^1} & {X\times F^{\tau}_g(X)} & {X\times F_g(X)} \\
	{X\times \{[A_C]\}} & {X\times M^X_0} & {X\times \cM_\sigma(\mathcal{K}u(X), \Lambda_1)}
	\arrow["{q_1}", from=1-1, to=2-1]
	\arrow["{i_{[A_C]}}", hook, from=1-1, to=1-2]
	\arrow["{j_{[A_C]}}"', hook, from=2-1, to=2-2]
	\arrow["{\mathrm{id}\times p'}", from=1-2, to=2-2]
	\arrow["{l_2}", hook, from=1-2, to=1-3]
	\arrow["{l_1}"', hook, from=2-2, to=2-3]
	\arrow["{\mathrm{id}\times p}", from=1-3, to=2-3]
\end{tikzcd}\]
where $q_1$ is the projection to the first component, and rows are natural embeddings. Note that $p'$ is the restriction of $p$ to $F^{\tau}_g(X)$, which is a $\mathbb{P}^1$-fibration.
 
Now we have
\[j^*_{[A_C]}\tilde{\cJ}=j^*_{[A_C]}l_1^*\tilde{\cI}=j^*_{[A_C]}l_1^*(\mathrm{id}\times p)_*\pr_3(\cI)\]
\[\cong j^*_{[A_C]}(\mathrm{id}\times p')_*l^*_2\pr_3(\cI)\cong (q_1)_*i^*_{[A_C]}l_2^*\pr_3(\cI),\]
where the last two isomorphisms follow from the  base change theorem. For any $[C']\in \mathbb{P}^1\subset F_g(X)$, we denote $l_{[C']}:X\times \{[C']\}\hookrightarrow X\times \mathbb{P}^1$.
Note that since $\pr_3$ commute with base change, we have $l_{[C']}^*i^*_{[A_C]}l_2^*\pr_3(\cI)\cong \pr_2(I_{C'})= A_{[C']}\cong A_{[C]}$
for any $[C']\in \mathbb{P}^1$. Then from the construction of relative projection functor $\pr_3$, we have $i^*_{[A_C]}l_2^*\pr_3(\cI)\cong q_1^*A_C$.
Thus we obtain
\[j^*_{[A_C]}\tilde{\cJ}\cong (q_1)_*i^*_{[A_C]}l_2^*\pr_3(\cI)\cong  (q_1)_*q_1^*A_C\cong A_C,\]
where the last isomorphism follows from the projection formula and  $(q_1)_*\oh_{X\times \mathbb{P}^1}\cong \oh_X$. Since the above argument holds for every $\tau$-conic $C$, this shows that $\tilde{\cJ}$ is a universal family on $M^X_0$.
\end{proof}

Consider the period map of GM fourfolds, 
$$\wp_{4}: \mathbf{M}_{4}^{\mathrm{GM}}\longrightarrow\mathscr{D}.$$ 
It is known that the fibers of $\wp_4$ are of dimension four. On the other hand, it is shown in \cite{kuznetsov2019categorical} that GM fourfolds have the equivalent Kuznetsov components if they are in the same fiber of the period map. It is natural to ask if $\Ku(X)$ determines the birational isomorphism class of $X$. In fact, Kuznetsov--Perry propose the following conjecture in \cite[Conjecture 1.9]{kuznetsov2019categorical}.

\begin{conjecture}
\label{K_P_conjecture}
Let $X$ and $X'$ be GM varieties of the same dimension such that there is an equivalence $\Ku(X)\simeq\Ku(X')$, then $X$ is birational to $X'$. 
\end{conjecture}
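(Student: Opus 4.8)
The plan is to deduce Conjecture~\ref{K_P_conjecture} from the moduli-space identifications established above, in all the dimensions where we have tools available: GM fivefolds and sixfolds are rational, so any two of the same dimension are birational and there is nothing to prove; for GM threefolds the statement is \cite[Theorem~10.1]{JLLZ2021gushelmukai}; the remaining case, and the content of Theorem~\ref{main_theorem_2}, is that of GM fourfolds, which I would treat for very general $X$ and $X'$. Given an equivalence $\Phi\colon\Ku(X)\simeq\Ku(X')$, which we may assume is of Fourier--Mukai type, it suffices to show that $X$ and $X'$ are period partners or period duals: in either case $X$ is birational to $X'$ by \cite[Corollary~4.16, Theorem~4.20]{debarre2015gushel} together with \cite[Remark~5.28]{debarre2019gushel}.

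First I would analyse the induced isometry $[\Phi]\colon\mathcal{N}(\Ku(X))\xrightarrow{\sim}\mathcal{N}(\Ku(X'))$. As a corollary of \cite[Theorem~5.12]{bayer2022kuznetsov}, $[\Phi]$ carries the canonical rank-two sublattice $\langle\Lambda_1,\Lambda_2\rangle\cong A_1^{\oplus2}$ isomorphically onto $\langle\Lambda_1',\Lambda_2'\rangle$; hence $[\Phi](\Lambda_2)\in\{\pm\Lambda_1',\pm\Lambda_2'\}$, and after composing $\Phi$ with the shift $[1]$ (which acts by $-1$ on $\mathcal{N}$) we may assume $[\Phi](\Lambda_2)=\Lambda_1'$ or $\Lambda_2'$. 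Since $\Phi$ is of Fourier--Mukai type and $\sigma$ is generic, $\Phi$ takes $\sigma$-stable objects to objects stable for a generic stability condition $\sigma'$ on $\Ku(X')$, and so induces an isomorphism of Bridgeland moduli spaces
\[
\mathcal{M}_{\sigma}(\Ku(X),\Lambda_2)\;\xrightarrow{\ \sim\ }\;\mathcal{M}_{\sigma'}\bigl(\Ku(X'),[\Phi](\Lambda_2)\bigr).
\]
By Theorem~\ref{main_theorem_1}(1)--(2) the left-hand side is $\widetilde{Y}_A$, while the right-hand side is $\widetilde{Y}_{A'}$ if $[\Phi](\Lambda_2)=\Lambda_2'$ and $\widetilde{Y}_{A'^{\perp}}$ if $[\Phi](\Lambda_2)=\Lambda_1'$.

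It then remains to pass from an abstract isomorphism of double (dual) EPW sextics back to the Lagrangian data. For very general $X$ the hyperkähler fourfold $\widetilde{Y}_A$ has Picard rank one, so its polarization, its anti-symplectic covering involution, and hence the underlying EPW sextic $Y_A\subset\PP(V_6)$ are all intrinsic; by O'Grady's work together with \cite{debarre2019gushel} the sextic $Y_A$ determines $A$ up to the action of $\mathrm{PGL}(V_6)$. Therefore $\widetilde{Y}_A\cong\widetilde{Y}_{A'}$ forces an isomorphism $\phi\colon V_6(X)\cong V_6(X')$ with $(\wedge^3\phi)(A)=A'$, so that $X$ and $X'$ are period partners, whereas $\widetilde{Y}_A\cong\widetilde{Y}_{A'^{\perp}}$ forces $A$ to be $\mathrm{PGL}$-equivalent to $A'^{\perp}$, so that $X$ and $X'$ are period duals (these two alternatives are genuinely distinct by Theorem~\ref{ogrady involution}, but both conclude the argument). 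Combined with the first paragraph, this proves the conjecture for very general GM fourfolds.

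\textbf{The main obstacle.} The substantive work is hidden in Theorem~\ref{main_theorem_1}, and specifically in Theorem~\ref{pr_stable_thm} --- the $\sigma$-stability of the projection objects $\mathrm{pr}_2(I_C)$ of conics --- which is the only place the very general (general non-Hodge-special) hypothesis is genuinely used, via the numerical criterion of Proposition~\ref{stability_object_ext1_small}. Upgrading the theorem to all general GM fourfolds requires proving that stability for the full family, which is the announced work in progress. A second, more formal, point is the last step above: one must know that the isomorphism produced by $\Phi$ respects the canonical polarization of the double EPW sextic, and this is automatic here precisely because $\rho(\widetilde{Y}_A)=1$ --- once more a consequence of the very general assumption.
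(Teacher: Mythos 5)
Your proposal is correct in substance and follows the paper's strategy up to the isomorphism of Bridgeland moduli spaces: both arguments use the fact that $[\Phi]$ preserves the canonical $A_1^{\oplus 2}$-lattice, so that $\Phi$ identifies $\mathcal{M}_{\sigma}(\Ku(X),\Lambda_i)$ with $\mathcal{M}_{\sigma'}(\Ku(X'),\Lambda_j')$ for some $i,j$, and then invoke Theorem~\ref{main_theorem_1}. (The paper justifies that the induced bijection of moduli spaces is an isomorphism via the argument of \cite[Section 5]{bernardara2012categorical}; you assert it, which is a standard but nontrivial point.) Where you genuinely diverge is the final step. The paper stays Hodge-theoretic: from the isomorphism of hyperk\"ahler fourfolds it extracts a Hodge isometry $\langle\Lambda_1,\Lambda_2\rangle^{\perp}\cong\langle\Lambda_1',\Lambda_2'\rangle^{\perp}$, translates this via \cite[Proposition 4.14]{perry2019stability} into $H^4(X,\mathbb{Z})_0(1)\cong H^4(X',\mathbb{Z})_0(1)$, and concludes period partnership/duality from \cite[Remark 5.28]{debarre2019gushel}. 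You instead reconstruct the Lagrangian data geometrically: Picard rank one makes the polarization (and hence the $2{:}1$ map to the EPW sextic in $\mathbb{P}(V_6)$) intrinsic, and then you appeal to O'Grady's result that $Y_A$ determines $A$ to upgrade $\widetilde{Y}_A\cong\widetilde{Y}_{A'}$ (resp.\ $\widetilde{Y}_{A'^{\perp}}$) to $(\wedge^3\phi)(A)=A'$ (resp.\ $=A'^{\perp}$). Your route requires the extra external input that an EPW sextic without decomposable vectors determines its Lagrangian, which the paper never invokes, and it leans more heavily on the very-general hypothesis (through $\rho(\widetilde{Y}_A)=1$); the paper's route is softer and only needs the lattice-theoretic Torelli statements already in the literature. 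Note also that you use part (2) of Theorem~\ref{main_theorem_1}, whose own proof in the paper already passes through the period dual, whereas the paper's proof of the conjecture only needs the conic-theoretic identification $\mathcal{M}_{\sigma}(\Ku(X),\Lambda_1)\cong\widetilde{Y}_{A^{\perp}}$; this is harmless but makes your argument slightly less economical. Both approaches are valid.
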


In \cite[Theorem 1.5]{JLLZ2021gushelmukai}, we prove the conjecture for general GM threefolds. In the current article, we prove this conjecture for very general GM fourfolds.

\begin{theorem}
\label{KP_conjecture}
Let $X$ and $X'$ be very general ordinary GM fourfolds with equivalent Kuznetsov components $\Ku(X)\simeq\Ku(X')$. Then $X$ is either the period partner or the period dual of $X'$. In particular, $X$ is birational to $X'$.  
\end{theorem}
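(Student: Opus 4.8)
The plan is to follow the strategy used for general GM threefolds in \cite[Theorem~10.1]{JLLZ2021gushelmukai}, transporting the equivalence through the Bridgeland moduli spaces identified in Theorems~\ref{first_moduli_space} and~\ref{second_moduli_space}. Arguing as there, I would first reduce to the case where $\Phi\colon\Ku(X)\simeq\Ku(X')$ is of Fourier--Mukai type, so that it acts on numerical Grothendieck groups and carries families of stable objects to families of stable objects. By \cite[Theorem 5.12]{bayer2022kuznetsov}, the induced isometry $[\Phi]\colon\mathcal{N}(\Ku(X))\cong\mathcal{N}(\Ku(X'))$ sends the canonical $A_1^{\oplus 2}$-sublattice $\langle\Lambda_1,\Lambda_2\rangle$ onto $\langle\Lambda_1',\Lambda_2'\rangle$; since every isometry of $A_1^{\oplus 2}$ merely permutes $\pm\Lambda_1',\pm\Lambda_2'$, after composing with a suitable shift I may assume $[\Phi](\Lambda_2)=\Lambda_1'$ or $[\Phi](\Lambda_2)=\Lambda_2'$ (the overall sign being irrelevant for the moduli functors). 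For a generic $\sigma$ on $\Ku(X)$, the stability condition $\sigma'=\Phi(\sigma)$ is again generic, and $\Phi$ takes $\sigma$-stable objects of class $\Lambda_2$ to $\sigma'$-stable objects of class $[\Phi](\Lambda_2)$, inducing an isomorphism $\mathcal{M}_\sigma(\Ku(X),\Lambda_2)\cong\mathcal{M}_{\sigma'}(\Ku(X'),\Lambda_1')$ or $\mathcal{M}_{\sigma'}(\Ku(X'),\Lambda_2')$.

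Next I would feed this into the paper's main result. Applying Theorem~\ref{first_moduli_space} and Theorem~\ref{second_moduli_space} to both $X$ and $X'$, the isomorphism above becomes an isomorphism of hyperkähler fourfolds: $\widetilde{Y}_A\cong\widetilde{Y}_{(A')^\perp}$ when $[\Phi](\Lambda_2)=\Lambda_1'$, and $\widetilde{Y}_A\cong\widetilde{Y}_{A'}$ when $[\Phi](\Lambda_2)=\Lambda_2'$. It then remains to descend such an isomorphism to the Lagrangian data. Because $X$ is very general, $A$ is general in $\mathbb{L}\mathbb{G}(\wedge^3 V_6)^{00}$, so $\widetilde{Y}_A$ has Picard number one, generated by the natural polarization coming from the double cover $\widetilde{Y}_A\to Y_A\subset\mathbf{P}(V_6)$; hence any isomorphism as above is automatically polarized. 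Invoking the Torelli-type results of O'Grady for double EPW sextics (\cite{o2006dual,o2010double}, together with Theorem~\ref{ogrady involution}), a polarized isomorphism between double EPW sextics with general Lagrangian data forces that data to agree up to $\mathrm{PGL}$: in the former case there is $\phi\colon V_6\cong (V_6')^\vee$ with $(\wedge^3\phi)(A)=(A')^\perp$, and in the latter a $\phi\colon V_6\cong V_6'$ with $(\wedge^3\phi)(A)=A'$. By the definitions recalled in Section~\ref{geometry_GM_EPW}, $X$ is then the period dual, respectively the period partner, of $X'$.

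Finally, the birational statement is known input: period partners of GM fourfolds are birational by \cite[Corollary 4.16, Theorem 4.20]{debarre2015gushel}, and period duals are birational by \cite[Remark 5.28]{debarre2019gushel}; in either case $X$ is birational to $X'$.

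I expect the main obstacle to be the descent in the second step, i.e.\ recovering the Lagrangian datum (up to $\mathrm{PGL}(V_6)$ and the $\perp$-involution) from the abstract polarized hyperkähler fourfold $\widetilde{Y}_A$. This rests on two facts that genuinely use the very general hypothesis and not merely that $X$ is non-Hodge-special: that $\mathrm{Pic}(\widetilde{Y}_A)$ is cyclic, so that the isomorphism produced in the second step respects polarizations without further argument, and that the period map of double EPW sextics is injective modulo $\mathrm{PGL}(V_6)$ over the locus of general $A$. Everything else is a formal transfer of structure along Fourier--Mukai functors together with the birationality results of Debarre--Kuznetsov.
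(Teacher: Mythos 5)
Your proposal is correct in outline, and its first half coincides with the paper's: transport the canonical $A_1^{\oplus 2}$-lattice along $[\Phi]$ using \cite[Theorem 5.12]{bayer2022kuznetsov}, obtain an isomorphism of Bridgeland moduli spaces, and identify both sides with double (dual) EPW sextics via Theorems~\ref{first_moduli_space} and~\ref{second_moduli_space} (the paper works with $\Lambda_1$ rather than $\Lambda_2$, which is immaterial). Where you genuinely diverge is the descent from the isomorphism of hyperk\"ahler fourfolds to the statement about Lagrangian data. The paper never recovers $A$ geometrically: it passes to $H^2$ of the isomorphic moduli spaces to get a Hodge isometry $\langle\Lambda_1,\Lambda_2\rangle^{\perp}\cong\langle\Lambda_1',\Lambda_2'\rangle^{\perp}$, converts this via \cite[Proposition 4.14]{perry2019stability} into a Hodge isometry $H^4(X,\mathbb{Z})_0(1)\cong H^4(X',\mathbb{Z})_0(1)$, and then quotes \cite[Remark 5.28]{debarre2019gushel}, which characterizes period partners/duals exactly by such an isometry. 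You instead argue that the isomorphism $\widetilde{Y}_A\cong\widetilde{Y}_{A'}$ (or $\widetilde{Y}_{(A')^{\perp}}$) is automatically polarized because $\mathrm{Pic}(\widetilde{Y}_A)$ has rank one, and then recover the Lagrangian up to $\mathrm{PGL}(V_6)$ by a Torelli-type statement. That route does work, but the input you need is not Theorem~\ref{ogrady involution} (which concerns period points and the involution $\bar r$, not the recovery of $A$); it is O'Grady's injectivity of $A\mapsto Y_A$ on the locus of Lagrangians without decomposable vectors, combined with the fact that the degree-two polarization on $\widetilde{Y}_A$ induces precisely the double cover $\widetilde{Y}_A\to Y_A\subset\mathbf{P}(V_6)$ — cite these precisely, since they carry the whole weight of your argument. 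Two further points of comparison: both routes must upgrade the set-theoretic bijection of moduli spaces to an isomorphism of schemes (the paper does this by the argument of \cite[Section 5]{bernardara2012categorical}, using that the moduli space is dominated by $F_g(X)$ via a Fourier--Mukai functor; "reducing to the Fourier--Mukai case" needs the same justification); and the paper's Hodge-theoretic route uses only that $X$ is non-Hodge-special, whereas your polarized-Torelli route leans on Picard rank one of $\widetilde{Y}_A$ and hence more heavily on very generality, as you yourself note.
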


\begin{proof}
Let $\Phi$ denote the equivalence $\Ku(X)\simeq\Ku(X')$, then $\Phi$ induces an isometry between numerical Grothendieck groups. Since $X$ and $X'$ are non-Hodge-special, the isometry will map the canonical rank two lattice  $\langle\Lambda_1,\Lambda_2\rangle $ to $\langle\Lambda_1', \Lambda_2'\rangle$. Thus $\Phi$ induces a bijection~$\phi$ or~$\phi'$ between Bridgeland moduli spaces
\[
\begin{tikzcd}
{\cM_\sigma(\mathcal{K}u(X), \Lambda_1)} \arrow[r, "\phi"] \arrow[rd, "\phi'"] & {\cM_{\sigma'}(\mathcal{K}u(X'),\Lambda_2')}  \\
                                                                  & {\cM_{\sigma'}(\mathcal{K}u(X'), \Lambda_1')}
\end{tikzcd}
\]

By Lemma \ref{uni_fam}, the open  subscheme $M^X_0$ of $\cM_\sigma(\mathcal{K}u(X), \Lambda_1)$ admits a universal family. Then according to the standard argument in \cite[Section 5]{bernardara2012categorical}, $\phi$ or~$\phi'$ is a morphism when restricted to $M^X_0$, hence is actually a birational isomorphism. Since $X$ and $X'$ are very general, by Verbitsky's Torelli theorem \cite{verbitsky}, we obtain that $\phi$ or~$\phi'$ is actually an isomorphism.

In either case, after taking the primitive cohomology on both sides, we have a Hodge isometry $$\langle\Lambda_1,\Lambda_2\rangle^{\perp}\cong\langle\Lambda_1',\Lambda_2'\rangle^{\perp}.$$

Using the result in \cite[Proposition 4.14]{perry2019stability}, this is equivalent to an isomorphism between weight two Hodge structure of $X$ and $X'$
$$H^4(X,\mathbb{Z})_0(1)\cong H^4(X',\mathbb{Z})_0(1).$$
Thus we deduce that $X$ and $X'$ are period partners or period duals (cf.~\cite[Remark 5.28]{debarre2019gushel}). In particular, $X$ is birational to $X'$ by \cite[Corollary 4.16, Theorem 4.20]{debarre2015gushel}.
\end{proof}

\begin{remark}
    Note that we do not assume the equivalence $\Phi\colon \Ku(X)\to \Ku(X')$ to be Fourier--Mukai type, thus we need the existence of the universal family to apply the standard argument in \cite[Section 5]{bernardara2012categorical}.
\end{remark}

\begin{remark}
As a corollary of Theorem~\ref{KP_conjecture}, for very general ordinary GM fourfolds $X$ and $X'$, they are period partners or duals if and only if there exists an equivalence $\Phi:\Ku(X)\simeq \Ku(X')$. In a forthcoming preprint \cite{LPZ22}, the authors prove that any equivalence $\Phi$ between the Kuznetsov components of GM fourfolds is of Fourier--Mukai type. Thus $\Phi$ induces a Hodge isometry between the numerical Grothendieck groups. Using this property, combined with the result in \cite{bayer2022kuznetsov}, 
there is a more general version of Theorem~\ref{KP_conjecture}. More precisely, using \cite[Theorem 5.12]{bayer2022kuznetsov} and the same argument in \cite[Remark 6.16]{PS2022categorical}, one can prove that any two GM fourfolds $X$ and $X'$ are period partners or duals if and only if there exists an equivalence $\Phi:\Ku(X)\simeq\Ku(X')$ such that the induced Hodge isometry $[\Phi]: \cN(\Ku(X))\to \cN(\Ku(X'))$ maps $\langle \Lambda_1, \Lambda_2\rangle$ to~$\langle \Lambda_1', \Lambda_2'\rangle$.

\end{remark}

\bibliography{ref}

\bibliographystyle{alpha}

\end{document}